\pgfplotsset{my style/.append style={axis x line=middle, axis y line=
middle, xlabel={$x$}, ylabel={$y$}, axis equal }}
\theoremstyle{definition}
\newtheorem{definition}{Definition}[section]
\theoremstyle{plain}
\newtheorem{theorem}{Theorem}[section]
\newtheorem{lemma}{Lemma}[section]
\newtheorem{corollary}{Corollary}[section]
\DeclareMathOperator\Log{Log}
\newcommand{\underscore}{\underline{\hspace{2mm}}}
\newcommand{\Z}{\mathbb{Z}}
\newcommand{\C}{\mathbb{C}}
\newcommand{\R}{\mathbb{R}}
\newcommand{\HH}{\mathbb{H}}
\newcommand{\Aut}{\text{Aut}}
\newcommand{\SL}{\text{SL}}
\newcommand{\GL}{\text{GL}}
\newcommand{\SO}{\text{SO}}
\newcommand{\PGL}{\text{PGL}}
\newcommand{\PSL}{\text{PSL}}
\newcommand{\PSO}{\text{PSO}}
\newcommand{\Hom}{\text{Hom}}
\newcommand{\RP}{\mathbb{R}P}
\newcommand{\slf}{\mathfrak{sl}}
\newcommand{\so}{\mathfrak{so}}
\newcommand{\pgl}{\mathfrak{pgl}}
\newcommand{\g}{\mathfrak{g}}
\newcommand{\vv}{\mathfrak{v}}
\newcommand{\ad}{\text{ad}}
\newcommand{\Ad}{\text{Ad }}
\newcommand{\vbf}[1]{\textbf{#1}}
\newcommand{\lra}{\longrightarrow}
\newcommand{\wt}{\widetilde}
\newcommand{\tr}{\text{tr }}
\newcommand{\id}{\text{id}}
\newcommand{\im}{\text{im}}
\newcommand{\VD}{V^{*}}
\newcommand{\ZP}{\mathbb{Z}[\pi]}
\newcommand{\del}{\partial}
\newcommand{\res}{\text{res}}
\newcommand{\de}{\!:}
\title{Computer Assisted Projective Rigidity}
\author{Charles Daly}
\date{
	\today
}
\begin{document}
	\maketitle
	
	\begin{abstract}
		In this paper we provide a computer assisted proof that about two thousand surgeries far away from the ideal point in the hyperbolic Dehn filling space of the figure-eight knot complement are infinitesimally projectively rigid.  We also prove that for projective deformations of the figure-eight knot complement sufficiently close to the complete hyperbolic structure, the induced map on the first cohomology of the longitude of the boundary torus is non-zero.  This paper provides a complementary piece to the results of Heusener and Porti who showed that for each $k \in \Z$, there is a sufficiently large $N_{k}$ for which every $k/n$-Dehn filling on the figure-eight knot complement for $n \geq N_{k}$ is infinitesimally projectively rigid.  In the process of the proof, we provide explicit representations of the figure-eight knot complement in $PSO(3,1)$ which are rational in the real and imaginary parts of the shapes of the ideal tetrahedra used to glue the knot complement together.
		\noindent\textbf{Keywords:} Figure-eight, projective, hyperbolic, rigidity, interval arithmetic 
	\end{abstract}

\begin{section}{Overview of the Paper}\label{secoverview}
It is a well known fact that the figure-eight knot complement, denoted here by $M$, supports a unique complete hyperbolic structure.  Using the thick-thin decomposition, one may cut the cusp of $M$ and is left with a compact manifold with toroidal boundary whose interior supports a hyperbolic structure.  Labeling a meridian and longitude of the toroidal boundary, one can glue in solid torus wrapping $p$-times around the meridian and $q$-times around the longitude to yield a \emph{closed} manifold.  What is remarkable is that for all but finitely many choices of relatively prime $(p,q)$, the resulting closed manifold inherits a hyperbolic structure.  This process is known as Thurston's $\emph{hyperbolic Dehn filling}$ and has been well-studied by many authors, \cite[Theorem 81.]{Agol2010Bounds}, \cite[Theorem 1.1]{Lackenby2012maximal}, \cite[Chapter 4]{Thurston2022Geometry}.  \\
\\
A recurring and very active question in the field of geometry is the following: given some geometric structure on a manifold, can one deform the structure to produce a new distinct geometry on the manifold?  Some examples include deforming the Euclidean structure on a flat torus or deforming hyperbolic structures on a genus $g$-closed orientable surface.  In the setting of closed orientable hyperbolic 3-manifolds, Mostow Rigidity says that no such non-trivial deformation exists.  This paper's primary concern is to provide evidence a similar phenomena occurs in the projective setting on manifolds that arise as Dehn surgeries on the figure-eight knot complement.  \\
\\
Consider a compact orientable hyperbolic 3-manifold $M$ with toroidal boundary.  Let $M_{p,q}$ denote the resulting manifold under $(p,q)$-Dehn surgery.  By Thurston's Hyperbolic Dehn filling Theorem \cite[Appendix B]{BoileauHeusenerPorti}, one obtains a discrete and faithful representation $\pi_{1}(M_{p,q}) \lra \PSL(2,\C) \simeq \PSO(3,1) \subset \PGL(4,\R)$, and thus the manifold inherits a natural projective structure.  Because of Mostow Rigidity, there is no non-trivial deformation of the hyperbolic structure inside the group $\PSO(3,1)$; however Mostow Rigidity is silent about deforming the projective structure in the larger group $\PGL(4,\R)$.  Cooper, Long, Thistlethwaite \cite{Cooper2006Computing} surveyed about four and a half thousand 2 generator hyperbolic 3-manifolds from the Hodgson-Weeks census and showed via a computer assisted proof that at most 61 of them admit projective deformations of the hyperbolic structure.  \\
\\
The work of Heusener and Porti \cite[Proposition 1.7]{Heusener2011Infinitesimal} shows that for each $k \in \Z$ one may find a sufficiently large $N_{k} > 0$ so that for all $n \geq N_{k}$, the $(k,n)$-Dehn filling on the figure-eight knot complement does not admit projective deformations.  However, in the same paper they did construct an \emph{orbifold} corresponding to $(n,0)$-Dehn surgery on the figure-eight knot complement $M$ that is \emph{not} projectively rigid and whose deformation space is a curve.  Passing to a cyclic cover, they obtain a \emph{manifold} that admits projective deformations; these are the so-called Fibonacci manifolds.  \\
\\
It appears that the general literature suggests that having non-trivial projective deformations of the hyperbolic structure on a closed manifold is quite rare.  Part of this paper is to add to this consensus regarding the projective rigidity of the figure-eight knot complement and provide conjectural evidence that in fact, \emph{all} such surgeries that result in an honest manifold on the figure-eight knot complement are projective rigid. \\
\\
The paper is organized as follows.  Section \ref{secprelim} is dedicated to reviewing some preliminaries about hyperbolic Dehn filling, the deformation space of a compact orientable hyperbolic 3-manifold with boundary, the Thurston-slice, and infinitesimal projective rigidity.  In addition this section reviews the definition of singular homology and cohomology with \emph{twisted coefficient} modules.  In this section, many of the standard constructions of ordinary cohomology and homology, e.g. Poincar\'{e} duality and the Kronecker pairing, are provided in the context of twisted coefficients.  Basic constructions of \emph{group cohomology} are also provided, though almost exclusively in dimensions zero and one.  \\
\\
In Section \ref{secPSOREP}, with the aid of Mathematica \cite{Mathematica}, we construct explicit representations of the figure-eight knot complement into $\PSO(3,1)$ that deform the complete hyperbolic structure.  This construction is done using an isometry between the upper half space model and the Klein model of hyperbolic 3-space.  These representations are novel insofar as they are rational functions of the real and imaginary parts of the shapes from the two ideal tetrahedra used glue the figure-eight knot complement together.  Here we also provide some illustrations of the deformation of the figure-eight knot complement constructed by Ballas in \cite[Section 5.1]{Ballas2014Deformations}.  We tile the Klein model by ideal tetrahedra and intersect the tiling with totally geodesic surfaces to illustrate the tiling and deformation.  \\
\\
Section \ref{secchmfig8} provides some minor strengthening of some theoretical results from Heusener and Porti \cite[Remark 1.8]{Heusener2011Infinitesimal} regarding the infinitesimal projective rigidity of the figure-eight complement.  Specifically, we use the fact that the figure-eight knot complement is a once-punctured torus bundle over the circle to rewrite the first cohomology of $M$ as the fixed point set of the monodromy map induced by the base circle acting on the first cohomology of the fiber.  Using this structure we provide a new proof, Theorem \ref{Fcohom}, that the figure-eight knot complement is infinitesimally projectively rigid.  We leverage this result to show in Corollary \ref{deforml} that for representations sufficiently close to the complete one, that the inclusion map from $\pi_{1}(l) \lra \pi_{1}(M)$ where $l$ is a longitude of boundary torus $\del M$, induces an injection on the level of cohomology, which is to say, if one deforms the projective structure of the figure-eight knot complement on $M$, one must also deform the projective structure on the longitude of the boundary torus.    
\\
\\
The latter half of Section \ref{secchmfig8} is dedicated to proving Theorem \ref{thmrigid} which provides a way of certifying that the $(p,q)$-Dehn filling of the figure-eight knot complement is infinitesimally projectively rigid.  This is done by constructing a normal form of a cohomology representative in the first cohomology of $M$ as in Section \ref{ssssc}.  Here we also introduce the notion of the \emph{slope} of a representation corresponding to the projective structure on the figure-eight knot complement.  Using the normal form of the cohomology representative and some hypotheses regarding the slope of the representation, we provide computationally verifiable sufficient conditions for Theorem \ref{thmrigid}.  \\
\\
Section \ref{ia} provides by a brief summary of \emph{interval arithmetic} and an explanation of how to use the interval arithmetic package INTLAB by Rump \cite{Ru99a} in Matlab \cite{MATLAB} combined with SnapPy \cite{SnapPy} by Culler, Dunfield, Goemer, and Weeks to certify that a particular $(p,q)$-Dehn filling is projectively rigid.  We then walk through the calculations with an example in the case of the $(2,3)$-Dehn surgery.  \\
\\
This paper makes frequent reference to the Mathematica file \texttt{PRFE\underscore Mathematica.nb} and is best read with it as an accompaniment to verify long algebraic calculations.  This file is accessible for download \cite{Daly}.  
\end{section}

\section*{Acknowledgements}
I wish to thank many folks for their contribution to this work.  Specifically, I want to thank Sam Ballas, Jeff Danciger, and Bill Goldman for their deep insights into the preliminaries of the field of deforming geometries and projective structures.  I also wishes to thank Javi G\'{o}mez-Serrano for his time explaining interval arithmetic and helping with some of the code and Tom Goodwillie for his time explaining twisted cohomology of fiber bundles to me.  I wish to extend a very deep thank you to both Joan Porti and Rich Schwartz, both of whom have contributed both moral and mathematical support throughout this paper.  Joan Porti has given me many hours explaining his work with Michael Heusener to me, corrected several mistakes of mine, and provided invaluable insight into his way of thought.  Rich Schwartz was the one who initially suggested this problem to me, proposed further study through interval arithmetic, and has provided me a wealth of patience and understanding for the past several years, for which, I am truly indebted.  Finally, I would like to dedicate this work to Lauren Daly who every single day helps make this work possible.

\begin{section}{Preliminaries}\label{secprelim}
This section is dedicated to providing the necessary tools and language in the field of projective deformations.  Much of these preliminaries are contained in Heusener Porti \cite[Sections 2 \& 3]{Heusener2011Infinitesimal}, though here we provide details intended for the less mature audience.  Experts in the area should consider this section auxiliary.  Topics such as Dehn filling, the Thurston slice, deformation space, infinitesimal projective rigidity, and twisted singular (co)homology are all covered.  For the sake of simplicity and the context of this paper, $N$ will typically denote a orientable 3-manifold without boundary, whereas $M$ will typically denote a compact orientable hyperbolic 3-manifold with toroidal boundary.  Frequently in this work, $M$ is the figure-eight knot complement.  
\begin{subsection}{Projective Rigidity}
Let $N$ be a 3-manifold without boundary.  A $\emph{hyperbolic}$ structure on $N$ is a maximal atlas of charts of $N$, $\phi_{i}: U_{i} \lra \HH^{3}$ into hyperbolic 3-space such that for each connected component $V \subset U_{i}\cap U_{j}$, there exists a hyperbolic isometry $g_{ij}\de \HH^{3} \lra \HH^{3}$ such that the restriction of $\phi_{i}\circ\phi_{j}^{-1}\de \phi_{j}(V) \lra \phi_{i}(V)$ is equal to $g_{ij}$.  If $M$ is a compact 3-manifold with boundary, we define a hyperbolic structure on $M$ to be one on the \emph{interior} of $M$.  A \emph{projective} structure on a manifold is defined similarly with charts into $\RP^{3}$ and where the coordinate changes are restrictions of projectivities in $\PGL(3,\R)$.  \\
\\
The geometry of a \emph{closed} hyperbolic 3-manifold $N$ is entirely determined by a discrete and faithful representation of its fundamental group into $\PSO(3,1)$; the projectivitization of the identity component of the group $SO(3,1)$ which preserves the Lorentzian inner product 
\begin{equation}\label{lorentz}
(x,x) := x_{1}^{2}+x_{2}^{2}+x_{3}^{2}-x_{4}^{2}
\end{equation}
This representation is called the \emph{holonomy representation}, and will be denoted by $\rho \de \pi_{1}(N) \lra \PSO(3,1) \subset \PGL(4,\R)$.  Here we are identifying $H^{3}$ with the Klein model, $K^{3}$, of hyperbolic 3-space and its group of isometries, $\PSO(3,1)$.  This embedding of $K^{3} \lra \RP^{3}$ as an open convex subset of $\RP^{3}$, which is equivariant with respect to the inclusion homomorphism $\PSO(3,1) \subset \PGL(4,\R)$, presents 3-dimensional hyperbolic geometry as a sub-geometry of 3-dimensional projective geometry.  In Heusener Porti \cite[Definition 1.2]{Heusener2011Infinitesimal}, they introduce the following definition.  
\begin{definition}\label{projrig}
A closed hyperbolic 3-manifold $N$ is called \emph{infinitesimally projectively} rigid if 
\begin{equation*}
H^{1}\left(\pi_{1}(N); \slf(4,\R)_{\Ad\rho}\right) = 0
\end{equation*}
\end{definition}
The above is the first group cohomology of $\pi_{1}(N)$ with coefficients in the module $\slf(4,\R)$ twisted by the composition of the holonomy representation of the projective structure of $N$, $\rho \de \pi_{1}(N) \lra \PSO(3,1)$, and the adjoint representation of $\PGL(4,\R)$, $\text{Ad} \de \PGL(4,\R) \lra \Aut(\slf(4,\R))$.  This is addressed more thoroughly in Section \ref{ssdehnfill}.  Here we are using the identification of the Lie algebra of $\slf(4,\R)$ and $\pgl(4,\R)$.  Because the universal cover of our hyperbolic manifolds is hyperbolic space, which is contractible, the abstract group cohomology is isomorphic to the topological cohomology with local coefficients \cite[Chapter VIII]{Brown1982Cohomology}.  Consequently we enjoy many of topological tools such as Poincaré Duality or Mayer-Vietoris.  By the work of Weil \cite[Theorem 1]{Weil1964Remarks} one may think of Definition \ref{projrig} as the statement that there are no non-trivial deformations of representation $\rho\de\pi_{1}(N) \lra \PGL(4,\R)$.  The fact that there are no non-trivial deformations of $\rho\de\pi_{1}(N) \lra \PSO(3,1)$ is a consequence of Mostow Rigidity, so we know for example that $H^{1}(\pi_{1}(N); \mathfrak{pso}(3,1)_{\Ad \rho}) = 0$.  \\
\\
They define projective rigidity for hyperbolic 3-manifolds with \emph{boundary} similarly.
\begin{definition}[Definition 1.3 \cite{Heusener2011Infinitesimal}]\label{projrigrel}
A compact orientable hyperbolic 3-manifold $M$ with toroidal boundary is called \emph{infinitesimally projectively rigid relative to the cusp(s)} if the inclusion map $i \de \del M\lra M$ induces an injection on the level of first cohomology with coefficients in the twisted $\pi_{1}(M)$-module $\slf(4,\R)$.  That is to say,
\begin{equation*}
H^{1}\left(\pi_{1}(M); \slf(4,\R)_{\Ad\rho}\right) \xrightarrow{i^{*}} H^{1}\left(\pi_{1}(\del M); \slf(4,\R)_{\Ad\rho}\right) 
\end{equation*}
is injective.  
\end{definition}
One way of interpreting this definition the following: non-trivial deformations of the projective structure of $M$ induce non-trivial deformations of the projective structure(s) on the toroidal boundary component of $\del M$.  This is to say, there is no way of deforming the geometry of $M$ without deforming the geometry of the boundary torus.  It is still a very active area of research to determine which compact orientable hyperbolic 3-manifolds are infinitesimally projectively rigid relative to their cusps.  In \cite[Theorem 1.1]{Ballas2018Convex}, Ballas, Danciger, and Lee leverage the condition of infinitesimal projective rigidity relative to their cusps to construct convex projective structures on the double of hyperbolic 3-manifolds.  In a forth coming paper by the same authors, they show for sufficiently large slopes within a sector of hyperbolic Dehn filling space about the complete point, the $(p,q$)-Dehn surgery is infinitesimally projectively rigid.  
\end{subsection}

\begin{subsection}{Dehn Filling}\label{ssdehnfill}
As mentioned in the introduction, a remarkable fact about cusped hyperbolic 3-manifolds is that one may use hyperbolic Dehn filling to obtain \emph{closed} hyperbolic 3-manifolds.  Let $M$ denote a compact orientable hyperbolic 3-manifold with a single cusp, and $\rho\de \pi_{1}(M) \lra \PSL(2,\C)$ denote the holonomy of $M$.  It should be mentioned the following process can easily be generalized to the multiple cusped-case, however for ease of explanation we consider the case where $M$ has only a single cusp.\\
\\
Topologically, the cusp of $M$ is $[0,\infty)\times T^{2}$, so in the truncated manifold one has $\pi_{1}(\del M) \simeq \Z^{2}$.  Take simple closed curves representing a meridian and longitude of this boundary torus and denote them $x, l \in \pi_{1}(\del M)$ respectively.  In his notes, Thurston introduced a single complex parameter $u \in \C$ defined in an open neighborhood $0 \in U$ inside the complex plane \cite[Chapter 4]{Thurston2022Geometry}.  This neighborhood parametrizes the holonomy representations $M$ with $0 \in U$ corresponding to the \emph{unique complete} structure on $M$.  Recall the hyperbolic structure on $M$ is complete if and only if the hyperbolic structure on the cusp is complete \cite[Theorem 4.10]{Purcell2020Hyperbolic}.  \\
\\
If we perturb the complete structure of $M$ to an incomplete one with holonomy $\sigma \de \pi_{1}(M) \lra \PSL(2,\C)$, we know that the structure on the cusp is incomplete.  In particular, the generators $\sigma(x)$ and $\sigma(l)$ are loxodromic.  Up to conjugation we may assume that 
\begin{equation}\label{bndtor}
\sigma(x) = \left(
\begin{array}{cc}
e^{u/2} & 0 \\
0 & e^{-u/2}
\end{array}
\right) \text{ and } \sigma(l) = \left(
\begin{array}{cc}
e^{v/2} & 0 \\
0 & e^{-v/2}
\end{array}
\right)
\end{equation}
where $u$ and $v$ are the complex lengths of the translations $\sigma(x)$ and $\sigma(l)$ respectively.  Note these quantities are only well defined up to a $\pm$ and if one writes them in Cartesian coordinates, they are given by a real number and an angle, the latter being well-defined up to integer multiples of $2\pi$.  This gives us a correspondence between representations $\sigma$ close to the complete structure on $M$, and a single complex parameter $u \in \C$ close to $0$.  We denote this correspondence by associating $\sigma$ to $\rho_{u}$, where $\rho_{0}$ is the complete structure on $M$.\\
\\
Note when $u\neq 0$, the equation in the variables $(p,q) \in \R^{2}$ 
\begin{equation*}
pu + qv = 2\pi i \text{ where }u \text{ and } v \text{ are the complex length of } \sigma(x) \text{ and }\sigma(l)
\end{equation*}
has a \emph{unique} solution.  We call the solution $(p,q)$ the \emph{generalized Dehn filling coefficients} of the representation $\rho_{u}$.  When $u = 0$, this corresponds to the complete structure on $M$ and we define the generalized Dehn filling coefficients of $u = 0$ to be $\infty \in S^{2} = \R^{2}\cup \{\infty\}$.  \\
\\
Thurston showed this correspondence defines a diffeomorphism between a sufficiently small neighborhood $U$ about $0 \in \C$ and a neighborhood of $\infty \in S^{2} = \R^{2} \cup \{\infty\}$.  The \emph{connected neighborhood of} $\infty \in S^{2}$ under this diffeomorphism is called \emph{hyperbolic Dehn filling space}; however it should be stated there does not appear to be a consensus in the literature as to what is exactly called \emph{hyperbolic Dehn filling space}.  To the author's knowledge the connected component of $\infty$ is also called the \emph{Thurston Slice}.  The connected component is \emph{not} to be confused with the subset of \emph{all} such $(p,q) \in S^{2}$ which support a hyperbolic structure.  The author has heard this distinct notion referred to as \emph{parameter space}.  Parameter space remains a mysterious object, and to the author's understanding questions about whether it is even path-connected are still not understood.  A detailed treatment of this space and more can be found in \cite[Section 6.3]{Purcell2020Hyperbolic} and \cite[Chapter 4]{Thurston2022Geometry}.  Typically for the rest of this paper $U \subset \C$ will denote a small open neighborhood about $0$ parametrizing the holonomy $\rho_{u}$ of the figure-eight knot complement, and $(p,q)$ will be the corresponding point in hyperbolic Dehn filling space associated to it.  It should also be noted that frequently the point $0 \in U$ or $\infty \in S^{2}$ is referred to as the \emph{ideal point} of the representation variety.  \\
\\
Let $u \in U$ be a point close to the complete structure on a compact orientable hyperbolic 3-manifold $M$ and let $(p,q)$ denote the corresponding parameters in hyperbolic Dehn filling space.  If $u \neq 0$ then $M$ is not complete; however, one may take its metric completion.  The case we are particularly interested in is the one where $(p,q)$ are relatively prime integers.  Thurston's Hyperbolic Dehn filling Theorem guarantees for all but finitely many of these pairs, the metric completion of $M$, denoted by $M_{p,q}$, will support a hyperbolic structure and correspond to the $(p,q)$-Dehn filling on $M$.  Specifically, if we label the boundary torus with $x$ and $l$ simple closed curves representing a meridian and longitude respectively, then $M_{p,q}$ topologically is obtained by gluing a solid torus $D^{2}\times S^{1}$ in such a way that the longitude of $D^{2}\times S^{1}$ is sent to the curve $x^{p}l^{q}$.  This provides us a means of generating many closed orientable hyperbolic 3-manifolds.  The $(p,q)$ for which $M$ does not support a hyperbolic structure are known as \emph{exceptional slopes}.  In the case where $M$ is the figure-eight knot complement it is known that the exceptional slopes are precisely $\{(1,0),(0,1),(1,1),(2,1),(3,1),(4,1)\}$ and all their reflections across the $x$ and $y$-axes \cite[Chapter 4]{Thurston2022Geometry}.  
\end{subsection}
\begin{subsection}{Singular Cohomology with Local Coefficients}\label{subseclocalcohom}
Let $\pi \!:= \pi_{1}(X)$ denote the fundamental group of a finite CW-complex $X$ and let $\rho \de \pi_{1}(X) \lra \GL(V)$ be a representation into a finite dimensional vector space $V$.  Let $\wt{X}$ denote the universal cover of $X$ and $\sigma: \Delta^{n} \lra \wt{X}$ denotes a singular $n$-simplex in $\wt{X}$.  Adopting the standard notation of Hatcher, \cite[Section 3.H]{Hatcher2002Algebraic}, we denote the free abelian group generated by all singular $n$-simplices in $\wt{X}$ by $C_{n}(\wt{X})$.  This $\Z$-module enjoys a \emph{left}-action of $\pi$ given by composition via deck transformations.  That is, if $\sigma: \Delta^{n} \lra \wt{X}$ denotes a singular $n$-simplex, then $\gamma\sigma$ denotes the composition $\gamma \sigma: \Delta^{n} \lra \wt{X}$ where $\gamma \in \pi$ acts on $\wt{X}$ by deck-transformations.  In this manner, $C_{n}(\wt{X})$ may be promoted to a \emph{left} $\ZP$-module.  Similarly, $V$ is a \emph{left} $\ZP$-module via by $\gamma v:= \rho(\gamma) v$ for each $\gamma \in \pi$.  \\
\\
We form the chain complex $C_{*}(X; V) := C_{*}(\wt{X})\otimes_{\ZP}V$.  Care is needed here, as we are forming the tensor product over the typically non-commutative ring $\ZP$.  Specifically, we instead consider $C_{*}(\wt{X})$ as a \emph{right} $\ZP$-module in this tensor product by declaring $\sigma \gamma := \gamma^{-1}\sigma$ where $\gamma^{-1}$ acts by the original left-action in the previous paragraph.  This chain complex $C_{*}(X;V)$ has the boundary map given by $\partial\otimes \id$, where $\del$ is the typical boundary map on singular $n$-chains and $\id$ is the identity on $V$.  The homology groups associated to the chain-complex $C_{*}(X;V)$ are denoted by $H_{*}(X; V)$.  We call these homology groups the \emph{homology of $X$ with coefficients in $V$ twisted by $\rho\de \pi_{1}(X) \lra V$}.  \\
\\
Similarly, we may form the dual chain complex of $C_{*}(X; V)$ by taking the $\ZP$-homomorphisms from $C_{*}(\wt{X})$ into $V$; here we return to viewing $C_{*}(\wt{X})$ as a \emph{left} $\ZP$-module.  Specifically, the cochain-complex is defined by $C^{*}(X;V) := \Hom_{\ZP}(C_{*}(\wt{X}),V)$ where the boundary map $\delta$ is defined in the following manner: for each $F \in \Hom_{\ZP}(C_{n}(\wt{X}),V)$, define $\delta F \in \Hom_{\ZP}(C_{n+1}(\wt{X}),V)$ via $(\delta F)(\sigma):= F(\del \sigma)$ where $\del$ is the typical boundary map on singular $n$-chains.  The cohomology groups of this cochain complex are denoted by $H^{*}(X;V)$ and defined to be the \emph{cohomology of $X$ with coefficients in $V$ twisted by $\rho \de \pi_{1}(X) \lra V$}.  
\end{subsection}
\begin{subsection}{Twisted Group Cohomology}\label{subsectwistedgroup}
Given a representation $\rho\de \pi \lra \GL(V)$ of an abstract group $\pi$ with $V$ a finite dimensional vector space, one may form the \emph{group cohomology of }$\pi$\emph{ with coefficients twisted by} $\rho$.  These cohomology groups will be denoted by $H^{*}(\pi; V_{\rho})$ where the subscript $\rho$ is suppressed in contexts where the representation is understood.  In our case, because we are concerned with manifolds $M$ whose universal covers are contractible, we have a natural isomorphism between the singular topological manifold cohomology of $M$ defined in Section \ref{subseclocalcohom} and the abstract group cohomology defined here.  See Brown \cite[Chapter 3]{Brown1982Cohomology} for more details on the general construction and L\"{o}h \cite{claraloh} for nicely detailed notes.  The construction in a sentence or two begins by taking a projective resolution $P_{*}$ of $\Z$ over the group ring $\ZP$ and considering the dual of the chain complex $P_{*} \otimes_{\ZP} V$.  An explicit construction can be obtained via the so-called \emph{bar resolution} of $\pi$.  \\
\\
The zero and one dimensional cases warrant the most attention.  Before defining the isomorphisms, we introduce the notion of a \emph{crossed-homorphism}.     
\begin{definition}\label{defcrossedmaps}
Let $\rho\de \pi \lra V$ be a representation of $\pi$.  A \emph{crossed-homomorphism} of $\pi$ for the representation $\rho$ is a function $f\de \pi \lra V$ that satisfies $f(ab) = f(a) + af(b)$ for all $a,b \in \pi$ where $a$ acts on $f(b)$ via $\rho$.  We say a crossed-homomorphism $f$ is a co-boundary if there exists a $v \in V$ for which $f(a) = (1-a)v$ for all $a \in \pi$.
\end{definition}
With these definitions we may state the following lemma which can be found in any introductory text about group cohomology, e.g \cite[Section IV.2]{Brown1982Cohomology}
\begin{lemma}\label{lemcrossediso}
Let $\rho\de \pi \lra V$ be a representation of $\pi$.  Then $H^{0}(\pi; V_{\rho})$ is isomorphic to the invariants of the representation, $V^{\rho(\pi)}$.  Moreover, $H^{1}(\pi; V_{\rho})$ is isomorphic to the quotient of the crossed-homomorphisms modulo the co-boundaries as defined in Definition \ref{defcrossedmaps}.  
\end{lemma}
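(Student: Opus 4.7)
The plan is to compute the group cohomology in dimensions zero and one directly from an explicit projective resolution of $\Z$ over $\ZP$, namely the bar resolution, and match the resulting objects with the invariants $V^{\rho(\pi)}$ and the quotient of crossed-homomorphisms modulo coboundaries.

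First I would set up the bar resolution $P_{*} \lra \Z$ where $P_{n}$ is the free left $\ZP$-module on symbols $[g_{1}|\cdots|g_{n}]$ with $g_{i} \in \pi$, and $P_{0} = \ZP$ is free on the empty symbol $[\,]$. The augmentation $P_{0} \lra \Z$ sends each group element to $1$, and the boundary maps are given by the standard alternating-sum formula. Since $P_{n}$ is a free $\ZP$-module on the set $\pi^{n}$, applying $\Hom_{\ZP}(\underscore, V)$ gives an isomorphism of $\Z$-modules $C^{n}(\pi; V_{\rho}) := \Hom_{\ZP}(P_{n}, V) \cong \{\text{functions } \pi^{n} \lra V\}$ (where $C^{0}(\pi; V_\rho) \cong V$ via evaluation at $[\,]$).

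Next I would unwind the coboundary operators $\delta^{0}$ and $\delta^{1}$ in these coordinates. For $v \in V \cong C^{0}$, the boundary formula in the bar resolution gives $(\delta^{0} v)(g) = (1 - g) v$, where $g$ acts through $\rho$. Hence $\ker \delta^{0} = \{v \in V : gv = v \text{ for all } g \in \pi\} = V^{\rho(\pi)}$, proving the first assertion. For $f \de \pi \lra V$ in $C^{1}$, a similar computation yields
\begin{equation*}
(\delta^{1} f)(a,b) = a f(b) - f(ab) + f(a),
\end{equation*}
so the cocycle condition $\delta^{1} f = 0$ is precisely $f(ab) = f(a) + a f(b)$, which is the definition of a crossed-homomorphism. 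Meanwhile $\im \delta^{0}$ is exactly the set of functions of the form $a \mapsto (1-a) v$, i.e.\ the coboundaries in the sense of Definition \ref{defcrossedmaps}. Taking the quotient gives the claimed description of $H^{1}(\pi; V_{\rho})$.

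The only real obstacle is bookkeeping: one must consistently convert between the left $\ZP$-module structure on $P_{*}$ and the action on $V$ so that $\Hom_{\ZP}$ really does reduce to set-theoretic functions $\pi^{n} \lra V$, and the signs in the bar differential must be tracked carefully to land on the conventions of Definition \ref{defcrossedmaps}. Once those normalizations are fixed, both identifications are immediate from the definitions.
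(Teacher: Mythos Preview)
Your argument via the bar resolution is correct and is exactly the standard computation; the only quibble is a harmless sign in $\delta^{0}$ (the bar differential gives $(\delta^{0}v)(g) = gv - v$ rather than $(1-g)v$), which affects neither the kernel nor the image. The paper does not supply its own proof of this lemma but simply defers to Brown \cite[Section IV.2]{Brown1982Cohomology}, where the same bar-resolution argument is carried out.
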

Before stating the significance of the first cohomology group, we introduce another definition.
\begin{definition}\label{repdef}
Let $\rho: \pi \lra G$ be a representation of a group $\pi$ into a group $G$.  Let $\rho_{t}$ be a one-parameter family of representations of $\pi$ into $G$ for which $\rho_{0} = \rho$.  We say that $\rho_{t}$ is a \emph{deformation} of the representation $\rho$.  If there exists a path of elements $g_{t} \in G$ with $g_{0} = 1$ such that $\rho_{t} = g_{t}\rho g_{t}^{-1}$, we say that $\rho_{t}$ is a \emph{trivial deformation}.  If no such path of elements exists, we say that $\rho_{t}$ is a \emph{non-trivial deformation}.
\end{definition}
We emphasize the study of non-trivial deformations frequently corresponds to non-trivial deformations in the geometry of a manifold.  For example, in the context of compact orientable hyperbolic 3-manifolds, as stated in Section \ref{ssdehnfill}, there is a one-dimensional complex parameter $u \in U$ that varies the hyperbolic structure of $M$.  A simpler example comes from considering a Euclidean torus obtained as the quotient of $\R^{2}/\Z^{2}$ where $\Z^{2}$ is the standard integer lattice acting by translations.  One can change the Euclidean structure by extending the meridian or longitude to obtain non-equivalent Euclidean structures on $T^{2}$.  A simple invariant to distinguish them is area.  As one changes the lengths of these curves, the corresponding holonomy representations are non-conjugate, and thus, yield non-trivial deformations of the original holonomy.  This is in essence the Ehresmann-Thurston principle which loosely states that the there's a local homeomorphism between distinct $(G,X)$-structures and non-conjugate representations of the fundamental group of the manifold $M$ in question.  Thus understanding whether the geometry of $M$ can be deformed can in many instances be reduced to understanding if there exists non-trivial deformations of the holonomy representation.  This motivates the importance of the following theorem of Weil's Infinitesimal Rigidity Theorem \cite[Theorem 1]{Weil1964Remarks}.  
\begin{theorem}[Weil Rigidity]\label{thmweilrigidity}
Let $\rho \de\pi \lra G$ be a representation of $\pi$ into an algebraic Lie-group $G$.  Consider $\mathfrak{g}$ as a $\pi$-module through composing $\rho$ with the adjoint representation of $G$.  Denote the resulting $\pi$-module by $\mathfrak{g}_{\Ad \rho}$.  If $H^{1}(\pi; \mathfrak{g}_{\Ad \rho}) = 0$ then $\rho$ admits no non-trivial deformations.  
\end{theorem}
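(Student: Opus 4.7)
The plan is to take an arbitrary smooth one-parameter family $\rho_t$ of representations with $\rho_0 = \rho$, extract its infinitesimal data, show that infinitesimal datum is a $1$-cocycle, and then use the hypothesis to conclude the deformation is trivial. To that end, for each $\gamma \in \pi$ set
\[
c(\gamma) := \left.\frac{d}{dt}\right|_{t=0}\rho_t(\gamma)\,\rho(\gamma)^{-1} \in \mathfrak{g}.
\]
Writing $\rho_t(\gamma) = \phi_t(\gamma)\rho(\gamma)$ with $\phi_0 \equiv 1$ and differentiating the cocycle identity $\phi_t(ab) = \phi_t(a)\bigl(\rho(a)\phi_t(b)\rho(a)^{-1}\bigr)$ at $t=0$, a short manipulation yields the crossed-homomorphism relation $c(ab) = c(a) + \Ad\rho(a)\,c(b)$. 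By Lemma \ref{lemcrossediso}, $c$ represents a class in $H^1(\pi;\mathfrak{g}_{\Ad\rho})$.

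By the vanishing hypothesis together with Lemma \ref{lemcrossediso}, $c$ is a coboundary: there exists $v \in \mathfrak{g}$ with
\[
c(\gamma) = v - \Ad\rho(\gamma)\,v \quad \text{for every } \gamma \in \pi.
\]
A direct computation shows that the right-hand side is precisely the $t$-derivative at $t=0$ of the trivial deformation $\gamma \mapsto \exp(tv)\,\rho(\gamma)\,\exp(-tv)$, so $\rho_t$ agrees with this trivial deformation to first order at $t=0$. What remains is to upgrade this first-order matching to an honest triviality in the sense of Definition \ref{repdef}.

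This last upgrade is the main obstacle, and it is where the hypothesis that $G$ is an algebraic Lie-group intervenes. Assuming $\pi$ is finitely presented (which is automatic in all applications of interest here, where $\pi = \pi_1(M)$ for a compact manifold $M$), I would realize the representation variety $R := \Hom(\pi,G)$ as an algebraic subvariety of $G^n$ cut out by the defining relations of $\pi$. Standard deformation calculus identifies the Zariski tangent space $T_\rho R$ with the space of $1$-cocycles $Z^1(\pi;\mathfrak{g}_{\Ad\rho})$, and the tangent space of the conjugation orbit $G\cdot\rho$ at $\rho$ with the space of coboundaries $B^1(\pi;\mathfrak{g}_{\Ad\rho})$. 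The vanishing $H^1 = 0$ forces $Z^1 = B^1$, so the orbit and the variety have the same tangent space at $\rho$; algebraicity of $G$ and $R$ then promotes this dimension count to the statement that $G\cdot\rho$ is open in $R$ in a neighborhood of $\rho$. Consequently $\rho_t \in G\cdot\rho$ for all sufficiently small $t$, and choosing a local smooth section of the orbit map $G/\mathrm{Stab}(\rho)\to G\cdot\rho$ through the identity coset yields a smooth path $g_t\in G$ with $g_0 = 1$ and $\rho_t = g_t\rho g_t^{-1}$, exhibiting $\rho_t$ as a trivial deformation.
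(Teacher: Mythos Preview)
The paper does not give its own proof of this theorem; it is stated with a citation to Weil \cite[Theorem 1]{Weil1964Remarks} and then used as a black box. So there is no in-paper argument to compare against.

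Your sketch is the standard proof and is essentially correct. The cocycle computation and the identification $T_\rho \Hom(\pi,G) \cong Z^{1}(\pi;\mathfrak{g}_{\Ad\rho})$, $T_\rho(G\cdot\rho) \cong B^{1}(\pi;\mathfrak{g}_{\Ad\rho})$ are exactly Weil's setup. The one place to tighten is the ``dimension count implies the orbit is open'' step: you should say explicitly that the conjugation orbit $G\cdot\rho$ is a smooth locally closed subvariety of the representation variety $R$ (orbits of algebraic group actions always are, in characteristic zero), so $\dim(G\cdot\rho)=\dim B^{1}$. Since $G\cdot\rho\subset R$ and $\dim T_{\rho}R=\dim Z^{1}=\dim B^{1}$, the chain $\dim(G\cdot\rho)\le\dim_{\rho}R\le\dim T_{\rho}R=\dim(G\cdot\rho)$ forces equality throughout; hence $\rho$ is a smooth point of $R$ and the orbit contains an open neighborhood of $\rho$. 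With that in place your conclusion (local section of the orbit map, then $\rho_t=g_t\rho g_t^{-1}$ for small $t$) goes through, and this is precisely the sense in which Definition \ref{repdef} is meant.
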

It is the goal of this paper to employ Theorem \ref{thmweilrigidity} to show that many of the Dehn-surgered figure-eight knot complement manifolds are rigid in $\PGL(4,\R)$, and thus admit no deformations of the projective structure inherited from the complete hyperbolic one.    \\
\\
It is worth mentioning that while this condition is sufficient for infinitesimal rigidity, it is not necessary.  The work of Kapovich and Millson in their `Murphy's Law' paper shows that typically if one has a non-zero element in $H^{1}(\pi; V)$, the cocycle typically does not integrate to a deformation of the original representation of $\rho: \pi \lra \GL(V)$ \cite[Theorem 10.7]{Kapovich1996On}.  Typically, there are an infinite number of obstructions living inside $H^{2}(\pi; V)$ that need to be satisfied.  
\end{subsection}
\begin{subsection}{(Co)homology Pairings}\label{cohompairings}
As in Section \ref{subseclocalcohom}, let $X$ be a finite CW-complex and consider a representation of $\pi_{1}(X)$ into a finite dimensional vector space $V$.  Much like in the case of standard (co)homology theory, we get a well defined bilinear pairing between elements of $H^{p}(X; \VD)$ and $H_{p}(X; V)$ simply by evaluation.  We denote the pairing 
\begin{equation*}\label{kroneckerpair}
\left< \cdot \, , \, \cdot \right>\de H^{p}(X; \VD) \otimes H_{p}(X; V) \lra \R
\end{equation*}
This pairing is induced by the map taking an element $(F, c_{p}\otimes v) \in C^{p}(X; \VD) \times C_{p}(X; V)$ and evaluating $(F(c_{p}))(v)$.  One may readily check that this pairing descends to the level of (co)homology by showing the boundary operators are respected.  That this pairing is non-degenerate is a consequence that the boundary operators on the chain complexes  $C^{p}(X; \VD)$ and $C_{p}(X; V)$ are dual to one another.  Similar to the standard theory, we call this pairing the \emph{Kronecker pairing}.  Note this provides a natural isomorphism between $H_{p}(X; V)^{*}$ and $H^{p}(X; \VD)$ where the dual of (co)homology groups is as $\R$-modules.  This duality is easily remembered as the dual of homology in $V$ is the cohomology in the dual of $V$.\\
\\
In a manner similar to the standard theory, for compact oriented $n$-manifolds $M$ with boundary $\del M$, one may obtain a non-degenerate pairing between 
\begin{equation*}\label{pdpair}
H_{p}(M; \VD) \otimes H_{n-p}(M, \del M; V ) \lra \R
\end{equation*}
via Poincar\'{e}-Lefschetz duality, see \cite{Steenrod1943Homology} for a detailed construction.  This pairing induces an isomorphism between $H_{p}(M; \VD)^{*}$ and $H_{n-p}(M, \del M; V)$.  Because the dual of homology in $W$ is isomorphic to cohomology in the dual of $W$ with $W = V^{*}$, we obtain an isomorphism between $H^{p}(M; V)$ and $H_{n-p}(M, \del M; V)$.  This Poincar\'{e} duality map is denoted by 
\begin{equation*}\label{PD}
PD: H_{n-p}(M, \del M; V) \lra H^{p}(M; V)
\end{equation*}
We may construct another perfect pairing using the cup-product.  Given a cochain $z \in H^{n-p}(M, \del M; V)$ and another $w \in H^{p}(M; \VD)$, the consider the cup product 
\begin{equation*}\label{cupprod}
z\cup w \in H^{n}(M;\del M; V\otimes \VD)
\end{equation*}
We compose on the module level with $V\otimes \VD \lra \R$ with the usual perfect pairing between vectors and covectors, i.e. the trace, to obtain the non-degenerate pairing 
\begin{equation}\label{cuppair}
\cup\de H^{n-p}(M, \del M; V) \otimes H^{p}(M; \VD) \lra \R
\end{equation} 
It is worth emphasizing up until now, all these pairings have been defined solely in terms of topology and not on the existence of a $\pi_{1}(M) := \pi$-invariant correspondence between $V$ and $\VD$.  Recall such correspondences are in bijection with $\pi$-invariant non-degenerate bilinear forms on $V$.  In the case where we have such a $\pi$-invariant non-degenerate bilinear form $b \de V\times V \lra \R$, we have an identification between $V$ and $\VD$ as $\pi$-modules.  In turn, this yields an isomorphism between $H^{p}(M; V)$ and $H^{p}(M; \VD)$.  Applying this isomorphism to Equation \ref{cuppair}, our bilinear form $b$ induces a non-degenerate pairing between $H^{n-p}(M; V)$ and $H^{p}(M; V)$.  We abusively also denote this pairing by $\cup$.   
\begin{equation*}\label{bindcuppair}
\cup\de H^{n-p}(M, \del M; V) \otimes H^{p}(M; V) \lra \R
\end{equation*} 
In the context of this paper, the existence of a $\pi$-invariant non-degenerate bilinear form on $V$ will be guaranteed by the \emph{Killing form} through Cartan's Criterion \cite[Theorem 1.42]{Knapp1996Lie}.  Specifically, let $\g$ denote the Lie-algebra of a semi-simple Lie group $G$.  For any $X, Y \in \g$ denote the Killing-form by $B(X,Y) := \text{tr}(\ad_{X}\circ\ad_{Y})$ where $\ad_{X}$ is the adjoint operator on the Lie-algebra $\g$.  In the context of $\g = \slf(4,\R)$, up to scale, $B(X,Y) = 8\text{tr}(XY)$.  \\
\\
Recall that $\SO(3,1)$ is the group of unit determinant matrices which preserve the Lorentzian inner product defined in Equation \ref{lorentz}.  Through the standard orthonormal basis, we may represent the Lorentzian inner product by the matrix
\begin{equation*}\label{lorentzmat}
J = \left(
\begin{array}{cccc}
1 & 		&		&		\\
 & 		1&		&		\\
 & 		&		1&		\\
 & 		&		&	-1	
\end{array}
\right)
\end{equation*}
\\
so that $\SO(3,1) = \{A \in \GL(4,\R) \, | \, \det(A) = 1 \text{ and } A^{T}JA = J\}$.  One may readily verify that the corresponding Lie-algebra is given by
\begin{equation*}
\mathfrak{so}(3,1) = \{a \in \mathfrak{gl}(4,\R) \, | \, \text{tr}(a) = 0 \text{ and } a^{T}J = -Ja\}
\end{equation*}
where $a^{T}$ denotes the transpose of $a \in \mathfrak{gl}(4,\R)$.  This is the Lie-algebra of the connected component of $\SO(3,1)$, namely, the component that preserves the upper sheet of the hyperboloid defined by $(x,x) = -1$.  
\end{subsection}
\end{section}

\begin{section}{$PSO(3,1)$-representation}\label{secPSOREP}
Let $M$ denote the figure-eight knot complement.  In this section we furnish a family of representations of $\pi_{1}(M)$ into $\PSO(3,1)$ analogous to the deformations of the discrete and faithful representation of $\pi_{1}(M)$ in $\PSL(2,\C)$.  We do this by picking `nice' points in the Klein model of hyperbolic 3-space in such a way that the coordinates of the points, and consequently the matrix representations the transformations in $\PSO(3,1)$, are rational functions in the real and imaginary parts of the shapes of the ideal tetrahedra used to glue together the figure-eight knot complement.  With these representations established, we also review a splitting of the $\pi_{1}(M)$-module $\slf(4,\R)$ into two pieces where one piece will correspond to the projective deformations.  We conclude the section with illustrations depicting the projective deformation of $M$ as constructed by Ballas in \cite{Ballas2014Deformations}.  A remarkable aspect his construction is that the boundary of the invariant convex body is not strictly convex, i.e. it contains non-trivial line segments \cite[Theorem 1.1]{Ballas2015Finite}.  
\begin{subsection}{Klein Model}\label{kleinmodel}
In this section we recall some elementary facts about the Klein model for hyperbolic 3-space, $K^{3}$.  Denote $\R^{3,1}$ as $\R^{4}$ equipped with the Lorentz metric $(\vbf{x},\vbf{y}) := x_{1}y_{1}+x_{2}y_{2}+x_{3}y_{3}-x_{4}y_{4}$ for vectors $\vbf{x} = (x,y,z,w) \in \R^{4}$.  Define $K^{3}$ to be the projectivization of $||\vbf{x}||^{2} < 0$ under the standard projectivization map $\R^{4}\setminus 0 \lra \RP^{3}$.  We may choose coordinates on $K^{3}$ by considering the chart induced by the hyperplane $w = 1$.  Specifically, the coordinates $(x,y,z) \in \R^{3}$ correspond to the point $[x:y:z:1] \in K^{3}$.   \\
\\
Under this identification, we have an isometry from $K^{3}$ to the upper half space model, $H^{3}$ as given in \cite[Section 7]{Levy1997Flavors}. 
\begin{equation*}\label{isom1}
f(x,y,z) = \left(\frac{2x}{1+z},\frac{2y}{1+z},\frac{2\sqrt{1-x^{2}-y^{2}-z^{2}}}{1+z}\right)
\end{equation*}
Its inverse $g$ from $H^{3}$ to $K^{3}$ is given by 
\begin{equation}\label{isom2}
g(x,y,z) = \left(\frac{4x}{4+x^{2}+y^{2}+z^{2}},\frac{4y}{4+x^{2}+y^{2}+z^{2}},\frac{4-x^{2}-y^{2}-z^{2}}{4+x^{2}+y^{2}+z^{2}}\right)
\end{equation}
These isometries extend to the boundaries of their respectively models, $S^{2}$ for $K^{3}$ and $\hat{\C}$ for $H^{3}$, and we observe for example that $g$ takes $0 \in \del H^{3}$ to the point $(0,0,1) \in \del K^{3}$ whereas $\infty \in \del H^{3}$ gets sent to $(0,0,-1) \in \del K^{3}$.\\
\\
To obtain our representation of the figure-eight knot complement group into $\PSL(2,\C)$, we consider the gluing diagram in Figure \ref{figgluingdiag}.
\begin{figure}
	\begin{center}
	\includegraphics[scale=0.5]{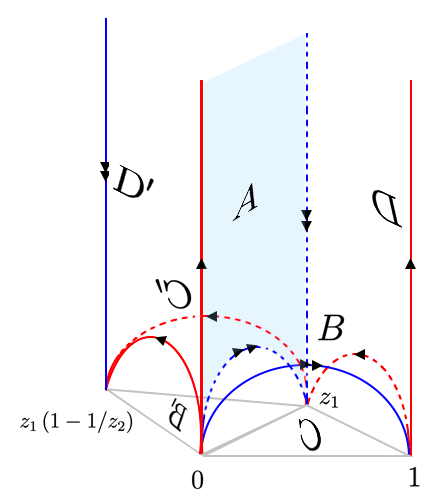}
	\end{center}
	\caption{Gluing diagram for the figure-eight knot complement for choices of complex numbers $z_{1}$ and $z_{2}$ in $\del H^{3}$ with positive imaginary parts.}\label{figgluingdiag}
\end{figure}
There you will see two ideal tetrahedra glued together along their common face $A = A'$.  The common face is depicted in light-blue.  The remaining unprimed faces and primed faces are glued together via the unique hyperbolic isometries taking them to each other.  The way we have chosen the shapes of the ideal tetrahedra are to align with SnapPy's triangulation of the figure-eight knot complement which we will describe here in detail.   \\
\\
In SnapPy, If we let \texttt{M = Triangulation('m004')} and enter \texttt{M.gluing\textunderscore equations()}, we see the following $4\times 6$-matrix produced. 
\begin{equation}\label{SNAPPYgluing}
\left[\begin{array}{ccccccc}
 2 & 1 & 0 & 1 & 0 & 2\\
  0 & 1 & 2 & 1 & 2 & 0\\
   1 & 0 & 0 & 0 & -1 & 0 \\
    0 & 0 & 0 & 0 & -2 & 2
\end{array}\right]
\end{equation}  
Each row corresponds to an equation in terms of the shape parameters $z_{1}$ and $z_{2}$ as in Figure \ref{figgluingdiag}.  Specifically, Row 1 of Equation \ref{SNAPPYgluing} corresponds to the equation
\begin{align}\label{snapglue1}
\underline{2}\cdot \Log z_{1} &+ \underline{1}\cdot \Log\left(\frac{1}{1-z_{1}}\right) + \underline{0} \cdot \Log\left(1-\frac{1}{z_{1}}\right) \nonumber \\
&+  \underline{1}\cdot\Log z_{2} + \underline{0}\cdot \Log\left(\frac{1}{1-z_{2}}\right) + \underline{2}\cdot\Log\left(1-\frac{1}{z_{2}}\right) = 2\pi i 
\end{align}
Similarly, Row 2 of Equation \ref{SNAPPYgluing} corresponds to the equation
\begin{align}\label{snapglue2}
\underline{0}\cdot \Log z_{1} &+ \underline{1}\cdot \Log\left(\frac{1}{1-z_{1}}\right) + \underline{2} \cdot \Log\left(1-\frac{1}{z_{1}}\right) \nonumber \\
&+  \underline{1}\cdot\Log z_{2} + \underline{2}\cdot \Log\left(\frac{1}{1-z_{2}}\right) + \underline{0}\cdot\Log\left(1-\frac{1}{z_{2}}\right) = 2\pi i
\end{align}
Rows 3 and 4 of Equation \ref{SNAPPYgluing} correspond to \emph{completeness equations} regarding the cusp of the figure-eight knot complement.  As we are not requiring our hyperbolic structure to be complete, we may ignore these equations.  It is worth recalling that SnapPy uses the following labeling scheme for its ideal tetrahedra. 
\begin{figure}
	\begin{center}
	\includegraphics[scale=0.5]{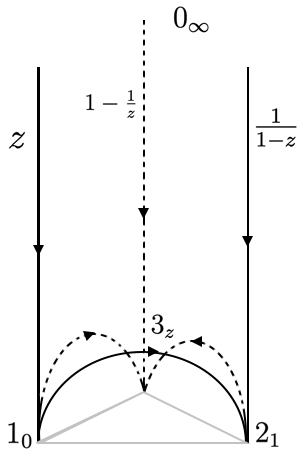}
	\end{center}
	\caption{Typical oriented tetrahedron in standard position $[\infty,0,1,z]$ decorated with some of the edge-invariants}\label{typtet}
\end{figure}
In Figure \ref{typtet}, you can see an ideal tetrahedron $[0,1,2,3]$ whose coordinates in $\del H^{3}$ are $\infty, 0, 1, z$ respectively.  They are decorated as subscripts to help the reader.  The edge invariants associated to the geodesics through $0$, whose coordinate is $\infty$, are labeled.  Recall opposite edges have the same edge invariant, however to keep Figure \ref{typtet} uncrowded, the opposite labels have been suppressed.  These edge-invariants are used to calculate the gluing equations associated to a triangulation of a hyperbolic 3-manifold.  More details about gluing equations from SnapPy can be found in \cite{SNAPPYGluingEQs}.  \\
\\
One may recover Equations \ref{snapglue1} and \ref{snapglue2} from Figure \ref{figgluingdiag} by inspecting the corresponding Thurston gluing equations.  Reading these gluing equations is a relatively involved process that requires one to align each ideal tetrahedra in a manner similar to Figure \ref{typtet}.  More details can be found in \cite[Chapter 4]{Thurston2022Geometry}, but we loosely explain the process here.  \\
\\
For each edge in a gluing diagram, we assign an equation in the shapes of the ideal tetrahedra used to glue the manifold together.  Referring to Figure \ref{figgluingdiag}, one can see two distinct edges, one red edge, one blue edge, and two ideal tetrahedra.  Let us start with the red edge.  For the first ideal tetrahedra with shape $z_{1}$, there is no need to apply an isometry, as it is clear its red edge is already aligned with $[0,\infty]$, and its other vertices are $1$ and $z_{1}$ where $z_{1}$ has positive imaginary real part.  The contribution to the first gluing equation from this ideal tetrahedra will be 
\begin{equation}\label{cont1}
2\Log(z_{1}) + 1\Log\left(\frac{1}{1-z_{1}}\right)
\end{equation}
For the second ideal tetrahedra, apply the isometry taking $[0,z_{1},\infty,z_{1}(1-1/z_{2})]$ to $[0,1,\infty,(1-1/z_{2})]$.  This isometry is just multiplication by $1/z_{1}$ and fixes the geodesic $(0,\infty)$.
\begin{figure}
	\begin{center}
	\includegraphics[scale=0.5]{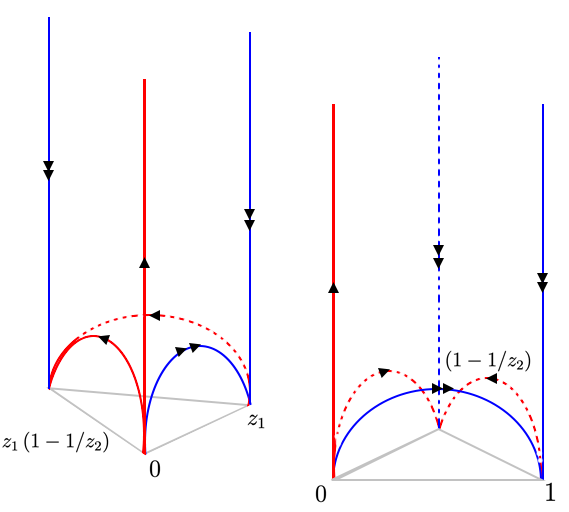}
	\end{center}
	\caption{Rotating the second ideal tetrahedron about the geodesic $[0,\infty]$ to put it in standard position for the gluing equations.  The isometry is $\sigma(w) = w/z_{1}$ which takes $z_{1}(1-1/z_{2})$ to $1-1/z_{2}$.}\label{figT2G1}
\end{figure}
As one can see in Figure \ref{figT2G1}, the contribution to the first gluing equation from this ideal tetrahedron is 
\begin{equation}\label{cont2}
2\Log\left(1 - \frac{1}{z_{2}}\right) + \Log\left(\frac{1}{1 - \left(1 - \frac{1}{z_{2}}\right)}\right) = \Log(z_{2}) + 2 \Log\left(1 - \frac{1}{z_{2}}\right) 
\end{equation}
Summing the contributions from Equation \ref{cont1} and Equation \ref{cont2} yield the desired gluing equation along the red-edge as in Equation \ref{snapglue1}.  Equation \ref{snapglue2} can be obtained in a similar manner by summing contributions from the two tetrahedra along the blue-edge.  Below in Figures \ref{figT2G2} and \ref{T1_G1andG2} are two diagrams that put the tetrahedra into standard position for ease of calculation.  
\begin{figure}
	\begin{center}
	\includegraphics[scale=0.5]{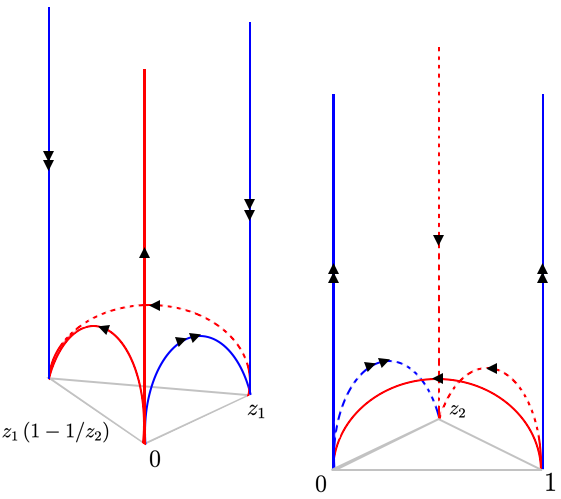}
	\end{center}
	\caption{Isometry applied to the second ideal tetrahedron to put it in standard position for the gluing equations.  The isometry is $\sigma(w) = z_{1}/(z_{1}-w)$ which takes $z_{1}(1-1/z_{2})$ to $z_{2}$.}\label{figT2G2}
\end{figure}
\begin{figure}
	\begin{center}
	\includegraphics[scale=0.5]{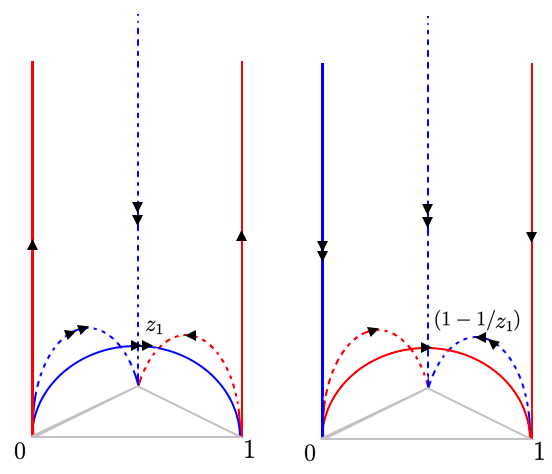}
	\end{center}
	\caption{Isometry applied to the first ideal tetrahedron to put it in standard position for the gluing equations.  The isometry is $\sigma(w) = 1-1/w$ which takes $z_{1}$ to $1-1/z_{1}$.}\label{T1_G1andG2}
\end{figure}
\\
\\
Provided $z_{1}$ and $z_{2}$ are chosen to satisfy Equations \ref{snapglue1} and \ref{snapglue2} and have positive imaginary parts, the gluing pattern as in Figure \ref{figgluingdiag} will produce a hyperbolic structure on the figure-eight knot complement.  This produces a one-parameter family of hyperbolic structures on the figure-eight knot complement parametrized by $z_{1}$, as $z_{2}$ is implicitly defined by $z_{1}$ via the gluing equations in Equations \ref{snapglue1} and \ref{snapglue2}.  Moreover, for each choice of such $z_{1}$ and $z_{2}$, we obtain a representation of the figure-eight knot complement's fundamental group into $\PSL(2,\C)$ via the gluing map isometries.  The case where $ \omega:= e^{\pi i/3} = z_{1} = z_{2}$ corresponds to the unique complete structure on $M$; these are the shapes that satisfy both the gluing and completeness equations of Equation \ref{SNAPPYgluing}.  \\
\\
The gluing maps are generated by isometries $x$ and $y$ which are the maps gluing $D$ to $D'$ and $B$ to $B'$ respectively in Figure \ref{figgluingdiag}.  They are uniquely specified by their actions on the following points in $\del H^{3}$ corresponding to the faces of the two ideal tetrahedra.  
\begin{center}
\begin{minipage}{0.4\textwidth}
\begin{align*}\label{eqgluingmaps}
x(\infty) &= \infty \\
x(z_{1}) &= z_{1}\left(1-\frac{1}{z_{2}}\right)  \\
x(1) &= 0
\end{align*}
\end{minipage}
\begin{minipage}{0.4\textwidth}
\begin{align*}
y(\infty) &= z_{1}\left(1-\frac{1}{z_{2}}\right) \\
y(0) &= 0 \\
 y(1) &= z_{1}
\end{align*}
\end{minipage}
\end{center}
The gluing map taking $C$ to $C'$ is specified uniquely by taking $[0,z_{1},1]$ to $[\infty,z_{1}(1-1/z_{2}),z_{1}]$ which can be readily verified to be given by $yx^{-1}y^{-1}x$, thus $x$ and $y$ generated the fundamental group $\pi_{1}(M)$.  Denoting $a = xy^{-1}x^{-1}y$, a presentation of $\pi_{1}(M)$ is given by
\begin{equation}\label{wirt}
\pi_{1}(M) = \langle x, y \, | \, ax = ya \rangle \text{ where } a = xy^{-1}x^{-1}y
\end{equation}
This yields a representation of the figure-eight knot complement's fundamental group into $\PSL(2,\C)$; namely the holonomy representation.  The corresponding matrix representation of gluing isometries $x$ and $y$ in $\PSL(2,\C)$ are given by
\begin{equation}\label{slmats}
\rho_{u}(x) =\left(
\begin{array}{cc}
\frac{1}{\sqrt{z_1(1-z_2)}} & -\frac{1}{\sqrt{z_1(1-z_2)}}  \\
0 & \sqrt{z_1(1-z_2)}
\end{array}
\right)  \phantom{===}
\rho_{u}(y) = \left(
\begin{array}{cc}
\sqrt{z_1(1-z_2)} & 0 \\
-\frac{z_2}{\sqrt{z_1(1-z_2)}} & \frac{1}{\sqrt{z_1(1-z_2)}}
\end{array}
\right) 
\end{equation}
where $u$ is the complex length of $\rho_{u}(x)$ as defined in Section \ref{ssdehnfill}.  \\
\\
Moreover, if we denote $b = xy^{-1}$ and $l = b^{-1}a^{-1}ba$, then $x$ and $l$ generate the fundamental group of the boundary torus $\del M \subset M$.  The longitude $l$ under our representation is given below.
\begin{equation*}\label{sll}
\rho_{u}(l) = \left(
\begin{array}{cc}
 \frac{z_{2}}{(z_{2}-1)^2} & -\frac{((z_{2}-3) z_{2}+1) (z_{1}
   (z_{2}-1)-z_{2})}{z_{1} (z_{2}-1)^2 z_{2}} \\
 0 & \frac{(z_{2}-1)^2}{z_{2}} \\
\end{array}
\right)
\end{equation*}
 \\
\\
With our choice of generators $x$ and $l$ for $\pi_{1}(\del M)$, following Thurston we let 
\begin{equation}\label{complexlen}
u = \Log\left(z_{1}(1-z_{2})\right) \text{ and } v = 2\Log\left(-\frac{(1-z_{2})^{2}}{z_{2}}\right)
\end{equation}
where the logarithms are branched so that if $z_{1} = z_{2} = \omega$, then $u = v = 0$.  By definition, $u$ and $v$ are the complex lengths of $\rho_{u}(x)$ and $\rho_{u}(l)$.  For $z_{1} \neq \omega$, there exists a unique solution in the real variables $(p,q)$ for which $pu + qv = 2\pi i$.  These are the generalized Dehn filling coefficients of the representation $\rho_{u}$ as defined in Section \ref{ssdehnfill} which parametrize our family of representations into $\PSL(2,\C)$.  With the parametrization which deforms the complete hyperbolic structure of $M$, we move onto parameterizing representations in $\PSO(3,1)$.  
\end{subsection}

\begin{subsection}{Klein model representation}\label{kleinmodelrep}
Let $M$ denote the figure-eight knot complement.  The goal of this section is to construct an explicit family of representations $p_{u}: \pi_{1}(M) \lra \PSO(3,1)$ deforming the discrete and faithful representation of $\rho_{0}\de\pi_{1}(M) \lra \PSO(3,1)$ in such a manner that the representations are rational in the real and imaginary parts of the shapes of the ideal tetrahedra used to glue $M$ together.  If one takes an isomorphism between $\PSL(2,\C)$ and $\PSO(3,1)$ as in \cite[Section 2.1]{Cooper2006Computing} for example, then simply composing the representations yields a deformation; however it is not entirely obvious that this can be done with rational matrix entries in the shape parameters.\\
\\
Let us consider the representation $\rho_{u}(x) \in \PSL(2,\C)$ as defined in Equation \ref{slmats}.  Abusing notation, we will denote this isometry by $x$.  We look at points along the boundary of $\del H^{3}$ that are rational expressions in $z_{1}$ and $z_{2}$ whose image under $x$ is also a rational expression of $z_{1}$ and $z_{2}$.  We call such points in $\del H^{3}$ \emph{rational points for} $x$.  We define rational points for $x$ in $\del K^{3}$ similarly.  Note that a point is rational for $x$ in $\del H^{3}$ if and only if its corresponding point under the isometry $g$ in Equation \ref{isom2} is a rational point for $x$ in $\del K^{3}$.   \\
\\
In theory, three non-collinear points are sufficient to determine an isometry of $K^{3}$, and thus its matrix representation in $\PSO(3,1)$.  Practically however, it is easier to pick five rational points in $\del H^{3}$ for $x$ whose images under $g$ form a projective basis for $\RP^{3}$ and determine the matrix representation this way. \\
\\
Recall, a \emph{projective basis} $\beta \subset \RP^{n}$ is a collection of $(n+2)$-points such that any sub-collection of $(n+1)$-points in $\beta$ determine a linearly independent set in $\R^{n+1}$.  Similar to how a linear isomorphism from $\R^{n+1}$ to itself is determined by its action on a basis of $(n+1)$-points, a projective isomorphism of $\RP^{n}$ is determined by its action on a projective basis of $(n+2)$-points.  \\
\\
Note for each point $p \in \del H^{3}\setminus \infty = \C \subset \hat{\C}$, if we express $p = (x,y,0)$ in its real and imaginary parts, $g$ will take $p$ to 
\begin{equation*}
g(p) = g(x,y,0) = \left(\frac{4x}{4+x^{2}+y^{2}},\frac{4y}{4+x^{2}+y^{2}},\frac{4-x^{2}-y^{2}}{4+x^{2}+y^{2}}\right)
\end{equation*}
whereas $g(\infty)$ gets mapped to $(0,0,-1)$.  We denote $P \in \del K^{3} = S^{2}$ by the image of a point $p \in \del H^{3}$ under $g$; that is $g(p) = P$.  Similarly, if we let $q = x(p)$, we know that because $x$ preserves the boundary $\del H^{3}$, we have that $g(q)$ will also be in $\del K^{3}$.  We denote $Q$ by the image of $q = x(p) \in \del H^{3}$ under $g$.  \\
\\
We choose $5$-rational points $\{p_{1},\hdots, p_{5}\} \subset \del H^{3} = \hat{\C}$ that get mapped to a projective basis $\{P_{1},\hdots, P_{5}\} \subset \del K^{3}$.  We try and choose these $p_{i}$ so that each $p_{i}$ and the corresponding $q_{i}$ have relatively simple rational expressions in $z_{1}$ and $z_{2}$.  This is so that when we write the $p_{i}$'s in terms of their real and imaginary parts as $p_{i} = (x_{i},y_{i},0)$, each $P_{i}$ and $Q_{i}$ is not too unwieldy for Mathematica's symbolic algebra algorithm.    \\
\\
The five points $\{p_{1},p_{2},\hdots, p_{5}\} \subset \del H^{3}$ we choose for $x$ are
\begin{center}\label{xpiqi}
\begin{minipage}{0.4\textwidth}
\begin{align*}
p_{1} &= \infty\\
p_{2} &= 2-z_{2}  \\
p_{3} &= 1 \\
p_{4} &= 1/z_{2} \\
p_{5} &= 1+z_{1}-z_{1}z_{2}
\end{align*}
\end{minipage}
\begin{minipage}{0.4\textwidth}
\begin{align*}
q_{1} &= x(p_{1}) =  \infty\\
q_{2} &= x(p_{2}) =  1/z_{1}  \\
q_{3} &= x(p_{3})  = 0 \\
q_{4} &= x(p_{4})  = 1/(z_{1}z_{2}) \\
q_{5} &= x(p_{5})  = 1
\end{align*}
\end{minipage}
\end{center}
To map the $p_{i}$ and $q_{i}$ through $g$, we expand these points into their real and imaginary parts.  Denote the images of $p_{i}$ under $g$ by $P_{i}$ and the images of $q_{i}$ under $g$ by $Q_{i}$.  We wish to find a matrix in $\PSO(3,1)$ representing the projective map taking $P_{i}$ to $Q_{i}$.  This calculation is long and tedious, and carried out in \texttt{PRFE\textunderscore Mathematica.nb}, however we explain the process here.  Each point $[X:Y:Z:W] \in K^{3}$ has a representative with $W = 1$ corresponding to the affine patch $W \neq 0$.  Thus, we may pick representatives of $P_{i}$ and $Q_{i}$ in $K^{3}$ where the last coordinate is $W = 1$.  Let us denote the representatives for $P_{i}$ by $v_{i} \in \R^{4}$ and the representatives for $Q_{i}$ by $w_{i} \in \R^{4}$.  \\
\\
Because $\{v_{1},\hdots, v_{4}\}$ is a basis for $\R^{4}$, there exists unique scalars $s_{1},\hdots, s_{4} \in \R$ for which 
\begin{equation}\label{v5veceq}
s_{1}v_{1} + s_{2}v_{2} + s_{3}v_{3} + s_{4}v_{4} = v_{5}
\end{equation}
Note that none of the scalars $s_{i} = 0$, for if this were so, then $\{v_{1},v_{2},v_{3},v_{4},v_{5}\}$ would not be a projective basis for $\RP^{3}$.  An analogous statement holds for $\{w_{1},\hdots, w_{4}\}$, $w_{5}$ and the existence of $t_{1},\hdots, t_{4} \in \R$ satisfying
\begin{equation}\label{w5veceq}
t_{1}w_{1} + t_{2}w_{2} + t_{3}w_{3} + t_{4}w_{4} = w_{5}
\end{equation}
We wish to find a linear map $A$ that takes $v_{i}$ to $\lambda_{i} w_{i}$ for some $0 \neq \lambda_{i} \in \R$.  Without loss of generality, we may assume that $Av_{5} = w_{5}$ for if $\lambda_{5} \neq 1$, we may simply replace $A$ by $A/\lambda_{5}$.  Applying $A$ to Equation \ref{v5veceq} yields $Av_{5} = w_{5}$ on the right hand side.  Equating the left hand side of Equation \ref{v5veceq} with $A$ applied to it, and the left hand side of Equation \ref{w5veceq} yields
\begin{align*}
Av_{5} &= A\left(s_{1}v_{1} + s_{2}v_{2} + s_{3}v_{3} + s_{4}v_{4}\right) = s_{1}\lambda_{1}w_{1} + s_{2}\lambda_{2}w_{2} + s_{3}\lambda_{3}w_{3} + s_{4}\lambda_{4}w_{4}  \\
w_{5} &= t_{1}w_{1} + t_{2}w_{2} + t_{3}w_{3} + t_{4}w_{4} 
\end{align*}
Because $\{w_{1},w_{2},w_{3},w_{4}\}$ forms a basis for $\R^{4}$ this uniquely determines each $\lambda_{i} = t_{i}/s_{i}$ for $i = 1\hdots 4$.  From there we may use typical linear algebra to solve for the matrix $A$ taking $v_{i}$ to $\lambda_{i}w_{i}$ for $i = 1\hdots 4$.  \\
\\
This calculation is carried out in \texttt{PRFE\textunderscore Mathematica.nb} and yields the following results, which, we write up to scale.  We adopt the standard notation that $z_{*x}$ is the real part of $z_{*}$ and $z_{*y}$ is the imaginary part of $z_{*}$.  
\begin{tiny}
\begin{equation}\label{projx}
\left(
\begin{array}{cccc}
  8 (z_{1x}-z_{1x} z_{2x}+z_{1y} z_{2y}) 		& -8 (-z_{1y}+z_{1y}z_{2x}+z_{1x}z_{2y})  		& 4 (-z_{1x}+z_{1x}z_{2x}-z_{1y} z_{2y}) 	& 		4 (-z_{1x}+z_{1x}z_{2x}-z_{1y} z_{2y}) \\
 8 (-z_{1y}+z_{1y}z_{2x}+z_{1x}z_{2y}) 		& 8 (z_{1x}-z_{1x} z_{2x}+z_{1y} z_{2y}) 		& 4 (z_{1y}-z_{1y}z_{2x}-z_{1x}z_{2y}) 	& 4 (z_{1y}-z_{1y}z_{2x}-z_{1x}z_{2y})  \\
  4 									& 0 									& 3+4|z_{1}(1-z_{2})|^{2} 				& -5+4|z_{1}(1-z_{2})|^{2} \\
  -4 									& 0 									& -3+4|z_{1}(1-z_{2})|^{2} 				& 5+4|z_{1}(1-z_{2})|^{2} 
\end{array}
\right)
\end{equation}
\end{tiny}
The same method may be applied to find the representation of $y$ in $\PSO(3,1)$ with a different choice of a projective basis.  This is detailed thoroughly in  \texttt{PRFE\textunderscore Mathematica.nb} and the result yields
\begin{tiny}
\begin{equation}\label{projy}
\left(
\begin{array}{cccc}
2 (z_{1x}-z_{1x}z_{2x}+z_{1y}z_{2y})		&-2(z_{1y}-z_{1x}z_{2y}-z_{1y}z_{2x})		&4 \left(z_{1x} z_{2x}+z_{1y}z_{2y}-z_{1x}|z_{2}|^{2}\right)		&-4 \left(z_{1x} z_{2x}+z_{1y}z_{2y}-z_{1x}|z_{2}|^{2}\right)	 \\
2(z_{1y}-z_{1x}z_{2y}-z_{1y}z_{2x})		&2 (z_{1x}-z_{1x}z_{2x}+z_{1y}z_{2y})		&4\left(- z_{1x}z_{2y} + z_{1y}z_{2x}-z_{1y}|z_{2}|^{2} \right)		&-4\left(- z_{1x}z_{2y} + z_{1y}z_{2x}-z_{1y}|z_{2}|^{2} \right)\\
-4z_{2x}								&4z_{2y}								&1-4|z_{2}|^{2} + |z_{1}(1-z_{2})|^{2}							&1 + 4|z_{2}|^{2} - |z_{1}(1-z_{2})|^{2}	\\
-4z_{2x}								&4z_{2y}								&1-4|z_{2}|^{2} - |z_{1}(1-z_{2})|^{2}							&1 + 4|z_{2}|^{2} + |z_{1}(1-z_{2})|^{2}	
\end{array}
\right)
\end{equation}
\end{tiny}
In the parabolic case where $z_1 = z_2 = \omega = (1+i\sqrt{3})/2$, Equation \ref{projx} and \ref{projy} yield
\begin{equation}\label{paraspx}
x = \left(
\begin{array}{cccc}
 8 & 0 & -4 & -4 \\
 0 & 8 & 0 & 0 \\
 4 & 0 & 7 & -1 \\
 -4 & 0 & 1 & 9 \\
\end{array}
\right)\phantom{===} y = \left(
\begin{array}{cccc}
 2 & 0 & 2 & -2 \\
 0 & 2 & -2 \sqrt{3} & 2 \sqrt{3} \\
 -2 & 2 \sqrt{3} & -2 & 4 \\
 -2 & 2 \sqrt{3} & -4 & 6 \\
\end{array}
\right)
\end{equation}
These representations are technically in $\PSO(3,1)$ however, one may take lifts to $\text{SO}(3,1)$ by normalizing the matrices.  Note that in Equation \ref{projx} and \ref{projy}, we have expressed the generators of the fundamental group of $\pi_{1}(M)$ in terms of matrices in $\PSO(3,1)$ whose entries are rational in the real and imaginary parts of the ideal teterahedra that glue together to yield the figure-eight knot complement.  These representations also parametrize the deformation away from the discrete and faithful representation of $\pi_{1}(M)$ into $\PSO(3,1)$.  
\end{subsection}

\begin{subsection}{Cohomology and Projective Deformations}\label{visualballas}
Let $M$ denote the figure-eight knot complement and consider a representation $\rho_{u}: \pi_{1}(M) \lra \PSO(3,1) \subset \PGL(4,\R)$ as in Equations \ref{projx} and \ref{projy} where $u \in U$ is in the hyperbolic Dehn filling space of the figure-eight knot complement.  Following Johnson and Millson, split $\slf(4,\R)$ as $\pi_{1}(M)$-modules \cite[Section 1]{Johnson1987Deformation}.  Take the orthogonal complement of $\so(3,1)$ inside of $\slf(4,\R)$ under the Killing-form to obtain a $\pi_{1}(M)$-complement which we denote $\vv$.  This provides a decomposition of $\slf(4,\R)$ into $\pi_{1}(M)$-modules
\begin{equation*}\label{liesplit}
\slf(4,\R) = \so(3,1) \oplus \vv
\end{equation*}
where $\vv$ is a $\dim \slf(4,\R) - \dim \so(3,1) = 15 - 6 = 9$-dimensional complement.  \\
\\
We wish to understand the deformations of $\rho_{u} \de \pi_{1}(M) \lra \PGL(4,\R)$, thus we are naturally led to study the cohomology group $H^{1}(\pi_{1}(M); \slf(4,\R)_{\Ad \rho_{u}})$ associated to the $\pi_{1}(M)$-module, $\slf(4,\R)_{\Ad \rho_{u}}$ given by the composition of $\rho_{u}$ with the adjoint representation of $\PGL(4,\R)$ as in Theorem \ref{thmweilrigidity}.  Due to the contractibility of the universal cover of $M$, we have an isomorphism between $H^{1}(\pi_{1}(M); \slf(4,\R)_{\Ad \rho})$ and $H^{1}(M; \slf(4,\R)_{\Ad \rho_{u}})$, where the latter is the topological group cohomology as defined in Section \ref{subseclocalcohom}.  We will therefore move between topological and group cohomology freely.  \\
\\
Because the Killing-form restricted to $\vv$ is non-degenerate and $\pi_{1}(M$)-invariant, we obtain a splitting of the cohomology groups as below
\begin{equation*}\label{cohomsplit}
H^{*}(M; \slf(4,\R)_{\Ad \rho_{u}}) = H^{*}(M; \so(3,1)_{\Ad \rho_{u}})\oplus H^{*}(M; \vv_{\Ad \rho_{u}})
\end{equation*}
The factor $H^{*}(M; \so(3,1)_{\Ad \rho_{u}})$ is a well studied object and many properties regarding these cohomology groups may be seen as the algebraic analogues to Thurston's deformation theory of cusped hyperbolic 3-manifolds, \cite[Section 8.8]{Kapovich2010Hyperbolic}.  Hyperbolic deformations of the complete structure on $H^{1}(M; \so(3,1)_{\Ad \rho_{u}})$ are known to be parameterized by the a single complex number for each toroidal boundary component as described in Section \ref{ssdehnfill}.\\
\\
What is less well understood is the complementary term $H^{1}(M; \vv_{\Ad \rho_{u}})$.  Non-hyperbolic deformations give rise to non-zero elements in this cohomology group.  Consider the composition $\Ad \rho_{u}: \pi_{1}(M) \lra \PSO(3,1) \lra \Aut(\vv)$.  Having expressed the generators in $\PSO(3,1)$ as matrices with rational entries in the real and imaginary parts of $z_{1}$ and $z_{2}$, we aim to do the same with the images of $x$ and $y$ in $\PSO(3,1)$ in $\Aut(\vv)$ under the adjoint representation.  \\
\\
To this end, we first choose the following basis $\beta = \{v_{1},\hdots, v_{9}\}$ for $\vv$.   
\begin{align}\label{vbasis}
v_{1} &=\left(
\begin{array}{cccc}
 1 & 0 & 0 & 0 \\
 0 & 0 & 0 & 0 \\
 0 & 0 & 0 & 0 \\
 0 & 0 & 0 & -1 \\
\end{array}
\right) \phantom{=}
v_{2} =\left(
\begin{array}{cccc}
 0 & 0 & 0 & 0 \\
 0 & 1 & 0 & 0 \\
 0 & 0 & 0 & 0 \\
 0 & 0 & 0 & -1 \\
\end{array}
\right) \phantom{=}
v_{3} =\left(
\begin{array}{cccc}
 0 & 0 & 0 & 0 \\
 0 & 0 & 0 & 0 \\
 0 & 0 & 1 & 0 \\
 0 & 0 & 0 & -1 \\
\end{array}
\right) \nonumber \\
v_{4} &= \left(
\begin{array}{cccc}
 0 & 1 & 0 & 0 \\
 1 & 0 & 0 & 0 \\
 0 & 0 & 0 & 0 \\
 0 & 0 & 0 & 0 \\
\end{array}
\right)\phantom{=}
v_{5} = \left(
\begin{array}{cccc}
 0 & 0 & 1 & 0 \\
 0 & 0 & 0 & 0 \\
 1 & 0 & 0 & 0 \\
 0 & 0 & 0 & 0 \\
\end{array}
\right) \phantom{=}
v_{6} =\left(
\begin{array}{cccc}
 0 & 0 & 0 & -1 \\
 0 & 0 & 0 & 0 \\
 0 & 0 & 0 & 0 \\
 1 & 0 & 0 & 0 \\
\end{array}
\right) \nonumber \\
v_{7} &= \left(
\begin{array}{cccc}
 0 & 0 & 0 & 0 \\
 0 & 0 & 1 & 0 \\
 0 & 1 & 0 & 0 \\
 0 & 0 & 0 & 0 \\
\end{array}
\right)\phantom{=}
v_{8} =\left(
\begin{array}{cccc}
 0 & 0 & 0 & 0 \\
 0 & 0 & 0 & -1 \\
 0 & 0 & 0 & 0 \\
 0 & 1 & 0 & 0 \\
\end{array}
\right)\phantom{=}
v_{9} =\left(
\begin{array}{cccc}
 0 & 0 & 0 & 0 \\
 0 & 0 & 0 & 0 \\
 0 & 0 & 0 & -1 \\
 0 & 0 & 1 & 0 \\
\end{array}
\right)
\end{align}
The adjoint action of $g \in \PSO(3,1)$ on $\vv$ is simply conjugation.  That is, $\Ad g$ acts on the vector $v \in \vv$ via $gvg^{-1}$.  Pre-composing this action with the representation $\rho_{u}: \pi_{1}(M) \lra \PSO(3,1)$ yields a $\pi_{1}(M)$-action on $\vv$ via 
\begin{equation}\label{eqadact}
\left[\Ad \rho_{u}(\gamma)\right](v) := \rho_{u}(\gamma)v\rho_{u}(\gamma)^{-1} \text{ for each } \gamma \in \pi_{1}(M)
\end{equation}
Recall that $\pi_{1}(M)$ is generated by $x$ and $y$, and both $\rho_{u}(x)$ and $\rho_{u}(y)$ are rational in in the real and imaginary parts of the shapes of the ideal tetrahedra, $z_{1}$ and $z_{2}$, used to glue $M$ together as in Figure \ref{figgluingdiag}.  The matrix representations of $\Ad \rho_{u}(x)$ and $\Ad \rho_{u}(y)$ relative to $\beta$ are thus rational in the real and imaginary parts of $z_{1}$ and $z_{2}$, as both $\rho_{u}(x)$ and $\rho_{u}(y)$ are, and because the adjoint representation is given by matrix conjugation.  \\
\\
Thus we able to express the representation $\Ad \rho_{u}: \pi_{1}(M) \lra \Aut(\vv)$ as a rational representation in the real and imaginary parts of $z_{1}$ and $z_{2}$.  The explicit matrices can be found in \texttt{PRFE\underscore Mathematica\underscore File.nb}.  They are the variables \texttt{adpX} and \texttt{adpY} respectively.  The rational representation property will become particularly useful in later sections as we formally verify the rigidity of many Dehn-surgeries on the figure-eight knot complement.
\end{subsection}

\begin{subsection}{Ballas Deformation}\label{ssballas}
In this subsection we provide explicit deformations of the complete hyperbolic structure of the figure-eight knot complement constructed by Ballas.  We illustrate the deformation through totally geodesic slices of the convex domains.  Through his construction of normal forms of two generator free groups with irreducible parabolic image in $\PSO(3,1) \subset \PGL(4,\R)$, Ballas provided the family of deformations of the figure-eight knot complement given below  \cite[Section 5.1]{Ballas2014Deformations}.  
\begin{equation}\label{BallasDeform}
\rho(x)_{t} := \left(
\begin{array}{cccc}
1 & 0 & 0 & 0\\
t & 1 & 0 & 0 \\
2 & 1 & 1 & 0 \\
1 & 1 & 0 & 1
\end{array}
\right)
\text{ and } 
\rho(y)_{t} := \left(
\begin{array}{cccc}
1 & 0 & 1 & \frac{3-t}{t-2} \\
0 & 1 & 1 & \frac{1}{t-2} \\
0 & 0 & 1 & \frac{t}{2(t-2)} \\
0 & 0 & 0 & 1
\end{array}
\right) 
\end{equation}
Here $t = 4$ corresponds to the complete hyperbolic representation.  It is worth noting, we switched the generators $x_{t}$ and $y_{t}$ originally provided in \cite{Ballas2014Deformations} in order to agree with our presentation of $\pi_{1}(M)$ given in Equation \ref{wirt}.  To see this is a non-trivial deformation, one may calculate the trace of the longitude.  If we denote $a = xy^{-1}x^{-1}y$ and $b = xy^{-1}$, then a longitude of the boundary torus of $M$ that commutes with $x$ is given by $l = b^{-1}a^{-1}ba = (yx^{-1}y^{-1}x)(xy^{-1}x^{-1}y)$.  This calculation yields that $\tr \rho_{t}(l) = (48+(t-2)^{4})/8(t-2)$.  Thus we see for different values of $t$, we are indeed obtaining non-conjugate representations, otherwise the trace would remain constant.  One can calculate there is only one value of $t$ for which $l$ is parabolic, namely $t = 4$.  This says that there is no such projective deformation if we require the longitude to remain parabolic throughout the deformation.  Ballas's observation is validated by Corollary \ref{deforml} in Section \ref{oncepuncturedtorus} which states this is necessarily the case.  \\
\\
We conclude this subsection by providing some illustrations of Ballas's deformation.  In \texttt{Projective\underscore Tetrahedra} we wrote a program that illustrates the deformation in the universal cover of $M$.  Let $\Gamma_{t}: =\rho_{t} \, \pi_{1}(M) \subset \PGL(4,\R)$.  We tile the $\Gamma_{t}$-invariant convex body $\Omega_{t} \subset \RP^{3}$ by constructing two projective tetrahedra $T_{1t}$ and $T_{2t}$ inside $\RP^{3}$ and depict their orbits under $\Gamma_{t}$.  We first observe that $\Omega_{4}$ is the projectivization of $||x|| < 0$ under the bilinear form induced by the matrix
\begin{equation*}
\left(
\begin{array}{cccc}
 1 & -1 & 1 & -4 \\
 -1 & 1 & -1 & 1 \\
 1 & -1 & 4 & -4 \\
 -4 & 1 & -4 & 4 \\
\end{array}
\right)
\end{equation*}
We define by the following vertices which are the fixed points of $y_{t}$, $y_{t}^{-1}x_{t}y_{t}$, $x_{t}$, $x_{t}y_{t}^{-1}x_{t}y_{t}x_{t}^{-1}$ and $x_{t}y_{t}x_{t}^{-1}$ respectively, where here we are denoting $x_{t}$ and $y_{t}$ abusively by $\rho_{t}(x)$ and $\rho_{t}(y)$ respectively.  
\begin{footnotesize}
\begin{equation}
p_{1t} := \left[\begin{array}{c}-1\\3\\0\\0\end{array}\right] \phantom{=}
p_{2t} := \left[\begin{array}{c}-8\\8\\-\frac{8 (t-4)}{t-2}\\-16\end{array}\right] \phantom{=}
p_{3t} := \left[\begin{array}{c}0\\0\\-1\\-1\end{array}\right] \phantom{=}
p_{4t} := \left[\begin{array}{c}t-2\\t^2-3 t+2 \\2 t-6 \\2 t-4\end{array}\right] \phantom{=} 
p_{5t} := \left[\begin{array}{c}1\\1+t\\3\\2\end{array}\right] 
\end{equation}
\end{footnotesize}
Define $T_{1t}:=[p_{1t},p_{2t},p_{3t},p_{4t}]$ and $T_{2t} := [p_{1t},p_{3t},p_{4t},p_{5t}]$, and then inspect their orbits under $\Gamma_{t}$.  In the following pictures, we slice $\Omega_{t}$ by totally geodesic subspaces and depict their intersections with the two tetrahedra.  The orbits of $T_{1t}$ are depicted in red whereas the orbits of $T_{2t}$ are depicted in blue.  \\
\\
In the images produced below, we use totally geodesic slices for different heights relative to the chart for $\RP^{3}$ induced by the hyperplane $-\frac{3}{4} x + \frac{1}{4} y - \frac{1}{2} z + \frac{3}{16} w = 1$.  Specifically, the chart from $\phi: \R^{3} \lra \RP^{3}$ is given by
\begin{equation*}\label{chart}
\phi(y,z,w) := \left[
\begin{array}{c}
 -\frac{4}{3} \left(-\frac{3 w}{16}-\frac{y}{4}+\frac{z}{2}+1\right) \\
 y \\
 z \\
 w \\
\end{array}
\right]
\end{equation*}
In Figure \ref{t4000}, we depict several different heights for values of $w$ slicing through $\Omega_{4}$, a projective equivalent of the Klein model corresponding to the hyperbolic structure on $M$, tiled by our ideal tetrahedra.  
\begin{figure}
	\begin{center}
	\includegraphics[scale=0.225]{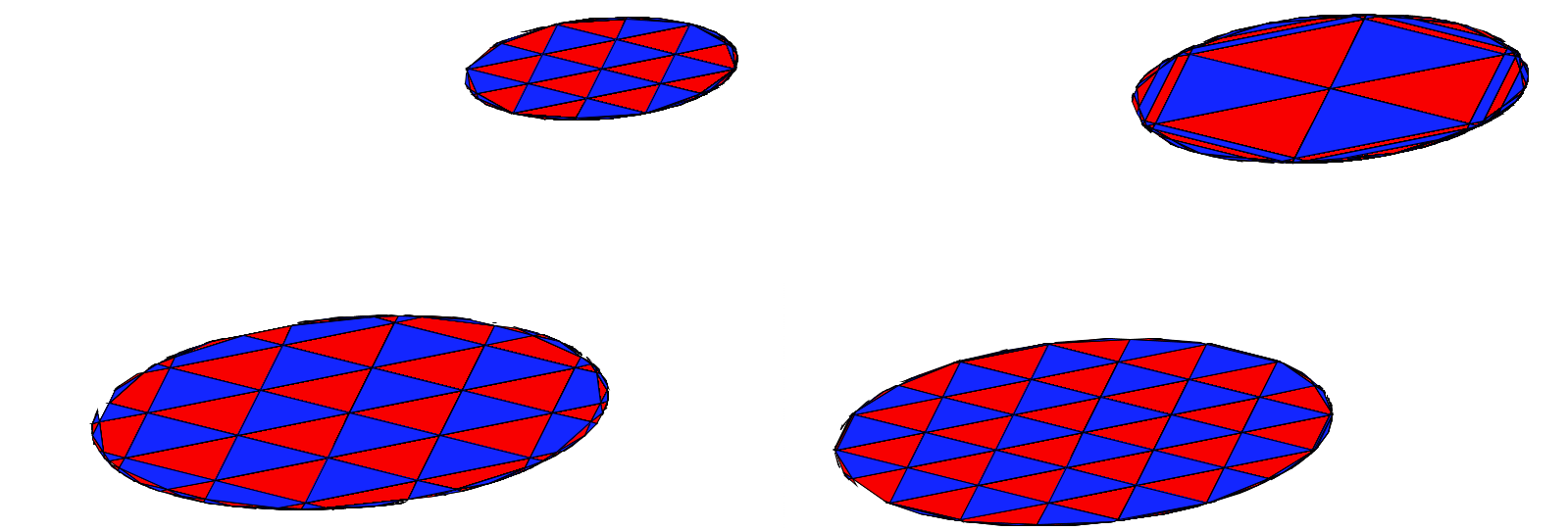}
	\end{center}
	\caption{Totally geodesic slices of $\Omega_{4}$ at different values of $w$.  Reading left to right, top to bottom, the values of $w$ are $-1.2,-2.85,-8.65,$ and $-10.25$.  This corresponds to the hyperbolic structure on $M$.  Note as we slice through close to a cusp, e.g. $w = -10.25$, the parallelism classes of the edges of the ideal tetrahedra are preserved.}\label{t4000}
\end{figure}
Figure \ref{N02200deform} is a fixed geodesic slice, $w = -2.2$, undergoing deformation for different values of $t$.  
\begin{figure}
	\begin{center}
	\includegraphics[scale=0.225]{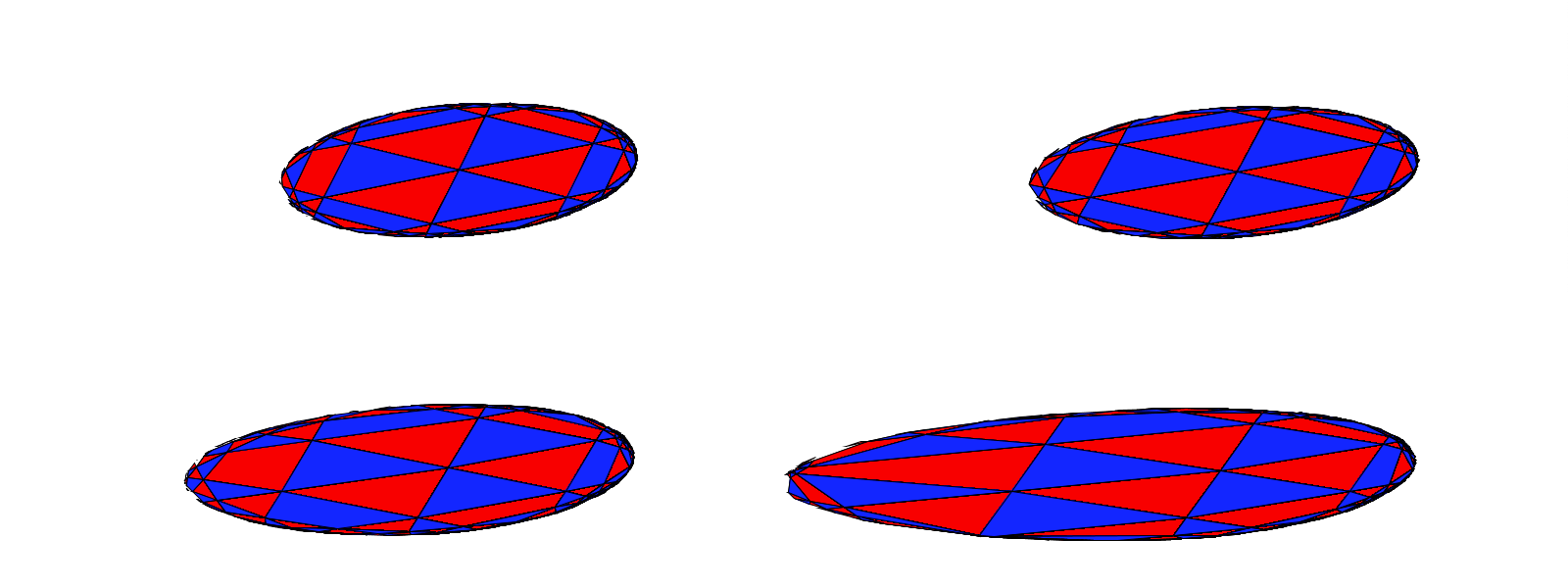}
	\end{center}
	\caption{Totally geodesic slice $w = -2.2$ of $\Omega_{t}$ for different values of $t$.  Reading left to right, top to bottom, the values of $t$ are $4.0, 4.13, 4.325$ and $4.75$.  Note that as $t$ deforms, the parallelism classes of the ideal tetrahedra do as well.}\label{N02200deform}
\end{figure}
Figure \ref{N0060deform} shows a fixed geodesic slice, $w = -0.6$, undergoing deformation.  This is a slice closer to the boundary of the Klein model $\Omega_{4}$. 
\begin{figure}
	\begin{center}
	\includegraphics[scale=0.225]{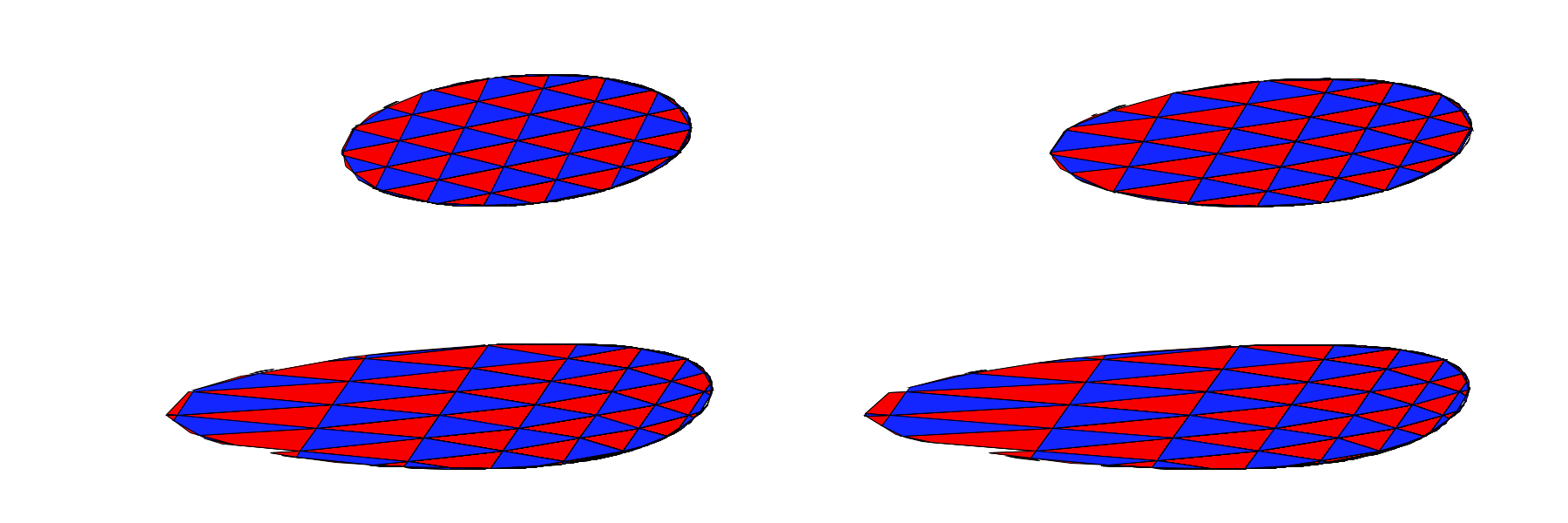}
	\end{center}
	\caption{Totally geodesic slice $w = -0.6$ of $\Omega_{t}$ for different values of $t$.  Reading left to right, top to bottom, the values of $t$ are $4.0, 4.275, 4.595$ and $4.75$.}\label{N0060deform}
\end{figure}
Finally in Figure \ref{t45800} we provide several different heights for values of $w$ slicing through $\Omega_{4.58}$.  Note we as slice through the various $w$, and particularly as we draw closer to a cusp of $\Omega_{4.58}$, one can see the parallelism classes of the edges of the ideal tetrahedra are unpreserved and distorted.  Contrast this behavior with Figure \ref{t4000}.  
\begin{figure}
	\begin{center}
	\includegraphics[scale=0.225]{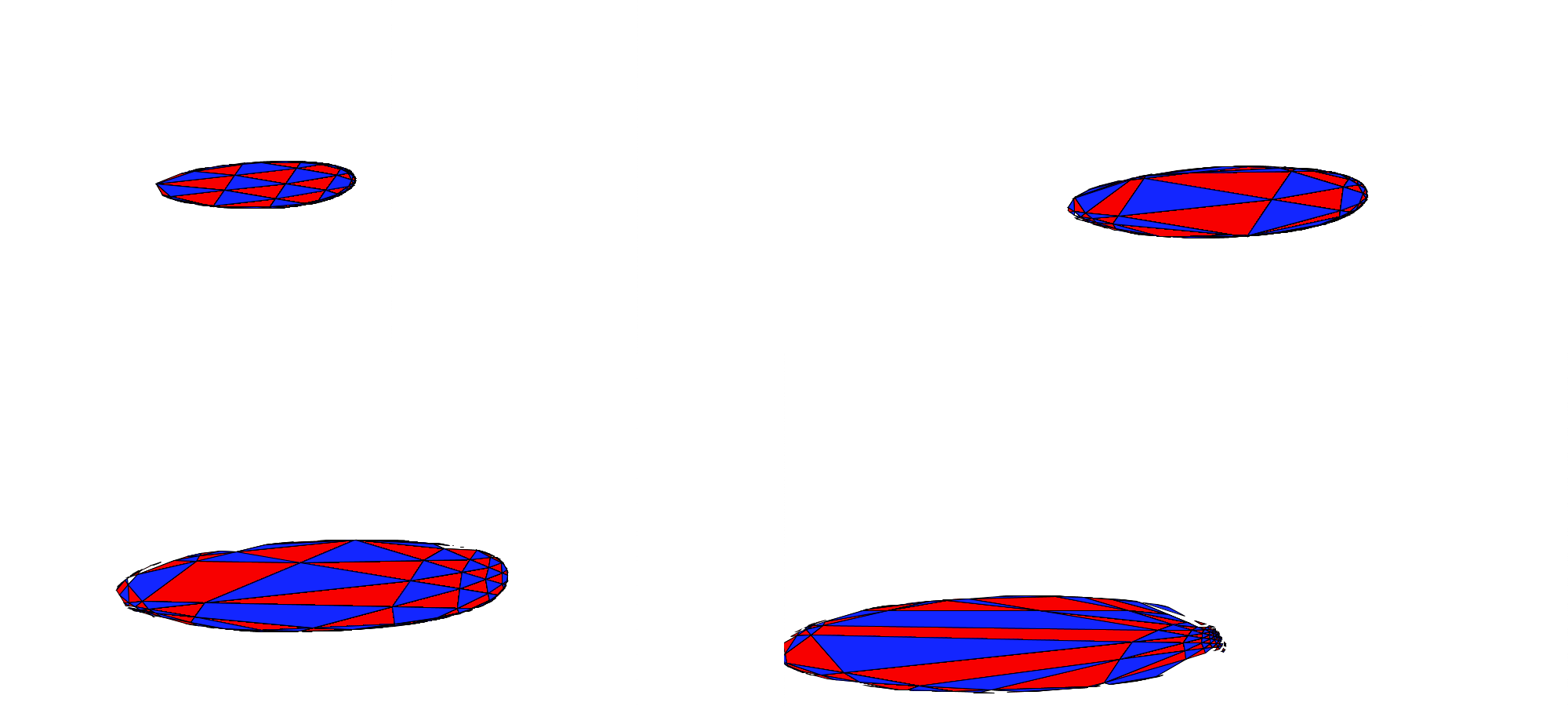}
	\end{center}
	\caption{Totally geodesic slices of $\Omega_{4.58}$ at different values of $w$.  Reading left to right, top to bottom, the values of $w$ are $-1.75,-4.35,-9.05,$ and $-18.20$.}\label{t45800}
\end{figure}
It is worth mentioning we did not generate these ideal tetrahedra in any sophisticated manner.  We simply generated about $35,000$ random words in $\rho_{t}(x)$ and $\rho_{t}(y)$ of length at most $15$ and depicted the results.  Had we generated these pictures using more elaborate search algorithms, it is likely we could avoid the gaps as most appreciably seen in Figure \ref{t45800}, however we simply wanted to provide a program that is quick and illustrates the tiling sufficiently well.  
\end{subsection}
\end{section}

\begin{section}{Cohomology of the Figure-Eight Knot}\label{secchmfig8}
In this section we study the cohomology the figure-eight knot complement $M$ twisted by the representation $\rho_{u} \de \pi_{1}(M) \lra \Aut(\vv)$ by using the fiber-bundle structure $F \lra M \lra S^{1}$ where $F$ is a once-punctured torus.  We prove that non-trivial projective deformations of the figure-eight knot complement induce non-trivial projective deformations of the once-punctured torus fiber.  Moreover, we show that for $u \in U$ sufficiently close to the complete hyperbolic structure on $M$, that non-trivial deformations of the figure-eight knot complement induce non-trivial deformations of the longitude of the boundary torus.  
\begin{subsection}{Cohomology of the circle}
Let $M$ denote the figure-eight knot complement, and let $x$ and $l$ denote a meridian and longitude of the boundary torus $\del M$ respectively.  In terms of our generators in Equation \ref{wirt}, if we define $a = xy^{-1}x^{-1}y$ and $b = xy^{-1}$, we may take $l = b^{-1}a^{-1}ba$.  This expression for $l$ hints at the fact that it is homologically trivial in $H_{1}(M)$.  Observe that $x$ and $l$ generate the fundamental group of the boundary torus $\del M = T^{2}$ and we are interested in calculating the cohomology groups $H^{*}(x;V)$ and $H^{*}(l;V)$.  The following lemma assists our direction.  
\begin{lemma}\label{circcohom}
Let $\rho: \langle x\rangle  \lra \Aut(V)$ be a representation of the infinite cyclic group $\langle x \rangle$ into a finite dimensional vector space $V$.  Then $H^{0}(\langle x \rangle ; V)$ is naturally isomorphic to the fixed points $V^{x}$ whereas $H^{1}(\langle x \rangle; V)$ is naturally isomorphic to the co-invariants $V_{x} := V/\emph{im} (1-x)$.  
\end{lemma}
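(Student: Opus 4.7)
The plan is to invoke Lemma \ref{lemcrossediso}, which identifies $H^{0}(\langle x\rangle; V)$ with the invariants $V^{\rho(\langle x\rangle)}$ and $H^{1}(\langle x\rangle; V)$ with crossed-homomorphisms modulo co-boundaries. The zero-dimensional case is immediate: since $\langle x\rangle$ is generated by $x$, a vector is fixed by the entire group if and only if it is fixed by $x$, so $V^{\rho(\langle x\rangle)} = V^{x}$ as claimed.

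For the first cohomology, I would establish a bijection between crossed-homomorphisms $f \de \langle x\rangle \lra V$ and elements of $V$ via $f \mapsto f(x)$. Given any $v \in V$, the cocycle identity $f(ab) = f(a) + a\,f(b)$ forces $f(1) = 0$ and $f(x^{n}) = (1 + x + x^{2} + \cdots + x^{n-1})\,v$ for $n \geq 1$, together with $f(x^{-n}) = -x^{-n}(1 + x + \cdots + x^{n-1})\,v$ for $n \geq 1$. The content of the argument is to verify that this formula defines a bona fide crossed-homomorphism on all of $\langle x\rangle$, which amounts to a short case-check on signs in the identity $f(x^{m+n}) = f(x^{m}) + x^{m} f(x^{n})$ for $m,n \in \Z$.

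Under this bijection, the co-boundary condition $f(a) = (1-a)\,w$ evaluated at $a = x$ becomes $f(x) = (1-x)\,w$; conversely, any such relation on $f(x)$ extends to a genuine co-boundary by the uniqueness above. Hence the co-boundaries correspond exactly to $\im(1-x) \subset V$, yielding $H^{1}(\langle x\rangle; V) \cong V / \im(1-x) = V_{x}$. The only real obstacle is the bookkeeping in this well-definedness check; alternatively, the entire lemma drops out more cleanly from the two-term free resolution $0 \lra \Z[\langle x\rangle] \xrightarrow{1-x} \Z[\langle x\rangle] \lra \Z \lra 0$ of $\Z$ over the group ring, which after applying $\Hom_{\Z[\langle x\rangle]}(-,V)$ reduces the computation to taking the kernel and cokernel of $1-x$ acting on $V$.
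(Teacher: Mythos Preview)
Your proof is correct. The paper itself does not prove this lemma; it simply cites Brown \cite[Section III.1 Example 1]{Brown1982Cohomology} and moves on. Both of your approaches---the explicit crossed-homomorphism bookkeeping via Lemma~\ref{lemcrossediso} and the two-term free resolution $0 \to \Z[\langle x\rangle] \xrightarrow{1-x} \Z[\langle x\rangle] \to \Z \to 0$---are standard, and the latter is essentially what Brown does in the cited passage.
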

A proof of this can be found in any introductory text on group cohomology such as \cite[Section III.1 Example 1]{Brown1982Cohomology}.  In a typical abuse of notation we frequently write $H^{*}(x;V)$ instead of $H^{*}(\langle x \rangle ; V)$.  Note Lemma \ref{circcohom} yields that $H^{0}(x; V)$ and $H^{1}(x; V)$ have the same dimension, whereas all other cohomology groups vanish because $H^{*}(x; V) \simeq H^{*}(S^{1}; V)$ and $S^{1}$ is one-dimensional.  \\
\\
In what follows, we pay particular attention to our representation of $x \in \pi_{1}(M)$ into $\PSL(2,\C)$ as in Equation \ref{slmats} 
\begin{equation*}
\rho_{u}(x) = \left(
\begin{array}{cc}
\frac{1}{\sqrt{z_1(1-z_2)}} & -\frac{1}{\sqrt{z_1(1-z_2)}}  \\
0 & \sqrt{z_1(1-z_2)}
\end{array}
\right)  
\end{equation*}
however this discussion applies more broadly to the case where $\rho_{u}(x)$ is a non-trivial loxodromic element of $\SL(2,\C)$.  The eigenvalues of $x$ are $\pm \sqrt{z_{1}(1-z_{2})}$.  We wish to study the eigenvalues of the corresponding automorphism of $\vv$ under the adjoint representation of $\PGL(4,\R)$.  We know that in $\PSO(3,1)$, up to conjugation, the representation of $x$ away from the parabolic point is
\begin{equation}\label{psorep}
\left(
\begin{array}{cccc}
\cos(\theta) & -\sin(\theta) & 0 & 0  \\
\sin(\theta) & \cos(\theta) & 0 & 0  \\
0 & 0 & \cosh(c) & \sinh(c)  \\
0 & 0 & \sinh(c) & \cosh(c)  \\
\end{array}
\right)  
\end{equation}
where 
\begin{equation*}
u = \Log z_{1}(1-z_{2}) = \Log | z_{1}(1-z_{2})| + i\arg  z_{1}(1-z_{2}) = c + i\theta
\end{equation*}
defines $c$ and $\theta$ by the real and imaginary parts of $u$ respectively.  That is, $e^{c}$ is the hyperbolic translations length along the fixed geodesic corresponding the hyperbolic transformation $\rho_{u}(x)$ in Equation \ref{slmats}, and $\theta$ is the angle of rotation about this fixed geodesic.  The matrix representation in Equation \ref{psorep} follows from taking our element $\rho_{u}(x) \in \PSL(2,\C)$ and diagonalizing as in \cite[Section 2.2]{Cooper2006Computing}.  This can be done so long as $z_{1}(1-z_{2}) \neq 1$ which is precisely the condition that the representation is not the complete parabolic one.  The eigenvalues of the matrix in Equation \ref{psorep} are equal to $e^{i\theta}, e^{-i\theta}, e^{c}$, and $e^{-c}$.  \\
\\
We consider the action of the matrix in Equation \ref{psorep} on $\vv$ relative to the basis below. 
\begin{align*}\label{eigenbasisx}
w_{1} &=\left(
\begin{array}{cccc}
 0 & 0 & 0 & 0 \\
 0 & 0 & 0 & 0 \\
 0 & 0 & -1 & -1 \\
 0 & 0 & 1 & 1 \\
\end{array}
\right) \phantom{=}
w_{2} = \left(
\begin{array}{cccc}
 0 & 0 & -1 & -1 \\
 0 & 0 & 0 & 0 \\
 -1 & 0 & 0 & 0 \\
 1 & 0 & 0 & 0 \\
\end{array}
\right) \phantom{=}
w_{3} = \left(
\begin{array}{cccc}
 0 & 0 & 0 & 0 \\
 0 & 0 & -1 & -1 \\
 0 & -1 & 0 & 0 \\
 0 & 1 & 0 & 0 \\
\end{array}
\right) \\
w_{4} &= \left(
\begin{array}{cccc}
 -1 & 0 & 0 & 0 \\
 0 & -1 & 0 & 0 \\
 0 & 0 & 1 & 0 \\
 0 & 0 & 0 & 1 \\
\end{array}
\right) \phantom{=}
w_{5} = \left(
\begin{array}{cccc}
 0 & 1 & 0 & 0 \\
 1 & 0 & 0 & 0 \\
 0 & 0 & 0 & 0 \\
 0 & 0 & 0 & 0 \\
\end{array}
\right) \phantom{=}
w_{6} = \left(
\begin{array}{cccc}
 -1 & 0 & 0 & 0 \\
 0 & 1 & 0 & 0 \\
 0 & 0 & 0 & 0 \\
 0 & 0 & 0 & 0 \\
\end{array}
\right) \\
w_{7} &= \left(
\begin{array}{cccc}
 0 & 0 & 0 & 0 \\
 0 & 0 & 0 & 0 \\
 0 & 0 & 1 & -1 \\
 0 & 0 & 1 & -1 \\
\end{array}
\right) \phantom{=}
w_{8} =\left(
\begin{array}{cccc}
 0 & 0 & 0 & 0 \\
 0 & 0 & 1 & -1 \\
 0 & 1 & 0 & 0 \\
 0 & 1 & 0 & 0 \\
\end{array}
\right) \phantom{=}
w_{9} = \left(
\begin{array}{cccc}
 0 & 0 & -1 & 1 \\
 0 & 0 & 0 & 0 \\
 -1 & 0 & 0 & 0 \\
 -1 & 0 & 0 & 0 \\
\end{array}
\right)
\end{align*}
Relative to this basis the action of $\Ad \rho_{u}(x)$ on $\vv$ is given by
\begin{equation}\label{xactioneig}
\left(
\begin{array}{ccccccccc}
 e^{-2 c} & 0 & 0 & 0 & 0 & 0 & 0 & 0 & 0 \\
 0 & e^{-c} \cos (\theta) & -e^{-c} \sin (\theta)  & 0 & 0 & 0 & 0 & 0 & 0 \\
 0 & e^{-c} \sin (\theta) & e^{-c} \cos (\theta) & 0 & 0 & 0 & 0 & 0 & 0 \\
 0 & 0 & 0 & 1 & 0 & 0 & 0 & 0 & 0 \\
 0 & 0 & 0 & 0 & \cos (2 \theta) & -\sin (2 \theta) & 0 & 0 & 0 \\
 0 & 0 & 0 & 0 & \sin (2 \theta) & \cos (2 \theta) & 0 & 0 & 0 \\
 0 & 0 & 0 & 0 & 0 & 0 & e^{2 c} & 0 & 0 \\
 0 & 0 & 0 & 0 & 0 & 0 & 0 & e^c \cos (\theta) & -e^c \sin (\theta) \\
 0 & 0 & 0 & 0 & 0 & 0 & 0 & e^c \sin (\theta) & e^c \cos (\theta) \\
\end{array}
\right)
\end{equation}
This above representation yields the eigenvalues of $\rho_{u}(x)$; they are given by 
\begin{equation}\label{eigofadx}
e^{-2c},e^{-c+i\theta}, e^{-c-i\theta}, 1, e^{2i\theta}, e^{-2i\theta}, e^{2c}, e^{c+i\theta}, e^{c-i\theta}
\end{equation}
Note that the above calculates the eigenvalues for any non-trivial loxodromic element of $\PSO(3,1)$ where $e^{c}$ and $e^{-c}$ are the hyperbolic translations length along the fixed geodesic and $\theta$ is the argument of the rotation along the fixed geodesic.  \\
\\
In the case where $u = 0$, $\rho_{0}(x) \in \PSL(2,\C)$ is parabolic.  The Jordan normal form of $\rho_{0}(x)$ in $\PSO(3,1)$ is
\begin{equation*}
\left(
\begin{array}{cccc}
 1 & 1 & 0 & 0 \\
 0 & 1 & 1 & 0 \\
 0 & 0 & 1 & 0 \\
 0 & 0 & 0 & 1 \\
\end{array}
\right)
\end{equation*}
A straightforward, albeit, tedious calculation shows that the Jordan normal form of $\Ad \rho_{0}(x)$ acting on $\vv$ is
\begin{equation}\label{PX}
\left(
\begin{array}{ccccccccc}
 1 & 0 & 0 & 0 & 0 & 0 & 0 & 0 & 0 \\
 0 & 1 & 1 & 0 & 0 & 0 & 0 & 0 & 0 \\
 0 & 0 & 1 & 1 & 0 & 0 & 0 & 0 & 0 \\
 0 & 0 & 0 & 1 & 0 & 0 & 0 & 0 & 0 \\
 0 & 0 & 0 & 0 & 1 & 1 & 0 & 0 & 0 \\
 0 & 0 & 0 & 0 & 0 & 1 & 1 & 0 & 0 \\
 0 & 0 & 0 & 0 & 0 & 0 & 1 & 1 & 0 \\
 0 & 0 & 0 & 0 & 0 & 0 & 0 & 1 & 1 \\
 0 & 0 & 0 & 0 & 0 & 0 & 0 & 0 & 1 \\
\end{array}
\right)
\end{equation}
These calculations are carried out in \texttt{PRFE\textunderscore Mathematica.nb} and in particular can be found in the variables \texttt{PspX} and \texttt{PX}.  Using the forms of Equation \ref{xactioneig} and \ref{PX} acting on $\vv$, we expand upon a specific case as proven in \cite[Lemma 4.2 \& 4.4]{Heusener2011Infinitesimal}.  
\begin{lemma}\label{circdim}
Let $M$ be the figure-eight knot complement and $\rho_{u}\de\pi_{1}(M) \lra \PSO(3,1)$ be a representation where $u \in U$ is an element of the hyperbolic Dehn filling space.  For each representation $\Ad \rho_{u}\de\pi_{1}(M) \lra \Aut(\vv)$ the dimension of $H^{*}(x;\vv_{\Ad \rho_{u}})$ is either 1, 3, or 5.  In hyperbolic Dehn filling space, the subset all of representations for which the dimension is equal to 1 is open, whereas the subset of all representations for which the dimension is equal to 5 are discrete isolated points.  The subsets for which the dimension is equal to 3 is neither open nor closed, but are locally topologically equivalent to a curve, or, the crossing of two curves.  The representation $\rho_{0}$ for example lies in the intersection of two curves.  
\end{lemma}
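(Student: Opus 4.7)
The plan is to reduce the question to counting the $1$-eigenspace dimension of the linear map $\Ad \rho_u(x)$ on $\vv$, and then to analyze the two regimes, the loxodromic $(u \neq 0)$ and the parabolic $(u = 0)$ cases, separately. By Lemma \ref{circcohom}, $H^0(x; \vv_{\Ad \rho_u}) \cong \vv^{\Ad \rho_u(x)}$ and $H^1(x; \vv_{\Ad \rho_u}) \cong \vv / \im(1 - \Ad \rho_u(x))$ have a common finite dimension $d(u) := \dim \ker(\Ad \rho_u(x) - I)$, so it suffices to show $d(u) \in \{1,3,5\}$ and describe the loci of each value in Dehn filling space.

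For $u \neq 0$, the element $\rho_u(x) \in \PSO(3,1)$ is semisimple, so $\Ad \rho_u(x)$ is diagonalizable with the nine eigenvalues listed in Equation \ref{eigofadx}. Writing $u = c + i\theta$, I would count the multiplicity of the eigenvalue $1$ among $e^{-2c},\, e^{-c \pm i\theta},\, 1,\, e^{\pm 2i\theta},\, e^{2c},\, e^{c \pm i\theta}$: the fourth entry contributes $1$ always; the pair $e^{\pm 2c}$ contributes $2$ exactly when $c = 0$; the pair $e^{\pm 2i\theta}$ contributes $2$ exactly when $\theta \in \pi \Z$; and the four entries $e^{\pm c \pm i\theta}$ can contribute only when $c = 0$ and $\theta \in 2\pi \Z$, which would force $\rho_u(x) = I$ and is excluded in a neighborhood of the ideal point. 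Combining cases yields $d(u) = 1$ when neither $c = 0$ nor $\theta \in \pi \Z$, $d(u) = 3$ when exactly one of these holds, and $d(u) = 5$ when $c = 0$ and $\theta \in \pi + 2\pi \Z$.

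For $u = 0$, the element $\rho_0(x)$ is non-trivial unipotent and the diagonalization approach fails; I would instead invoke the explicit Jordan normal form of Equation \ref{PX}, a symbolic calculation carried out in \texttt{PRFE\underscore Mathematica.nb}, which exhibits $\Ad \rho_0(x)$ as a direct sum of Jordan blocks of sizes $1, 3, 5$ on $\vv$. Since $d(0)$ is the number of Jordan blocks with eigenvalue $1$, we obtain $d(0) = 3$. The topological picture in Dehn filling space now follows: $c$ and $\theta$ are real-analytic coordinates on the two-dimensional deformation space, the conditions $c = 0$ and $\theta \in \pi\Z$ each define embedded curves, their complement is open and gives the $d = 1$ locus, the union of the curves together with the parabolic point at their crossing $u = 0$ is the $d = 3$ locus (so $\rho_0$ lies at the crossing of two curves, with $d = 3$ by the Jordan-form computation), and the transverse crossings of $c = 0$ with $\theta = (2k+1)\pi$ constitute the discrete $d = 5$ locus. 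The chief obstacle is precisely the reconciliation at $u = 0$: the naive limit $c, \theta \to 0$ in the eigenvalue list would predict $d(0) = 9$, which fails because $\Ad \rho_0(x)$ is not semisimple, so the substantive content of the argument is the explicit verification that the Jordan partition at the parabolic point is exactly $(1, 3, 5)$, yielding $d(0) = 3$ rather than $5$ or $9$.
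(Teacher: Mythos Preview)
Your proposal is correct and follows essentially the same approach as the paper: both reduce via Lemma \ref{circcohom} to counting the $1$-eigenspace of $\Ad\rho_u(x)$ on $\vv$, both carry out the same case analysis on $(c,\theta)$ using the eigenvalue list in Equation \ref{eigofadx} for $u\neq 0$, and both invoke the Jordan form of Equation \ref{PX} at the parabolic point $u=0$ to get $d(0)=3$. Your packaging of the loxodromic case as a systematic eigenvalue count is slightly more streamlined than the paper's enumeration through the explicit matrix forms in Equations \ref{xhyplengthzero}--\ref{xelliptic}, but the content is identical.
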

\begin{proof}
To ease notation, we denote the $\pi_{1}(M)$-module $\vv$ associated to $\Ad \rho_{u}\de\pi_{1}(M) \lra \Aut(\vv)$ by $\vv_{u}$.  By Lemma \ref{circcohom}, the dimension of $H^{*}(x;\vv_{u})$ is equal to the dimension of the fixed points of the action $\Ad \rho_{u}(x) \de \vv \lra \vv$ because $x$ is infinite cyclic in $\pi_{1}(M)$.  Let $\rho_{u}\de\pi_{1}(M) \lra \Aut(\vv)$ be a representation where $u \in U$ is a point in hyperbolic Dehn filling space $U$.  By definition $u = \Log z_{1}(1-z_{2}) = c + i\theta$.  We proceed by cases.  For now, let us assume that $u \neq 0$, so either $c \neq 0$ or $\theta \neq 0$.  If $c$ is non-zero, and $\theta$ is not an integer multiple of $\pi$, then all the eigenvalues as seen in Equation \ref{eigofadx} are distinct because $e^{-c} < 1 < e^{c}$ and $e^{i\theta} \neq e^{-i\theta}$.  This condition is an open condition, as if we pick a $u \in U$ satisfying $c$ is non-zero and $\theta$ is not an integer multiple of $\pi$, we may find an entire neighborhood about $u$ satisfying this condition too.\\
\\
If $c = 0$ and $\theta$ is not an integer multiple of $\pi$, then we still have that $e^{i\theta} \neq e^{-i\theta}$.  In this case the matrix representation for $\Ad \rho_{u}(x)$ becomes
\begin{equation}\label{xhyplengthzero}
\left(
\begin{array}{ccccccccc}
 1 & 0 & 0 & 0 & 0 & 0 & 0 & 0 & 0 \\
 0 & \cos (\theta ) & -\sin (\theta ) & 0 & 0 & 0 & 0 & 0 & 0 \\
 0 & \sin (\theta ) & \cos (\theta ) & 0 & 0 & 0 & 0 & 0 & 0 \\
 0 & 0 & 0 & 1 & 0 & 0 & 0 & 0 & 0 \\
 0 & 0 & 0 & 0 & \cos (2 \theta ) & -\sin (2\theta ) & 0 & 0 & 0 \\
 0 & 0 & 0 & 0 & \sin (2 \theta ) & \cos (2 \theta ) & 0 & 0 & 0 \\
 0 & 0 & 0 & 0 & 0 & 0 & 1 & 0 & 0 \\
 0 & 0 & 0 & 0 & 0 & 0 & 0 & \cos (\theta ) & -\sin (\theta ) \\
 0 & 0 & 0 & 0 & 0 & 0 & 0 & \sin (\theta ) & \cos (\theta ) \\
\end{array}
\right)
\end{equation}
and we can clearly see that $x$ has precisely three fixed points.  In this case, $\dim H^{*}(x; \vv_{u}) = 3$.  Moreover, we have one-dimension of freedom to move $\theta$ and still avoid integer multiples of $\pi$ while keeping $c = 0$.\\
\\
If $c \neq 0$ and $\theta = \pi k$ for some $k \in \Z$, then we have $e^{-c} < 1 < e^{c}$ and $e^{2i\theta} = 1$.  Our matrix representation for $\Ad \rho_{u}(x)$ becomes 
\begin{equation}\label{xrotpi}
\left(
\begin{array}{ccccccccc}
 e^{-2 c} & 0 & 0 & 0 & 0 & 0 & 0 & 0 & 0 \\
 0 & e^{-c} \cos (\pi  k) & 0  & 0 & 0 & 0 & 0 & 0 & 0 \\
 0 & 0 & e^{-c} \cos (\pi  k) & 0 & 0 & 0 & 0 & 0 & 0 \\
 0 & 0 & 0 & 1 & 0 & 0 & 0 & 0 & 0 \\
 0 & 0 & 0 & 0 & 1 & 0& 0 & 0 & 0 \\
 0 & 0 & 0 & 0 & 0 & 1 & 0 & 0 & 0 \\
 0 & 0 & 0 & 0 & 0 & 0 & e^{2 c} & 0 & 0 \\
 0 & 0 & 0 & 0 & 0 & 0 & 0 & e^c \cos (\pi  k) & 0\\
 0 & 0 & 0 & 0 & 0 & 0 & 0 &0 & e^c \cos (\pi  k) \\
\end{array}
\right)
\end{equation}
Equation \ref{xrotpi} clearly shows that $x$ has precisely three fixed points as well.  Again in this case, $\dim H^{*}(x; \vv_{u}) = 3$ and similarly, we may fix $\theta$ and have one-dimension of freedom for $c$.\\
\\
For the final case, we consider when $c = 0$ and $\theta = \pi k$ for some $k \in \Z$.  The only two non-trivial choices depend on the parity of $k$ and are $c = 0$ and $\theta = 2j\pi$, in which $u = 0$, which corresponds to the parabolic case.  In this case $\dim H^{*}(x;\vv) = 3$, as clearly seen by the Jordan normal form of Equation \ref{PX}. \\
\\
If however we consider the odd multiple case where $c = 0$ and $\theta = (2j +1)\pi$, then our matrix representation for $\Ad \rho_{u}(x)$ becomes 
\begin{equation}\label{xelliptic}
\left(
\begin{array}{ccccccccc}
 1 & 0 & 0 & 0 & 0 & 0 & 0 & 0 & 0 \\
 0 & -1 & 0 & 0 & 0 & 0 & 0 & 0 & 0 \\
 0 & 0 & -1 & 0 & 0 & 0 & 0 & 0 & 0 \\
 0 & 0 & 0 & 1 & 0 & 0 & 0 & 0 & 0 \\
 0 & 0 & 0 & 0 & 1 & 0 & 0 & 0 & 0 \\
 0 & 0 & 0 & 0 & 0 & 1 & 0 & 0 & 0 \\
 0 & 0 & 0 & 0 & 0 & 0 & 1 & 0 & 0 \\
 0 & 0 & 0 & 0 & 0 & 0 & 0 & -1 & 0 \\
 0 & 0 & 0 & 0 & 0 & 0 & 0 & 0 & -1 \\
\end{array}
\right)
\end{equation}
which has five independent fixed points, and thus $\dim H^{*}(x; \vv_{u}) = 5$.  Note both of these cases are discrete without any degrees of freedom.\\
\\
As these cases exhaust all possible options in the hyperbolic Dehn filling space, we have proven the claim regarding the dimensions of each $H^{*}(x; \vv_{u})$.  The topological claims about the dimensions of the set of all representations of a fixed dimension follow from how we partitioned the cases.  In particular, $u = 0$, is the intersection of the two topological curves $\{u = i\theta \in U  |  |\theta| < \epsilon \}$ with $\{u = c \in U  |  |c| < \epsilon \}$.   
\end{proof}
\end{subsection}

\begin{subsection}{Fiber Bundle Cohomology}
Recall that the figure-eight knot complement $M$ is a once-punctured torus bundle over a circle.  Topologically, we have $F \lra M \lra S^{1}$ where $F$ is a once-punctured torus.  Instead of using the presentation in Equation \ref{wirt}, we may instead consider the fiber-bundle presentation given by
\begin{equation}\label{fibpres}
\pi_{1}(M) = \langle a,b,x \, | \, xax^{-1} = ba \text{ and } xbx^{-1} = bab \rangle
\end{equation}
This presentation can be recovered from the relation in Equation \ref{wirt} given by $ax = ya$.  The elements $a,b$ generate a free group of order two topologically corresponding to the once-punctured torus.  The quotient $\pi_{1}(M)/\langle a,b \rangle$ is cyclic and generated by the coset $\overline{x}$ for example.  The group theoretic analogue of the short exact sequence of topological spaces is
\begin{equation*}
1 \lra \langle a,b \rangle \lra \pi_{1}(M) \lra \langle \overline{x} \rangle \lra 1
\end{equation*}
Let $u \in U$ be in hyperbolic Dehn filling space, and consider the representation $\Ad \rho_{u} \de \pi_{1}(M) \lra \Aut(\vv)$.  Let $\vv_{u}$ denote the associated $\pi_{1}(M)$-module to this representation.  We invoke the Inflation-Restriction sequence via the Lydon-Hochschild-Serre spectral sequence to obtain a short exact sequence \cite[Section VII.6]{Brown1982Cohomology}.  
\begin{equation}\label{InfResSeq}
0 \lra H^{1}(\overline{x}; \vv_{u}^{F_{2}}) \lra H^{1}(\pi_{1}(M); \vv_{u}) \lra H^{1}(F_{2}; \vv_{u})^{\overline{x}} \lra H^{2}(\overline{x}; \vv_{u}^{F_{2}}) \lra \hdots
\end{equation}
Because $\overline{x}$ is the fundamental group of a circle, it has the homotopy type of a 1-complex, and so $H^{*}(\overline{x}; \vv_{u})$ vanishes in dimensions larger than 1.  Additionally, $\vv_{u}^{F_{2}}$ corresponds to the fixed points of the adjoint representation restricted to $\rho_{u}(F_{2})$.  Provided that $u \in U$ corresponds to a hyperbolic structure with cone or Dehn surgery type singularities, the fixed points are trivial so $\vv_{u}^{F_{2}} = 0$ \cite[Section 3.15]{Hodgson}.  Thus, Equation \ref{InfResSeq} reduces to
\begin{equation}\label{RedSeq}
0  \lra H^{1}(\pi_{1}(M); \vv_{u}) \lra H^{1}(F_{2}; \vv_{u})^{\overline{x}} \lra 0
\end{equation}
which means that $H^{1}(\pi_{1}(M);\vv_{u})$ is isomorphic to the fixed points of $H^{1}(F_{2}; \vv_{u})$ being acted upon by $\overline{x}$.  \\
\\
Before proceeding we briefly introduce relevant notation.  Let $\rho\de G \lra \Aut(V)$ be a representation of a group $G$ over a finite-dimensional vector space $V$.  For each subgroup $H \leq G$, we obtain a natural group homomorphism from $H^{*}(G; V) \lra H^{*}(H; V)$ simply by restriction.  We denote this map by $\res_{H}$.  The topological analogue arises from the induced map on cohomology via the inclusion map.  In the case where $H$ is cyclic and generated by $h$, we frequently denote this map by $\res_{h}$.  With the appropriate notation we may now state a result of Equation \ref{RedSeq}.
\begin{lemma}\label{injF2}
Let $i: F \lra M$ denote the embedding of a once-punctured torus arising from the fiber bundle $F \lra M \lra S^{1}$.  Then for any $u \in U$ in hyperbolic Dehn surgery space, the induced map on cohomology $H^{1}(M;\vv_{u}) \lra H^{1}(F;\vv_{u})$ is injective.  Moreover, the subset of all $u \in U$ for which $\dim H^{1}(M; \vv_{u}) = 1$ is an open subset.  
\end{lemma}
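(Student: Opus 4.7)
The injectivity is immediate from Equation \ref{RedSeq}: the isomorphism $H^{1}(\pi_{1}(M); \vv_{u}) \xrightarrow{\sim} H^{1}(F_{2}; \vv_{u})^{\overline{x}}$ displayed there is the restriction homomorphism appearing in the Hochschild--Serre five-term exact sequence, and under the identification between group cohomology and topological cohomology with local coefficients (valid because $M$ and $F$ are aspherical, Section \ref{subsectwistedgroup}) it agrees with the pullback $i^{*}$. Since the inclusion of $\overline{x}$-invariants into $H^{1}(F; \vv_{u})$ is tautologically injective, so is $i^{*}$.

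For the openness claim, I first establish that $\dim H^{1}(F; \vv_{u}) = 9$ independent of $u$. Since $\pi_{1}(F) = F_{2}$ is free of rank two and $\vv_{u}^{F_{2}} = 0$ throughout $U$ (as used in the derivation of Equation \ref{RedSeq}), the standard two-term free resolution of $\Z$ over $\Z[F_{2}]$ gives $\dim H^{1}(F; \vv_{u}) = \dim \vv + \dim \vv_{u}^{F_{2}} = 9$. Equation \ref{RedSeq} then realizes $H^{1}(M; \vv_{u})$ as the kernel of $T_{u} - I$ acting on this $9$-dimensional space, where $T_{u}$ is the monodromy action of $\overline{x}$. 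Because $T_{u}$ depends rationally on $u$ through the formulas of Section \ref{kleinmodelrep}, the kernel dimension is upper semicontinuous in $u$, so $\{u \in U : \dim H^{1}(M; \vv_{u}) \leq 1\}$ is open.

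The main obstacle will be upgrading this to openness of the locus $\{u : \dim H^{1}(M; \vv_{u}) = 1\}$; that is, ruling out that the dimension drops to $0$ at a nearby point. I plan to do this by exploiting the explicit one-parameter family of projective deformations due to Ballas described in Section \ref{ssballas}. That family furnishes mutually non-conjugate representations in $\PGL(4,\R)$ arbitrarily close to the complete hyperbolic structure and transverse to the hyperbolic Dehn-surgery curve; by the contrapositive of Weil's Theorem \ref{thmweilrigidity} this forces $H^{1}(M; \vv_{0}) \neq 0$, and a persistence-of-deformation argument (or a direct check using the explicit matrices of Section \ref{kleinmodelrep}) propagates the lower bound $\dim H^{1}(M; \vv_{u}) \geq 1$ to a neighborhood of the complete point. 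Combined with upper semicontinuity, this pins the dimension at exactly $1$ on an open neighborhood and completes the proof.
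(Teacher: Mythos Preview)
Your treatment of injectivity is fine and matches the paper's. The issue is in the openness argument, specifically the lower bound $\dim H^{1}(M;\vv_{u})\geq 1$.

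The lemma asserts that $\{u:\dim H^{1}(M;\vv_{u})=1\}$ is open in $U$, i.e.\ for \emph{every} $u_{0}$ with $\dim=1$ there is a neighborhood on which the dimension stays equal to $1$. Upper semicontinuity handles the inequality $\leq 1$ near such a point, but you still need $\geq 1$ near $u_{0}$. Your argument via the Ballas family only shows $H^{1}(M;\vv_{0})\neq 0$ at the complete point and then attempts to ``propagate'' to a neighborhood of $0$. This does not address arbitrary $u_{0}$, so even if it succeeded it would prove a weaker statement than the lemma. Moreover, the propagation step itself is not justified: lower bounds on kernel dimension are \emph{not} preserved under perturbation, and the existence of a nontrivial projective deformation at $u=0$ says nothing a priori about $u$ nearby on the Dehn-surgery curve---those are different representations of $\pi_{1}(M)$ and Ballas's one-parameter family is built at the complete structure only.

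The paper closes this gap by invoking the Half-Lives Half-Dies theorem (Heusener--Porti, Corollary~5.4): Poincar\'e--Lefschetz duality with the Killing-form pairing on $\vv$ forces the image of $H^{1}(M;\vv_{u})\to H^{1}(\partial M;\vv_{u})$ to be exactly half-dimensional, hence $H^{1}(M;\vv_{u})\neq 0$ for \emph{every} $u\in U$. This is the uniform lower bound you are missing; once you have it, upper semicontinuity finishes the proof exactly as you outlined.
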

\begin{proof}
Equation \ref{RedSeq} holds for any $u \in U$ in hyperbolic Dehn filling space, thus we obtain that $\res_{F}: H^{1}(M;\vv_{u}) \lra H^{1}(F; \vv_{u})$ is injective.  The fact that the set of all $u \in U$ for which $H^{1}(M; \vv_{u})$ is one-dimensional is an open subset of hyperbolic Dehn filling space follows because $H^{1}(M; \vv_{u})$ is non-zero by the `Half-Lives Half-Dies Theorem' \cite[Corollary 5.4]{Heusener2011Infinitesimal}.  However, having expressed $H^{1}(M; \vv_{u})$ as the fixed point set of $H^{1}(F; \vv_{u})$ under the action of $\overline{x}$ on this 9-dimensional vector space means by semi-continuity the dimension of $H^{1}(M; \vv_{u})$ is non-increasing.  Thus for every point $u \in U$ for which $\dim H^{1}(M; \vv_{u}) = 1$, there is a neighborhood about $u$ for which $\dim H^{1}(M; \vv_{u})$ is one-dimensional.  
\end{proof}
One can interpret the lemma in the following manner.  If we consider a deformation of the representation $\Ad \rho_{u}: \pi_{1}(M) \lra \Aut(\vv)$ and its corresponding element in $H^{1}(M;\vv_{u})$, then Lemma \ref{injF2} states that one cannot deform $\Ad \rho_{u}$ without deforming the representation of the fundamental group $\pi_{1}(F)$ generated by the once-punctured torus.  This is the analogue of projective rigidity defined for $\del M$, but applied to the fiber of the sequence $F \lra M \lra S^{1}$.   
\end{subsection}

\begin{subsection}{Induced Action on the Once-Punctured Torus}\label{oncepuncturedaction}
In this section we calculate the induced action of $\overline{x}$ on $H^{1}(F; \vv_{u})$ where $\vv_{u}$ is the $\pi_{1}(M)$-module associated to the representation $\Ad \rho_{u}: \pi_{1}(M) \lra \Aut(\vv)$.  Throughout this section $M$ will denote the figure-eight knot complement, and $F$ will denote a once-punctured torus fiber whose fundamental group is the free group of rank 2, generated by $a$ and $b$.  Here we use the presentation of $\pi_{1}(M)$ given by Equation \ref{fibpres}.  \\
\\
We first consider $H^{1}(F_{2}; \vv_{u})$.  Because the universal cover of $F$ is contractible, we may relate the cohomology of $H^{*}(F_{2}; \vv_{u})$ with the topological cohomology of $H^{*}(F; \vv_{u})$ where $\vv$ is twisted by the representation $\Ad \rho_{u}$.  Because $F$ is homotopically equivalent to the wedge of two circles, we may use the topological chain-complex obtained from the universal cover of $S^{1}\vee S^{1}$.  Its universal cover is the Cayley graph of $F_{2}$, the infinite 4-valent tree whose edges are generated by $E = \{a,a^{-1},b,b^{-1}\}$ and whose vertices are generated by $1$.  As complexes, these are given by  
\begin{equation}\label{chaincomplexwedge}
0 \lra \Z[F_{2} \times S] \xrightarrow{\del} \Z[F_{2}]  \lra 0
\end{equation}
where $F_{2}$ acts on $F_{2}\times S$ via $h(g,s) = (hg,s)$ and where the boundary map acts on a basis element by $\del(g,s) = g - gs$.  Straight forward calculation shows that $\del$ is $F_{2}$-equivariant.  Applying the functor $\Hom_{\Z[F_{2}]}(\bullet, V)$ to Equation \ref{chaincomplexwedge} yields the sequence
\begin{equation}\label{cochaincomplexwedge}
0 \lra \Hom_{\Z[F_{2}]}\left(\Z[F_{2}], \vv_{u} \right) \xrightarrow{\delta} \Hom_{\Z[F_{2}]}\left(\Z[F_{2}\times S], \vv_{u} \right)   \lra 0
\end{equation}
Note that each $\Z[F_{2}]$-map from $\Z[F_{2}]$ into $\vv_{u}$ is determined by where $1$ is sent to, thus up to isomorphism, the first term of Equation \ref{cochaincomplexwedge} is isomorphic to $\vv_{u}$ as $\Z[F_{2}]$-modules.  Similarly each $\Z[F_{2}]$-map from $\Z[F_{2}\times S]$ into $\vv_{u}$ is determined by where $(1,a)$ and $(1,b)$ are sent to, and thus the second term is isomorphic to $\vv_{u} \times \vv_{u}$ as $\Z[F_{2}]$-modules.  The boundary map $\del$ dualizes to $\delta$ taking $\delta(f)(g):=f(\del (g))$ where $f \in \Hom_{\Z[F_{2}]}(\Z[F_{2}], \vv_{u})$.  As it is determined by its action on $(1, a)$ and $(1, b)$, we see 
\[
\delta(f)(1, a) = f(\del(1, a)) = f(1-a) = (1-a)f(1)
\]
and similarly so for $\delta(f)$ applied to $(1, b)$.  Substituting the relevant isomorphisms we see Equation \ref{chaincomplexwedge} becomes
\begin{equation}\label{cochainwedgeV}
0 \lra \vv_{u} \xrightarrow{i} \vv_{u} \times \vv_{u} \lra 0 \text{ where }i(v) = \left((1-a)v,(1-b)v\right)
\end{equation}
Provided the map $i$ as defined in Equation \ref{cochainwedgeV} is injective, as is the case when $u \in U$, we then have that $H^{1}(F; \vv_{u})$ is 9-dimensional. \\
\\
We conclude this section by exploring the induced action of $\overline{x}$ acting on $F$ where $F$ is the once-punctured torus fiber of the bundle $F \lra M \lra S^{1}$.  Recall the presentation of $\pi_{1}(M)$ as presented in Equation \ref{fibpres}.  As done in Section \ref{subsectwistedgroup}, we identify $H^{1}(F_{2};\vv_{u})$ as the crossed homomorphisms $f\de F_{2} \lra \vv$ satisfying $f(uv) = f(u) + uf(v)$ for all $u, v \in F_{2}$, modulo the coboundaries.
\\
\\
Because $F_{2}$ has no relations, such a crossed homomorphism is specified by a single pair of vectors $(v,w) \in \vv\times \vv$.  These vectors correspond to the crossed homomorphisms values on the generators $a$ and $b$ of $F_{2}$.  The group $\pi_{1}(M)$ acts on these crossed homomorphisms in the following manner.  Let $f(a) = v$ and $f(b) = w$.  The element $g \in \pi_{1}(M)$ acts on $F_{2}$ via conjugation, as $F_{2}$ is normal.  We define a new crossed homomorphism $gf:\pi_{1}(M) \lra \vv$ via $(gf)(h) = gf(g^{-1}hg)$ for each $h \in F_{2}$.   \\
\\
One can check by routine calculation this action preserves the coboundaries, and thus descends to an action on $H^{1}(F_{2}; \vv_{u})$.  What is perhaps less obvious is that the action restricted to $F_{2}$ on $H^{1}(F_{2}; \vv_{u})$ is trivial.  This is because in the conjugation relation, if both $g, h \in F_{2}$, then one can use the crossed homomorphism properties to expand $f(g^{-1}hg)$ as 
\begin{align*}
(gf)(h) &= gf(g^{-1}hg) = g\left(f(g^{-1}) + g^{-1}f(hg)\right) = g\left(-g^{-1}f(g) + g^{-1}f(h) + g^{-1}hf(g)\right)\\
&=f(h) + (h-1)f(g)
\end{align*}
In the quotient we see that $gf = f \in H^{1}(F_{2}; \vv_{u})$ for all $g \in F_{2}$.  Thus, the induced action of $\pi_{1}(M)/F_{2} = \langle \overline{x} \rangle$ is the only non-trivial part.  We pick $x \in \pi_{1}(M)$ as a representative of the quotient and investigate its action on $\vv\times \vv$.  \\
\\
Recall the conjugation relations as in Equation \ref{fibpres} which can be shown to obey the equations below.
\begin{equation*}
x^{-1}ax = b^{-1}a^{2} \text{ and } x^{-1}bx = a^{-1}b
\end{equation*}
Expanding these terms via the crossed homomorphism rules gives us that
\begin{equation}\label{inducedaction}
(xf)(a) = (xb^{-1}+xb^{-1}a)f(a) - xb^{-1}f(b) \text{ and }(xf)(b) = -xa^{-1}f(a) + xa^{-1}f(b)
\end{equation}
Thus we have an induced action of $x$ on $\vv^{2}$.  We denote this $x$-module by $\vv_{u}^{2}$.  If we identify the cyclic groups $x$ and $\overline{x}$ via the canonical projection $\pi_{1}(M) \lra \pi_{1}(M)/F_{2}$, then Equation \ref{cochainwedgeV} becomes a short exact sequence of $x$-modules
\begin{equation}\label{sesv2}
\vv_{u} \xrightarrow{\phantom{=}i\phantom{=}} \vv_{u}^{2} \lra H^{1}(F_{2};\vv_{u}) \text{ where } i(v) = \left((1-a)v, (1-b)v \right)
\end{equation}
where the $x$-action on $\vv_{u}$ is the one restricted from $\pi_{1}(M)$ to the cyclic group generated by $x$, the $x$-action on $\vv_{u}^{2}$ is the action calculated in Equation \ref{inducedaction}, and the $x$-action on $H^{1}(F ;\vv_{u})$ is the quotient $x$-module.  We remark that the action of $\overline{x}$ on $H^{1}(F_{2};\vv_{u})$ determines the dimension of the cohomology group $H^{1}(M;\vv_{u})$ due to the isomorphism afforded in Equation \ref{RedSeq}.  Specifically, $\dim H^{1}(M; \vv_{u}) = \dim \ker (1-x):H^{1}(F_{2}; \vv_{u}) \lra H^{1}(F_{2}; \vv_{u})$; the number of distinct fixed directions of the action of $x$ on $H^{1}(F_{2}; \vv_{u})$.  \\
\\
We summarize these results with Lemma \ref{injF2} in the following theorem.
\begin{theorem}\label{thmdimeq}
Let $u \in U$ be a point in hyperbolic Dehn surgery space of the figure-eight knot complement $M$ and let $\vv_{u}$ denote the corresponding $\pi_{1}(M)$-module.  
Let $F$ denote a once-punctured torus fiber.  Then for any $u \in U$, we have that $\dim H^{1}(F; \vv_{u}) = 9$ and $H^{1}(M; \vv_{u})$ is isomorphic to $H^{1}(F_{2}; \vv_{u})^{x}$ where $x$ is the monodromy map of the fiber bundle.  Thus $\dim H^{1}(M; \vv_{u}) = 1$ if and only if there is a unique fixed line of the induced monodromy map of $x$ acting on $H^{1}(F_{2}; \vv_{u})$ and the subset of all such $u \in U$ for which this is true is open.
\end{theorem}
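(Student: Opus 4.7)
The plan is to assemble the pieces already developed in Sections 4.2 and 4.3; the theorem is essentially a compilation of those results plus one semicontinuity argument. First I would compute $\dim H^{1}(F; \vv_{u}) = 9$ by going back to the cochain complex in Equation \ref{cochainwedgeV}. Since $F$ is homotopy equivalent to $S^{1} \vee S^{1}$, the complex collapses to $0 \to \vv_{u} \xrightarrow{i} \vv_{u} \times \vv_{u} \to 0$ with $i(v) = ((1-a)v, (1-b)v)$. For $u \in U$ the restriction of $\Ad\rho_{u}$ to the fiber subgroup $F_{2} = \langle a, b \rangle$ has no nonzero global fixed vector by the cone/Dehn surgery argument cited around Equation \ref{InfResSeq} (i.e.\ $\vv_{u}^{F_{2}} = 0$), and the kernel of $i$ is precisely $\vv_{u}^{F_{2}}$. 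Hence $i$ is injective and $\dim H^{1}(F; \vv_{u}) = 18 - 9 = 9$.

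Second, the isomorphism $H^{1}(M; \vv_{u}) \cong H^{1}(F_{2}; \vv_{u})^{x}$ is already extracted from the Lyndon--Hochschild--Serre inflation--restriction sequence in Equation \ref{InfResSeq}. Using once again that $\vv_{u}^{F_{2}} = 0$, the adjacent terms $H^{1}(\overline{x}; \vv_{u}^{F_{2}})$ and $H^{2}(\overline{x}; \vv_{u}^{F_{2}})$ both vanish, and the sequence collapses to the short exact sequence $0 \to H^{1}(M; \vv_{u}) \to H^{1}(F_{2}; \vv_{u})^{\overline{x}} \to 0$ of Equation \ref{RedSeq}. Identifying $\overline{x}$ with the monodromy $x$ via the canonical projection $\pi_{1}(M) \to \pi_{1}(M)/F_{2}$ as done in Section \ref{oncepuncturedaction} yields the claim.

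For the final equivalence, $H^{1}(F_{2}; \vv_{u})^{x}$ is by definition the fixed subspace of the monodromy action on the nine-dimensional vector space $H^{1}(F_{2}; \vv_{u})$, so $\dim H^{1}(M; \vv_{u}) = 1$ if and only if this fixed subspace is a single line, i.e.\ the kernel of $1 - x$ on $H^{1}(F_{2}; \vv_{u})$ is one-dimensional. The openness statement is exactly Lemma \ref{injF2}: the Half-Lives Half-Dies theorem gives the lower bound $\dim H^{1}(M; \vv_{u}) \geq 1$ for every $u \in U$, while upper semicontinuity of the fixed-point dimension applied to the family of operators $x$ acting on the constant-rank family $H^{1}(F_{2}; \vv_{u})$ guarantees that $\{u : \dim H^{1}(M; \vv_{u}) = 1\}$ is open.

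The only real content, as opposed to bookkeeping, is the vanishing $\vv_{u}^{F_{2}} = 0$ for every $u \in U$, since this is what both makes $i$ injective (giving the dimension nine) and makes the inflation--restriction sequence collapse to the clean isomorphism. This is where the hypothesis that $u$ lies in hyperbolic Dehn surgery space enters essentially, via the Hodgson cone-manifold rigidity invoked in Section 4.2, and it would be the single step that requires care rather than mere assembly.
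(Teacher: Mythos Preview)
Your proposal is correct and takes essentially the same approach as the paper, which presents this theorem as a summary of the preceding results in Sections \ref{oncepuncturedaction} and Lemma \ref{injF2} rather than giving a separate proof. Your write-up is in fact more explicit than the paper's, spelling out that $\ker i = \vv_{u}^{F_{2}}$ and isolating the single nontrivial input $\vv_{u}^{F_{2}} = 0$, which is exactly the right emphasis.
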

\end{subsection}

\begin{subsection}{Cohomology of the Once-Punctured Torus}\label{oncepuncturedtorus}
To ease notation, we will occasionally suppress the coefficient module $\vv_{u}$ when convenient.  Let $F$ denote the once-punctured torus fiber of the bundle $F \lra M \lra S^{1}$.  Let $\del F$ denote the boundary of $F$ generated by the longitude $l = b^{-1}a^{-1}ba$.  Let $U$ denote hyperbolic Dehn filling space of $M$.  \\
\\
With this notation established, consider the inclusion map $\del F \subset F$, and the induced maps on cohomology $\res_{\del F}: H^{*}(F) \lra H^{*}(\del F)$.  We obtain a long exact sequence in twisted cohomology given by
\begin{equation}
0 \lra H^{0}(F;\del F) \lra H^{0}(F) \lra H^{0}(\del F) \lra H^{1}(F; \del F) \lra H^{1}(F) \lra H^{1}(\del F) \lra 0
\end{equation}
The higher order groups vanish due to the fact that $F$ and $\del F$ are homotopic to one-dimensional complexes.  Because $H^{0}(F) = 0$ as $F_{2}$ fixes no non-zero points in $\vv_{u}$, we may reduce the above sequence to
\begin{equation}\label{lesgp}
0 \lra H^{0}(\del F) \lra H^{1}(F; \del F) \lra H^{1}(F) \lra H^{1}(\del F) \lra 0
\end{equation}
As both $F$ and $\del F$ are preserved by the action of conjugation of $x$, Equation \ref{lesgp} is a long exact sequence of $x$-modules.  This section will be dedicated to understanding some properties of the above sequence.  We begin by first proving the following.
\begin{lemma}\label{dualmodules}
For any point $u \in U$, the $x$-modules $H^{0}(\del F; \vv_{u})$ and $H^{1}(\del F; \vv_{u})$ are isomorphic.  Moreover, $\dim H^{0}(\del F; \vv_{u})^{x} = \dim H^{1}(\del F; \vv_{u})^{x} = 1$.  
\end{lemma}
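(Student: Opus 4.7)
The plan is to identify both cohomology groups with explicit $x$-submodules or $x$-quotients of $\vv_{u}$, then use the $\pi_{1}(M)$-invariant Killing form together with Poincar\'{e} duality on $\del F = S^{1}$ to realize the isomorphism, and finally read off the dimension of the fixed subspace from the simultaneous eigen-decomposition of $\Ad \rho_{u}(x)$ and $\Ad \rho_{u}(l)$ developed in the preceding subsection.

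First I would observe that since $\pi_{1}(\del F) = \langle l \rangle$ is cyclic and $x$ commutes with $l$ inside $\pi_{1}(\del M)$, Lemma \ref{circcohom} gives canonical identifications $H^{0}(\del F; \vv_{u}) \simeq \vv_{u}^{l}$ and $H^{1}(\del F; \vv_{u}) \simeq \vv_{u}/(1-l)\vv_{u}$ as $x$-modules, the $x$-action on both sides being restricted or descended from $\Ad \rho_{u}(x)$ acting on $\vv_{u}$.  This is well-defined because $x$ and $l$ commute, so $\Ad \rho_{u}(x)$ preserves both $\vv_{u}^{l}$ and $(1-l)\vv_{u}$.

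Next, the Killing form $B$ on $\vv_{u}$ is $\pi_{1}(M)$-invariant, and a short computation using $B(v, lw) = B(l^{-1}v, w)$ shows that $\vv_{u}^{l} \perp (1-l)\vv_{u}$ with respect to $B$: for $v \in \vv_{u}^{l}$ we have $B(v,(1-l)w) = B(v,w) - B(l^{-1}v,w) = 0$.  Hence $B$ descends to a non-degenerate, $x$-equivariant pairing
\begin{equation*}
\vv_{u}^{l} \otimes \vv_{u}/(1-l)\vv_{u} \longrightarrow \R,
\end{equation*}
which is precisely the cup-product pairing of Section \ref{cohompairings} applied to the closed oriented $1$-manifold $\del F$.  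This identifies $H^{0}(\del F; \vv_{u})$ and $H^{1}(\del F; \vv_{u})$ as dual $x$-modules.  To upgrade duality to isomorphism, one would argue that both $x$-modules are self-dual: the spectrum of $\Ad \rho_{u}(x)$ on $\vv_{u}$ is closed under $\lambda \mapsto \lambda^{-1}$ by Equation \ref{eigofadx}, and $\rho_{u}(x)$ and $\rho_{u}(l)$ share a common axis in $\HH^{3}$, so they are simultaneously brought into the block form of Equation \ref{xactioneig}; the Jordan types of $x$ on $\vv_{u}^{l}$ and on $\vv_{u}/(1-l)\vv_{u}$ then coincide, giving the desired $x$-module isomorphism.

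For the dimension of the fixed subspace I would identify $H^{0}(\del F; \vv_{u})^{x} = \vv_{u}^{\langle l, x \rangle} = \vv_{u}^{\pi_{1}(\del M)}$.  In the simultaneous eigenbasis of Equation \ref{xactioneig}, both $\Ad \rho_{u}(x)$ and $\Ad \rho_{u}(l)$ are block-diagonal, and the only line on which they both act with eigenvalue $1$ is spanned by the fourth basis vector, giving dimension one in the loxodromic case; at the parabolic point $u=0$ the Jordan form in Equation \ref{PX} together with its analog for $l$ again cuts out a single common line.  The equality $\dim H^{1}(\del F; \vv_{u})^{x} = 1$ then follows either from the $x$-module isomorphism or directly from the identity $\dim W^{x} = \dim W_{x}$ applied to the dual pairing.

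The main obstacle is step three: promoting the natural duality to an actual isomorphism of $x$-modules when $\rho_{u}(l)$ fails to act semisimply on $\vv_{u}$, as at the parabolic point $u=0$.  In that case the natural map $\vv_{u}^{l} \hookrightarrow \vv_{u} \twoheadrightarrow \vv_{u}/(1-l)\vv_{u}$ has nontrivial kernel, so one cannot use the inclusion-projection construction directly and must instead compare Jordan types for the $x$-action, either by exploiting the common $\HH^{3}$-axis of $\rho_{u}(x)$ and $\rho_{u}(l)$ or by a direct calculation using the explicit matrices of Section \ref{kleinmodelrep}.
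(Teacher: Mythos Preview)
Your route to the $x$-module isomorphism $H^{0}(\del F; \vv_{u}) \simeq H^{1}(\del F; \vv_{u})$ is valid but more circuitous than the paper's. For $u \neq 0$ the paper simply observes that $\Ad\rho_{u}(l)$ is semisimple on $\vv_{u}$ (being conjugate to one of the block forms in Lemma \ref{circdim}), so $\vv_{u} = \ker(1-l) \oplus \im(1-l)$ holds as an $x$-module decomposition and the coinvariants are literally identified with the invariants. Your Killing-form pairing establishes duality, but you then fall back on the same simultaneous block form to compare Jordan types --- at which point the duality step is redundant, since semisimplicity of $l$ already gives the splitting directly. At $u = 0$ both you and the paper defer to an explicit computation; the paper finds that $x$ acts on each of $H^{0}(l;\vv_{0})$ and $H^{1}(l;\vv_{0})$ as a single size-$3$ unipotent block. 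One remark your Killing-form approach \emph{could} buy: since $\Ad\rho_{0}(x)$ is unipotent on all of $\vv_{0}$, its restriction to $\vv_{0}^{l}$ is unipotent, and a unipotent operator always has the same Jordan partition as its inverse-transpose --- so duality alone forces the isomorphism at $u=0$ without computing the explicit matrices in Equation \ref{parabolicxinducedactions}.

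There is, however, a genuine gap in your argument that $\dim \vv_{u}^{\langle x,l\rangle} = 1$. Your claim that ``the only line on which they both act with eigenvalue $1$ is spanned by the fourth basis vector'' is immediate only when \emph{both} $\rho_{u}(x)$ and $\rho_{u}(l)$ have nonzero translation length and rotation angle not in $\pi\Z$. Lemma \ref{circdim} shows that along real curves in $U$ one of them can have a $3$-dimensional fixed space: for instance if $\rho_{u}(l)$ is a pure rotation (Equation \ref{xhyplengthzero}) then $\vv_{u}^{l} = \mathrm{span}(w_{1},w_{4},w_{7})$. You must then rule out that $\rho_{u}(x)$ suffers the \emph{same} degeneracy, for otherwise $\vv_{u}^{x} \cap \vv_{u}^{l}$ would itself be $3$-dimensional. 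The paper supplies exactly this missing step: if both $x$ and $l$ acted as pure rotations about their common axis, the peripheral $\Z^{2}$ would preserve a geodesic in $\HH^{3}$ and act on it trivially, which is incompatible with it arising as the cusp group of a hyperbolic structure in Dehn-filling space. Without some geometric input of this kind, your case analysis is incomplete on a positive-codimension but nonempty locus of $U$.
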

\begin{proof}
The statement can be reduced to analyzing the $x$-action on $H^{*}(\del F; \vv_{u}) = H^{*}(l; \vv_{u})$.  For $u \neq 0$, up to conjugation, the $l$-action on $\vv_{u}$ is of the form of either Equation \ref{xhyplengthzero}, Equation \ref{xrotpi}, or Equation \ref{xelliptic}.  In each case we have the decomposition
\begin{equation}\label{vectordecomp}
\vv_{u} = \ker (1-\Ad \rho_{u}(l)) \oplus \im (1-\Ad \rho_{u}(l))
\end{equation}
Equation \ref{vectordecomp} is an equality of $x$-modules, as $x$ commutes with $l$.  By Lemma \ref{circcohom}, $H^{0}(l; \vv_{u}) = \vv_{u}^{l}$ and $H^{1}(l; \vv_{u}) = \left(\vv_{u}\right)_{l}$, where $\left(\vv_{u}\right)_{l}$ denotes the co-invariants of $\Ad \rho_{u}(l)$ acting on $\vv$.  \\
\\
We may use the decomposition in Equation \ref{vectordecomp} to see that $ \left(\vv_{u}\right)_{l}$ is isomorphic to $\ker (1-\Ad \rho_{u}(l)) = \vv_{u}^{l}$ as $x$-modules.  This shows that for non-zero $u \in U$, we have as $x$-modules that $H^{0}(\del F;\vv_{u})$ is isomorphic to $H^{1}(\del F; \vv_{u})$.    \\
\\
In the case where $u = 0$, we lose the equality of Equation \ref{vectordecomp} as the kernel and image intersect non-trivially.  We may explicitly write out the $x$-action on $H^{0}(l;\vv_{0})$ and $H^{1}(l;\vv_{0})$ and compare the two.  The Jordan form of $\Ad \rho_{0}(l)$ acting on $\vv$ is given by
\begin{equation*}\label{parabolicl}
\Ad \rho_{0}(l) = \left(
\begin{array}{ccccccccc}
 1 & 0 & 0 & 0 & 0 & 0 & 0 & 0 & 0 \\
 0 & 1 & 1 & 0 & 0 & 0 & 0 & 0 & 0 \\
 0 & 0 & 1 & 1 & 0 & 0 & 0 & 0 & 0 \\
 0 & 0 & 0 & 1 & 0 & 0 & 0 & 0 & 0 \\
 0 & 0 & 0 & 0 & 1 & 1 & 0 & 0 & 0 \\
 0 & 0 & 0 & 0 & 0 & 1 & 1 & 0 & 0 \\
 0 & 0 & 0 & 0 & 0 & 0 & 1 & 1 & 0 \\
 0 & 0 & 0 & 0 & 0 & 0 & 0 & 1 & 1 \\
 0 & 0 & 0 & 0 & 0 & 0 & 0 & 0 & 1 \\
\end{array}
\right)
\end{equation*}
which has three Jordan-blocks of size $1,3$, and $5$ all with eigenvalues $1$; compare to Equation \ref{PX}.  It is clear that both $H^{0}(l;\vv_{0})$ and $H^{1}(l;\vv_{0})$ are 3-dimensional.  We may identify $H^{0}(l; \vv)$ with the space spanned by $e_{1},e_{2},e_{5}$, whereas we may identify $H^{1}(l;\vv_{0})$ with the images of $e_{1}, e_{4}, e_{9}$ under the projection $\R^{9} \lra \R^{9}/\im (1-\Ad \rho_{0}(l))$.  \\
\\
As $x$ preserves both $H^{0}(l;\vv_{0})$ and $H^{1}(l;\vv_{0})$, we may use the bases in the paragraph described above to yield the $x$-action on $H^{0}(l;\vv_{0})$ and $H^{1}(l;\vv_{0})$ as
\begin{equation}\label{parabolicxinducedactions}
\rho_{0}(x)|_{H^{0}(l;\vv_{0})} = \left(
\begin{array}{ccc}
 1 & 0 & 0 \\
 \frac{4}{3} & 1 & 0 \\
 -\frac{1}{108} & -\frac{1}{72} & 1 \\
\end{array}
\right) \text{ and } 
\rho_{0}(x)|_{H^{1}(l;\vv_{0})} = \left(
\begin{array}{ccc}
 1 & \frac{1}{6} & -\frac{1}{6} \\
 0 & 1 & -2 \\
 0 & 0 & 1 \\
\end{array}
\right)
\end{equation}
A straight forward calculation shows that both matrices in Equation \ref{parabolicxinducedactions} have the same Jordan canonical form, namely 1 block of size 3, and thus, as $x$-modules $H^{0}(l;\vv_{0})$ and $H^{1}(l;\vv_{0})$ are isomorophic.  This proves that for $u \in U$, we have $H^{0}(\del F; \vv_{u})$ and $H^{1}(\del F; \vv_{u})$ are isomorphic as $x$-modules.\\
\\
We now need only to calculate the dimension of $H^{0}(\del F; \vv_{u})^{x}$ for all $u \in U$.  We remark the remainder of the proof provided here is a very minor deviation from \cite[Lemma 5.1]{Heusener2011Infinitesimal} and provided largely in an effort for self-containment.  Observe that $H^{0}(\del F; \vv_{u})^{x}$ is simply the subspace of $\vv$ that is fixed by both $\Ad \rho_{u}(x)$ and $\Ad \rho_{u}(l)$, thus it suffices to calculate the dimension of the fixed points of the boundary torus $\pi_{1}(T^{2}) < \pi_{1}(M)$.  We proceed by cases.  For $u = 0$, we can see clearly from Equation \ref{parabolicxinducedactions} that for the action of $x$ on $H^{0}(l; \vv_{0})$, there is a single fixed point, so $\dim H^{*}(\del F; \vv_{0})^{x} = 1$.  \\
\\
For $u \neq 0$, we have that $\rho_{u}(l)$ is loxodromic and of the form of Equation \ref{xactioneig}.  If $u$ has non-integer multiple of $\pi$-rotational part and non-zero translational part, then up to conjugation $\Ad \rho_{u}(l)$ and $\Ad \rho_{u}(x)$ are of the form in Equation \ref{psorep}, which has a one-dimensional infinitesimal centralizer given by
\begin{equation*}
\left(
\begin{array}{cccc}
 1 & 0 & 0 & 0 \\
 0 & 1 & 0 & 0 \\
 0 & 0 & -1 & 0 \\
 0 & 0 & 0 & -1 \\
\end{array}
\right)
\end{equation*}
If however say for example $\rho_{u}(l)$ has trivial translation, then $\Ad \rho_{u}(l)$ has the form of Equation \ref{xhyplengthzero}.  This means representation of $\rho_{u}(x)$ cannot also be of the same form as in Equation \ref{xhyplengthzero}, as this would imply that in $H^{3}$, the subgroup generated by the boundary torus of $x$ and $l$ preserves a common geodesic upon which both act by rotation which would contradict the fact that $x$ and $l$ generated the fundamental group of a torus.  Thus, $\rho_{u}(x)$ must have non-trivial translational part.  Consequently $\Ad \rho_{u}(x)$ must be of the form Equation \ref{xrotpi}.  Inspection of Equations \ref{xhyplengthzero} and \ref{xrotpi} yield a single fixed vector.  An analogous arguments holds if $\rho_{u}(l)$ has trivial rotational part.  Thus for $u \in U$, $\dim H^{0}(\del F; \vv) = \dim H^{1}(\del F; \vv) = 1$
\end{proof}
With Lemma \ref{dualmodules} established, we may state the following theorem. 
\begin{theorem}\label{Fcohom}
For each $u \in U$, let $I_{u}$ denote the image of the map between $x$-modules $H^{1}(F,\del F; \vv_{u}) \lra H^{1}(F; \vv_{u})$ as in the long exact sequence from Equation \ref{lesgp}.  Then 
\begin{equation}\label{flses}
I_{u} \lra H^{1}(F; \vv_{u}) \lra H^{1}(\del F; \vv_{u})
\end{equation}
is a short exact sequence of $x$-modules.  Moreover, for a sufficiently small neighborhood $V \subset U$ about $0$, we have that $H^{1}(F; \vv_{v})^{x}$ is isomorphic to $H^{1}(l; \vv_{v})^{x}$.  In particular, for $v \in V$, we have that $\dim H^{1}(F;\vv_{v})^{x} = 1$ for all $v \in V$.  
\end{theorem}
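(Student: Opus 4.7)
Short-exactness of the sequence $I_u \to H^1(F;\vv_u) \to H^1(\del F;\vv_u)$ is immediate from Equation \ref{lesgp}: by construction $I_u$ injects into $H^1(F;\vv_u)$, the kernel of the restriction map equals $I_u$ by exactness at the middle term, and the restriction is surjective because Equation \ref{lesgp} terminates in zero. Since conjugation by $x$ preserves both $F$ and $\del F$ as subgroups of $\pi_1(M)$, every map in Equation \ref{lesgp} is $x$-equivariant, so this is a short exact sequence of $x$-modules.

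Applying the left-exact $x$-invariants functor yields
\[
0 \to I_u^x \to H^1(F;\vv_u)^x \to H^1(\del F;\vv_u)^x.
\]
By Lemma \ref{dualmodules} the target is one-dimensional, and by the inflation-restriction isomorphism Equation \ref{RedSeq} combined with the Half-Lives Half-Dies bound cited in Lemma \ref{injF2}, the middle term has dimension at least one. Hence it suffices to show $I_u^x = 0$ for $u$ in a neighborhood of $0$: an injection from a space of dimension $\geq 1$ into a one-dimensional space is automatically an isomorphism onto the target.

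The crucial step---and the main obstacle---is a direct verification at $u = 0$ that $I_0^x = 0$. An Euler-characteristic calculation gives $\dim H^1(F,\del F;\vv_0) = 9$ (using that $H^0$ and $H^2$ of $(F,\del F)$ vanish because $\vv_0$ has no $F_2$-invariants, and, via self-duality under the Killing form, no co-invariants either), while $\dim H^0(\del F;\vv_0) = 3$, so Equation \ref{lesgp} forces $\dim I_0 = 6$. At the complete structure, $\rho_0(x)$ is parabolic with adjoint action of the Jordan form in Equation \ref{PX}; using the bar-resolution model from Section \ref{subseclocalcohom} I would compute explicit cocycle representatives spanning $I_0 \subset H^1(F;\vv_0)$ and verify with Mathematica that $\id - \Ad\rho_0(x)$ has trivial kernel on this six-dimensional subspace. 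The non-diagonalizability of the parabolic action is what makes this verification delicate: one must work modulo coboundaries carefully so that fixed-point counts are taken in the correct quotient.

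Once $I_0^x = 0$ is established, extend to a neighborhood of $0$ by two continuity arguments. First, $\dim H^1(F;\vv_u)^x$ is upper semi-continuous in $u$, being the dimension of the kernel of $\id - x$ acting on the continuously varying nine-dimensional family $H^1(F;\vv_u)$; combined with the Half-Lives Half-Dies lower bound, this forces $\dim H^1(F;\vv_u)^x = 1$ throughout an open $V \ni 0$. Second, the restriction $H^1(F;\vv_u)^x \to H^1(\del F;\vv_u)^x$ varies continuously in $u$ and is an isomorphism at $u = 0$, hence remains nonzero---and therefore an isomorphism between one-dimensional spaces---on a neighborhood of $0$. This identifies $H^1(F;\vv_v)^x \cong H^1(l;\vv_v)^x$ for all $v \in V$, completing the proof.
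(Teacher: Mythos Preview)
Your overall architecture matches the paper's: reduce to showing $I_0^x = 0$ at the parabolic point, then propagate by semi-continuity. The difference lies entirely in how you propose to verify $I_0^x = 0$.

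You plan to produce explicit cocycle representatives spanning the six-dimensional $I_0 \subset H^1(F;\vv_0)$ and then check by machine that $1 - x$ is injective there. This would work, but identifying $I_0$ explicitly (i.e.\ characterizing which classes in $H^1(F;\vv_0)$ die in $H^1(\del F;\vv_0)$) is exactly the delicate step you flag, and you do not carry it out. The paper sidesteps this entirely with a characteristic-polynomial trick: since $I_0 \hookrightarrow H^1(F;\vv_0) \twoheadrightarrow H^1(\del F;\vv_0)$ is short exact as $x$-modules, the characteristic polynomial of $x$ on $H^1(F;\vv_0)$ factors as the product of those on $I_0$ and on $H^1(\del F;\vv_0)$. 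Lemma~\ref{dualmodules} already gives the latter as $(t-1)^3$. So one only needs the characteristic polynomial of $x$ on the full nine-dimensional $H^1(F;\vv_0)$, which by Poincar\'e duality equals that on $H^1(F,\del F;\vv_0)$; a direct Mathematica computation gives $-(t-1)^3(t^6 - 15t^5 + 27t^4 - 42t^3 + 27t^2 - 15t + 1)$, and since the sextic does not vanish at $t=1$, the factor $I_0$ has no eigenvalue $1$, hence $I_0^x = 0$.

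The advantage of the paper's route is that it never needs to touch $I_0$ as a subspace: the computation is on the ambient nine-dimensional module, which has a clean description via the bar resolution (Section~\ref{oncepuncturedaction}). Your route would require either computing the boundary map $H^0(\del F) \to H^1(F,\del F)$ explicitly or the restriction $H^1(F) \to H^1(\del F)$ at the cocycle level, which is more bookkeeping. Both are computer-assisted, but the characteristic-polynomial argument is the cleaner packaging.
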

\begin{proof}
For parts of this proof, we suppress the coefficient module $\vv_{u}$ unless we wish to draw particular attention to a specific representation.  From the exact sequence of $x$-modules in Equation \ref{lesgp}, we obtain the short exact sequence of $x$-modules in Equation \ref{flses}.  We may thus consider the associated sequence of group cohomology of the cyclic group $\langle x \rangle$.  This yields
\begin{equation}\label{exseqcirclecohom}
I_{u}^{x} \lra H^{1}(F)^{x} \lra H^{1}(\del F)^{x} \lra (I_{u})_{x} \lra H^{1}(F)_{x} \lra H^{1}(\del F)_{x}
\end{equation}
To prove that for a sufficiently small neighborhood $V$ about 0 that $H^{1}(F; \vv_{v})^{x}$ is one-dimensional, we need only to consider the case at the parabolic point $v = 0$ by Theorem \ref{thmdimeq}. \\
\\
Consider Equation \ref{exseqcirclecohom} with $v = 0$.  If we manage to show that $I_{0}^{x} = 0$, then $H^{1}(F; \vv_{0})^{x}$ will be isomorphic to $H^{1}(\del F; \vv_{0})^{x}$, the latter of which we know is one-dimensional by Lemma \ref{dualmodules}.  By Equation \ref{flses}, we have that $H^{1}(F; \vv_{0})$ is an extension of $H^{1}(\del F; \vv_{0})$ by $I_{0}$.  Thus the characteristic polynomial of $x$ acting on $H^{1}(F; \vv_{0})$ is the product of the characteristic polynomials of $x$ acting on $H^{1}(\del F; \vv_{0})$ and $I_{0}$.  By the analysis of Lemma \ref{dualmodules}, specifically Equation \ref{parabolicxinducedactions}, If we manage to show that the characteristic polynomial of $H^{1}(F;\vv_{0})$ factors as $(1-t)^{3}q(t)$ with $q(1) \neq 0$, then we are done.  \\
\\
To calculate the characteristic polynomial of the $x$-module $H^{1}(F;\vv_{0})$, we instead calculate the characteristic polynomial of the $x$-module $H^{1}(F;\del F; \vv_{0})$ which are equal by Poincar\'{e} Duality.  In \texttt{PRFE\textunderscore Mathematica.nb} we see that the characteristic polynomial of the latter is given by
\begin{equation}\label{xh1fpoly}
-(t-1)^3 \left(t^6-15 t^5+27 t^4-42 t^3+27 t^2-15 t+1\right)
\end{equation}
In the notebook, this characteristic polynomial is labeled \texttt{charxPH1FDF}.  The term $q(t)$ in Equation \ref{xh1fpoly} evaluates to a non-zero quantity when $t = 1$, and thus $I_{0}$ admits no additional fixed points of $x$, hence $I_{0}^{x} = 0$ as claimed.  
\end{proof}
As a corollary of this, we end up with a rigidity result regarding the map $H^{1}(F) \lra H^{1}(\del F)$.  Compare this result to that of Ballas's in Section \ref{ssballas}.  
\begin{corollary}\label{deforml}
Let $u \in V$ correspond to a representation sufficiently close to $0 \in V$ for which $H^{1}(F; \vv_{u})^{x}$ is one-dimensional as in Theorem \ref{Fcohom}.  Then there exists a sufficiently small neighborhood $W \subset V$ about $0$ for which the map $H^{1}(F; \vv_{u}) \lra H^{1}(\del F; \vv_{u})$ is non-zero for all $u \in W$.  In addition $I_{u} = 0$ for all $u \in W$.  
\end{corollary}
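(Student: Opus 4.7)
The plan is to show both conclusions follow from the single non-vanishing of the restriction map on $x$-invariants
\[
r_u \de H^1(F; \vv_u)^x \lra H^1(\del F; \vv_u)^x,
\]
which I then propagate from $u = 0$ to a neighborhood by continuity of the rationally parametrized cocycle data. Applying $x$-cohomology to the short exact sequence of Equation \ref{flses} yields
\[
0 \lra I_u^x \lra H^1(F; \vv_u)^x \xrightarrow{r_u} H^1(\del F; \vv_u)^x \lra (I_u)_x \lra \cdots.
\]
For $u \in V$, both middle spaces are one-dimensional by Theorem \ref{Fcohom} and Lemma \ref{dualmodules}, so $r_u$ is either zero (in which case $\dim I_u^x = 1$ and the composite $H^1(M; \vv_u) \cong H^1(F;\vv_u)^x \to H^1(\del F;\vv_u)$---the infinitesimal restriction from $M$ to the longitude---vanishes) or an isomorphism (in which case $I_u^x = 0$ and the restriction is non-zero). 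Both assertions of the corollary therefore reduce to the claim that $r_u \neq 0$ for $u$ in a neighborhood of $0$.

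At $u = 0$, the characteristic polynomial computation in the proof of Theorem \ref{Fcohom} already shows $I_0^x = 0$: the factor $q(t)$ of the characteristic polynomial of $x$ on $H^1(F; \vv_0)$ corresponds to $I_0$ and satisfies $q(1) \neq 0$. Hence $r_0$ is an isomorphism. To extend this to a neighborhood, I would invoke the rational dependence of $\rho_u$ on the shape parameters $z_1, z_2$ established in Section \ref{secPSOREP}: the twisted cochain complexes computing $H^1(F; \vv_u)$ and $H^1(\del F; \vv_u)$, as well as the action of $x$ on them, depend rationally on $u$. I would then pick a continuously varying cocycle $c_u \de F_2 \lra \vv$ whose class generates the one-dimensional line $H^1(F; \vv_u)^x$ for $u \in V$---for instance, by solving the linear system cutting out the $x$-invariant cohomology via Cramer's rule. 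Its restriction $c_u(l) \in \vv$ to the longitude varies continuously, and paired against a continuously varying linear functional on $\vv$ detecting the fixed direction of $x$ on $H^1(\del F; \vv_u)$, produces a continuous scalar $\phi(u)$ that vanishes precisely when $r_u = 0$. Since $\phi(0) \neq 0$, there is an open neighborhood $W \subset V$ of $0$ on which $\phi \neq 0$, yielding both conclusions.

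The hard part will be establishing this continuity rigorously at $u = 0$, since $\dim H^1(\del F; \vv_u)$ jumps from $1$ (for generic $u \neq 0$ near $0$) to $3$ at $u = 0$, so the full target of the restriction map changes dimension and the cohomology groups do not form a locally trivial family of vector spaces. Restricting attention to the $x$-invariant subspace $H^1(\del F; \vv_u)^x$, which has constant dimension $1$ throughout $V$ by Lemma \ref{dualmodules}, bypasses this jump; the dual functional can be constructed explicitly from the Jordan normal forms of $\Ad \rho_u(l)$ displayed in Equations \ref{xhyplengthzero}, \ref{xrotpi}, and \ref{PX}, furnishing a continuous family of functionals whose pairing with $c_u(l)$ produces the desired $\phi(u)$.
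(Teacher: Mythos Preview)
Your reduction is correct and matches the paper's: both arguments establish the corollary by showing the single map $r_u \de H^1(F;\vv_u)^x \to H^1(\del F;\vv_u)^x$ between one-dimensional spaces is non-zero at $u=0$ and then propagate by continuity. The genuine difference is in how $r_0 \neq 0$ is verified. You observe, quite economically, that this is already contained in the proof of Theorem~\ref{Fcohom}: the characteristic polynomial factorization there gave $I_0^x = 0$, so exactness of Equation~\ref{exseqcirclecohom} forces $r_0$ to be injective, hence an isomorphism. The paper instead re-proves this by writing down an explicit cocycle $f$ on $\pi_1(M)$ (Equation~\ref{Pcohom}), computing $f(a), f(b), f(l)$, and checking directly that $f(l) \notin \im(1-\Ad\rho_0(l))$.

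Your route is shorter and logically tighter, since it avoids duplicating work already done in Theorem~\ref{Fcohom}. The paper's explicit computation buys something extra, however: the concrete cocycle with $f(x)=0$ exhibits that $\res_x \de H^1(M;\vv_0) \to H^1(x;\vv_0)$ is the zero map at the ideal point (a fact the paper remarks on immediately after the proof), and the explicit value of $f(l)$ is the kind of data that feeds into the later slope computations of Section~\ref{secsymset}. Your argument does not produce this byproduct.

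On the continuity step, the paper simply says ``by semi-continuity'' after noting both source and target of $r_u$ have constant dimension one on $V$. Your more careful discussion --- flagging the dimension jump in $H^1(\del F;\vv_u)$ and explaining that passing to $x$-invariants restores constant rank --- is a genuine improvement in rigor over the paper's one-word justification, and your proposed construction of a continuous detecting functional via the Jordan data is a reasonable way to make it precise.
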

\begin{proof}
Picking a $u \in V$ guarantees that $H^{1}(F; \vv_{u})^{x}$ is locally one-dimensional.  Lemma \ref{dualmodules} guarantees the same of $H^{1}(\del F; \vv_{u})^{x}$.  Thus the map $H^{1}(F; \vv_{u})^{x} \lra H^{1}(\del F; \vv_{u})^{x}$ in Equation \ref{exseqcirclecohom} is to and from one-dimensional spaces.  If we prove that the map $H^{1}(F; \vv_{0})^{x} \lra H^{1}(\del F; \vv_{0})^{x}$ is an isomorphism, then the same will be true for the maps $H^{1}(F; \vv_{u})^{x} \lra H^{1}(\del F; \vv_{u})^{x}$ for each $u$ within a sufficiently small neighborhood $W$ containing $0$ by semi-continuity. \\
\\
Let us simply find an element $f \in H^{1}(M; \vv_{u})$ with non-zero image.  If we define the crossed homomorphism $f: \pi_{1}(M) \lra \vv$ via
\begin{equation}\label{Pcohom}
f(x) = 0 \text{ and } f(y) = \left(
\begin{array}{cccc}
 14 & -31 \sqrt{3} & -57 & 81 \\
 -31 \sqrt{3} & 144 & 91 \sqrt{3} & -115 \sqrt{3} \\
 -57 & 91 \sqrt{3} & 181 & -260 \\
 -81 & 115 \sqrt{3} & 260 & -339 \\
\end{array}
\right)
\end{equation}
we can readily verify that $f$ is not a coboundary as $(f(x),f(y))$ lies outside the span of $(1-\Ad \rho_{0} (x),1-\Ad \rho_{0} (y))$ on $\vv_{0}\times \vv_{0}$.  In fact, if we calculate $f$ restricted to $\pi_{1}(F)$, we see that
\begin{equation}\label{Pcohomresab}
f(a) = \left(
\begin{array}{cccc}
 40 & 20 \sqrt{3} & -\frac{297}{2} & \frac{307}{2} \\
 20 \sqrt{3} & 0 & -51 \sqrt{3} & 63 \sqrt{3} \\
 -\frac{297}{2} & -51 \sqrt{3} & 638 & -670 \\
 -\frac{307}{2} & -63 \sqrt{3} & 670 & -678 \\
\end{array}
\right) \text{ and } f(b) = \left(
\begin{array}{cccc}
 -88 & -24 \sqrt{3} & \frac{285}{4} & -\frac{475}{4} \\
 -24 \sqrt{3} & 0 & \frac{35 \sqrt{3}}{4} & -\frac{21 \sqrt{3}}{4} \\
 \frac{285}{4} & \frac{35 \sqrt{3}}{4} & -\frac{137}{4} & \frac{231}{4} \\
 \frac{475}{4} & \frac{21 \sqrt{3}}{4} & -\frac{231}{4} & \frac{489}{4} \\
\end{array}
\right)
\end{equation}
One may readily verify that $f$ is invariant under the induced action of $x$, thus descends to a fixed point in $H^{1}(F; \vv_{0})$ under $x$.  A straight forward calculation using the properties of crossed homomorphisms yields that
\begin{equation}\label{Ponl}
f(l) = f(b^{-1}a^{-1}ba) = \left(
\begin{array}{cccc}
 -40 & 0 & 0 & 0 \\
 0 & 120 & -80 \sqrt{3} & -80 \sqrt{3} \\
 0 & -80 \sqrt{3} & 151 & 191 \\
 0 & 80 \sqrt{3} & -191 & -231 \\
\end{array}
\right)
\end{equation}
which lies outside the image of $1-\Ad \rho_{0}(l)$, and thus the map $H^{1}(F; \vv_{0})^{x} \lra H^{1}(\del F; \vv_{0})^{x}$ is an isomorphism.  These calculations are carried out in detail in \texttt{PRFE\textunderscore Mathematica.nb}.
\end{proof}
We remark that in the proof of Corollary \ref{deforml} we also showed the map $\res_{x}: H^{1}(M; \vv_{0}) \lra H^{1}(x; \vv_{0})$ is trivial.  One can interpret this as the statement that it is possible, at the ideal point, to deform the hyperbolic structure of $M$ while keeping the projective structure on the circle generated by $x$ fixed.  However, Corollary \ref{deforml} says that non-trivial deformations of the projective structure on $M$ lead to non-trivial deformations of $\del F$.  This is an enhancement of the notion of projective rigidity relative to the boundary, which states that non-trivial deformations of $M$ lead to non-trivial deformations of $\del M$, as $\del F \subset \del M$.  
\end{subsection}

\begin{subsection}{Slope of a Representation}
Let $M$ be the figure-eight knot complement and $x$ and $l$ be a meridian and longitude of $\pi_{1}(M)$.  Let $u \in U$ be a point in hyperbolic Dehn filling space, and $\vv_{u}$ the associated $\pi_{1}(M)$-module to $\Ad \rho_{u}$.  Let us assume further that $\dim H^{1}(M; \vv_{u}) = 1$.  Then by Theorem \ref{thmdimeq}, there's an open subset $V$ about $u$ for which $\dim H^{1}(M; \vv_{v}) = 1$ for all $v \in V$.  Let us assume further that at this $u$, we have both restriction maps $\res_{x}\de H^{1}(M; \vv_{u}) \lra H^{1}(x; \vv_{u})$ and $\res_{l}: H^{1}(M; \vv_{u}) \lra H^{1}(l; \vv_{u})$ are isomorphisms.  We may shrink $V$ about $u$ to combine these conditions so that $\res_{x}\circ \res_{l}^{-1}\de H^{1}(l; \vv_{v}) \lra H^{1}(x; \vv_{v})$ is an isomorphism and $\dim H^{1}(M; \vv_{v}) = 1$ for all $v \in V$.  By Theorem \ref{thmdimeq} and Lemma \ref{circdim}, the set of all such $u$ for which these conditions hold is an open subset of hyperbolic Dehn filling space.\\
\\
Let $u \in U$ be such a point in hyperbolic Dehn filling space.  By Equation \ref{vectordecomp}, $\vv_{u} = \ker(1-\Ad \rho_{u}(x))\oplus \im(1-\Ad \rho_{u}(x))$, so we may naturally identify $H^{1}(x; \vv_{u})$ with $H^{0}(x; \vv_{u}) = \vv_{u}^{x}$.  A similar statement may be said about identifying $H^{1}(l; \vv_{u})$ and $H^{0}(l; \vv_{u}) = \vv_{u}^{l}$.  By Lemma \ref{dualmodules}, we have that $\vv_{u}^{x} =  \vv_{u}^{l}$ and thus by Poincar\'{e} Duality, $\dim H^{1}(T^{2}; \vv_{u}) = 2$.  Combining the restriction isomorphisms $\res_{x}$ and $\res_{l}$, we obtain an embedding of $H^{1}(T^{2}; \vv_{u})$ into $\vv\oplus \vv$ as a two-dimensional subspace.  
\begin{equation*}\label{torusiso}
H^{1}(T^{2}; \vv_{u}) \lra \vv_{u}^{x} \oplus \vv_{u}^{l} \lra \vv \oplus \vv
\end{equation*}
Thus, for all points of hyperbolic Dehn filling space sufficiently close to $u$ inside of $V$, we may consider the composition
\begin{equation}\label{composition}
H^{1}(M; \vv_{v}) \lra H^{1}(T^{2}; \vv_{v}) \lra \vv \oplus \vv
\end{equation}
which is an isomorphism onto a one-dimensional subspace of $\vv_{v}^{x} \oplus \vv_{v}^{l}$.  By these identifications we may speak unambiguously about $\res_{T^{2}}$ taking values in $\vv_{v}^{x} \oplus \vv_{v}^{l}$.\\
\\
Thus, we may pick a smoothly varying non-zero vector for the one-dimensional space $\vv_{u}^{x} = \vv_{u}^{l}$, call it $a_{u} \in \vv$, and a smoothly varying non-zero vector for the one-dimensional space $H^{1}(M;\vv_{u})$, call it $z_{u}$.  In \texttt{PRFE\textunderscore Mathematica.nb}, we explicitly calculate $a_{u}$, it is labeled as the variable \texttt{au}.  Relative to the basis $\left\{(a_{u},0),(0,a_{u}) \right\} \subset \vv_{u}^{x} \oplus \vv_{u}^{l}$, we know by the above remarks that the image of $\res_{T^{2}}(z_{u})$ is equal to $(c_{1u}a_{u} ,c_{2u}a_{u}) \in \vv_{u}^{x}\oplus \vv_{u}^{l}$ for smoothly varying non-zero $c_{1u}, c_{2u} \in \R$.  Neither $c_{1u}$ nor $c_{2u}$ can be zero as both restrictions of $\pi_{1}(M)$ to the circles $x$ and $l$ are isomorphisms on the level of first cohomology.  With this established, we provide a definition.
\begin{definition}\label{defslope}
Let $u \in U$ be a point in hyperbolic Dehn filling space and $\vv_{u}$ be the corresponding $\pi_{1}(M)$-module.  Assume further that at this point $u$, both $\res_{x}\de H^{1}(M; \vv_{u}) \lra H^{1}(x; \vv_{u})$ and $\res_{l}: H^{1}(M; \vv_{u}) \lra H^{1}(l; \vv_{u})$ are isomorphisms between one-dimensional vector spaces.  With the quantities as defined in the previous paragraph, we call $s_{u} := c_{1u}/c_{2u}$ the \emph{slope} of the representation $\rho_{u}\de \pi_{1}(M) \lra \Aut(\vv)$ for $u \in U$. 
\end{definition}
By construction this quantity varies analytically and is non-zero is some sufficiently small neighborhood of about $u$.  Another way of thinking about the slope is as follows.  Given a point $u \in U$ in hyperbolic Dehn filling space with $\dim H^{1}(M; \vv_{u}) = 1$ and for which the restriction maps onto each circle generated by $x$ and $l$ induce isomorphisms on the cohomology, we have an isomorphism $\res_{x}\circ\res_{l}^{-1}: H^{1}(l;\vv_{u}) \lra H^{1}(x; \vv_{u})$.  If we identify both $H^{1}(l; \vv_{u})$ and $H^{1}(x; \vv_{u})$ with $H^{0}(l;\vv_{u}) = \vv_{u}^{l}$ and $H^{0}(x;\vv_{u}) = \vv_{u}^{x}$, and consider the map $\res_{x}\circ\res_{l}^{-1}$ relative to the basis $a_{u}$ for both $ \vv_{u}^{l} = \vv_{u}^{x} $, we obtain $\res_{x}\circ\res_{l}^{-1}$ is simply scalar multiplication by a non-zero constant.  This constant is the slope of $\rho_{u}\de \pi_{1}(M) \lra \Aut(\vv_{u})$.  
\\
\\
In \cite[Section 6]{Heusener2011Infinitesimal}, Heusener and Porti define a function on hyperbolic Dehn filling space $U$ which we review briefly.  Let $u \in U$ be a point such that $\dim H^{1}(M; \vv_{u}) = 1$.  Choose a smooth non-vanishing vector $a_{u} \in H^{0}(T^{2}; \vv_{u})$ and a $z_{u} \in H^{1}(T^{2}; \vv_{u})$ which forms a basis for the image of $\res_{T^{2}}\de H^{1}(M; \vv_{u}) \lra H^{1}(T^{2}; \vv_{u})$.  Define a smooth non-vanishing element of the first-homology group by
\begin{equation*}\label{homologyelement}
a_{u}\otimes\left(p_{u}x + q_{u}l\right) \in H_{1}(\del M; \vv_{u})
\end{equation*}
where $p_{u}$ and $q_{u}$ are the generalized Dehn filling coefficients of the representation $\rho_{u}$ as defined in Section \ref{ssdehnfill}.  Let $\langle \cdot , \cdot \rangle$ denote the Kronecker pairing between homology and cohomology from Section \ref{cohompairings}.  In Remark 6.2 Heusener and Porti \cite{Heusener2011Infinitesimal} define
\begin{equation}\label{functionf}
f(u) = \langle z_{u}, a_{u}\otimes \left(p_{u}x + q_{u}l\right) \rangle = B\left(z_{u}\left(p_{u}x + q_{u}l\right), a_{u}\right)
\end{equation}
where $B$ denotes the Killing form on $\vv$ and $p_{u}, q_{u}$ are the generalized Dehn filling coefficients corresponding to the representation $u \in U$.  The second equality follows from Poincar\'{e} duality and viewing $z_{u}$ as a map on simplicial chains as in Section \ref{cohompairings}.  Here we are using a scaled version of the function as defined in \cite{Heusener2011Infinitesimal}, nevertheless, we are only interested in showing that $f$ is non-zero at certain points.  This is because of the following specialized case of \cite[Lemma 6.4]{Heusener2011Infinitesimal}.
\begin{lemma}\label{lemmafnonzero}
Let $(p_{u},q_{u})$ be generalized Dehn filling coefficients for a point $u \in U$ such that $\dim H^{1}(M; \vv_{u}) = 1$.  Let $p_{u}$ and $q_{u}$ be relatively prime integers and let $M_{p,q}$ denote the $(p,q)$ Dehn filling of $M$.  If $f(u) \neq 0$, then $H^{1}(M_{p,q},\vv_{u}) = 0$ 
\end{lemma}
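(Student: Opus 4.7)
The plan is to run a Mayer--Vietoris argument on the decomposition $M_{p,q} = M \cup_{T^{2}} N$, where $N \cong D^{2}\times S^{1}$ is the glued-in solid torus whose meridian is $\mu := x^{p}l^{q}$. Pick a dual curve $\lambda$ so that $(\mu,\lambda)$ is a basis of $\pi_{1}(T^{2})$ and $\lambda$ generates $\pi_{1}(N)$ under the inclusion. The first step is to verify that $\vv_{u}$ descends to a legitimate $\pi_{1}(M_{p,q})$-module: since $(p_{u},q_{u}) = (p,q)$ are coprime integers satisfying $p_{u}u + q_{u}v = 2\pi i$, the complex length of $\rho_{u}(\mu)$ is exactly $2\pi i$, so $\rho_{u}(\mu) = I$ in $\PSO(3,1)$ and $\rho_{u}$ factors through $\pi_{1}(M_{p,q})$.

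Next I would compute the twisted cohomology of the glued pieces. Since $\mu$ acts trivially on $\vv$, the relation $[\mu,\lambda] = 1$ forces $(1-\lambda)f(\mu) = 0$ on any crossed homomorphism $f$ representing a class in $H^{1}(T^{2};\vv_{u})$, so $f(\mu) \in \vv^{\lambda}$ while $f(\lambda)$ is unconstrained. Modding out coboundaries $f_{v}(\mu) = 0$, $f_{v}(\lambda) = (1-\lambda)v$ gives
\begin{equation*}
H^{1}(T^{2}; \vv_{u}) \;\cong\; \vv^{\lambda} \,\oplus\, \vv_{\lambda}, \qquad \vv_{\lambda} := \vv/(1-\lambda)\vv.
\end{equation*}
Similarly $H^{1}(N;\vv_{u}) \cong \vv_{\lambda}$, and the restriction $H^{1}(N) \to H^{1}(T^{2})$ identifies with the inclusion of the second summand (since $\mu$ is trivial in $\pi_{1}(N)$). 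On the $H^{0}$ side, triviality of $\mu$ gives $H^{0}(T^{2};\vv_{u}) = H^{0}(N;\vv_{u}) = \vv^{\lambda}$, and the restriction is the identity.

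Plugging into Mayer--Vietoris and using $H^{0}(M;\vv_{u}) = 0$, the surjectivity of $H^{0}(N) \to H^{0}(T^{2})$ kills the connecting map, so $H^{1}(M_{p,q}; \vv_{u})$ becomes the kernel of
\begin{equation*}
\phi\de H^{1}(M; \vv_{u}) \oplus H^{1}(N; \vv_{u}) \lra H^{1}(T^{2}; \vv_{u}).
\end{equation*}
Both sides are two-dimensional. Letting $z_{u}$ generate the one-dimensional $H^{1}(M;\vv_{u})$, its restriction to $T^{2}$ is the pair $(z_{u}(\mu),[z_{u}(\lambda)]) \in \vv^{\lambda}\oplus \vv_{\lambda}$, while $H^{1}(N;\vv_{u})$ contributes only to the $\vv_{\lambda}$ summand. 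A short kernel chase shows $\phi$ is injective if and only if the $\vv^{\lambda}$-coordinate $z_{u}(\mu)$ is nonzero.

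Finally, I would tie this back to $f(u)$. Since $\vv^{\lambda}$ is one-dimensional and spanned by $a_{u}$, nonvanishing of $z_{u}(\mu)$ is equivalent to $B(z_{u}(\mu), a_{u}) \neq 0$ by non-degeneracy of the Killing form. Expanding $z_{u}(x^{p_{u}}l^{q_{u}})$ by the crossed-homomorphism rule produces geometric progressions $\sum_{i} x^{i}z_{u}(x)$ and $x^{p_{u}}\sum_{j}l^{j}z_{u}(l)$ which collapse once paired with $a_{u}$, using $B(\Ad\rho_{u}(\gamma)\,v, a_{u}) = B(v,a_{u})$ for $\gamma \in \pi_{1}(T^{2})$, yielding
\begin{equation*}
B(z_{u}(\mu), a_{u}) \;=\; p_{u}\,B(z_{u}(x), a_{u}) + q_{u}\,B(z_{u}(l), a_{u}) \;=\; f(u).
\end{equation*}
Hence $f(u)\neq 0$ forces $z_{u}(\mu)\neq 0$, so $\phi$ is injective and $H^{1}(M_{p,q};\vv_{u}) = 0$. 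The main obstacle is keeping the bookkeeping consistent when identifying $H^{1}(T^{2};\vv_{u})\cong \vv^{\lambda}\oplus\vv_{\lambda}$ and tracking how the restrictions from $M$ and from $N$ sit inside this splitting; once these identifications are set up, the Mayer--Vietoris sequence together with the Killing-form computation finishes the argument essentially formally.
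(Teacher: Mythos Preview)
Your argument is correct. The paper does not supply its own proof of this lemma; it simply invokes \cite[Lemma 6.4]{Heusener2011Infinitesimal}. The Mayer--Vietoris route you outline --- splitting $M_{p,q} = M \cup_{T^2} N$, identifying $H^{1}(T^{2};\vv_{u}) \cong \vv^{\lambda}\oplus \vv_{\lambda}$ via the basis $(\mu,\lambda)$ in which $\mu$ acts trivially, and reducing the vanishing of $H^{1}(M_{p,q};\vv_{u})$ to the nonvanishing of the $\vv^{\lambda}$-component $z_{u}(\mu)$ --- is exactly the standard argument, and your Killing-form collapse of $B(z_{u}(x^{p}l^{q}),a_{u})$ to $p\,B(z_{u}(x),a_{u}) + q\,B(z_{u}(l),a_{u}) = f(u)$ is clean.

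One small remark: your parenthetical ``both sides are two-dimensional'' implicitly assumes $\dim \vv^{\lambda} = 1$, which would require the core geodesic $\rho_{u}(\lambda)$ to have rotational part not a multiple of $\pi$ (cf.\ Lemma~\ref{circdim}). This is not guaranteed by the hypotheses. Fortunately your kernel chase does not actually use this dimension count: the key point is only that $H^{1}(N;\vv_{u})$ maps into the $\vv_{\lambda}$ summand, so any element $(c\,z_{u},w) \in \ker\phi$ forces $c\,z_{u}(\mu) = 0$ in $\vv^{\lambda}$, and $f(u) = B(z_{u}(\mu),a_{u}) \neq 0$ gives $z_{u}(\mu)\neq 0$ regardless of $\dim\vv^{\lambda}$. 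So the argument stands; just drop or qualify the dimension remark.
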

For the remainder of this subsection, we aim to rephrase the language of Lemma \ref{lemmafnonzero} in the context of the slope of the representation.  Let $u \in U$ be a point for which both $\res_{x}\de H^{1}(M; \vv_{u}) \lra H^{1}(x; \vv_{u})$ and $\res_{l}: H^{1}(M; \vv_{u}) \lra H^{1}(l; \vv_{u})$ are isomorphisms between one-dimensional vector spaces.  As done in the previous paragraphs, we identify $H^{1}(x;\vv_{u})$ with $H^{0}(x; \vv_{u}) = \vv_{u}^{x}$ by Poincar\'{e} Duality, and the same for $l$.  Under this identification we may unambiguously speak of $\res_{x}(z_{u}) = z_{u}(x)$ and $\res_{l}(z_{u}) = z_{u}(l)$ as both taking values in $\vv_{u}^{x} = \vv_{u}^{l}$.  
\\
\\
We proceed by first analyzing the possible values of $z_{u}(x)$ and $z_{u}(l)$ where $z_{u}: \pi_{1}(T^{2}) \lra \vv$ is a crossed homomorphism.  Observe that
\begin{equation}\label{crossedcom}
0 = z_{u}(xlx^{-1}l^{-1}) = (1- xlx^{-1})z_{u}(x) + x(1-lx^{-1}l^{-1})z_{u}(l) = (1-l)z_{u}(x) + (x-1)z_{u}(l)
\end{equation}
Because both $l$ and $x$ have one-dimensional fixed axes in $\vv_{u}$ generated by $a_{u}$, $z_{u}(x)$ is parallel to $a_{u}$ if and only if $z_{u}(l)$ is parallel to $a_{u}$.  Additionally, both $\res_{x}(z_{u})$ and $\res_{l}(z_{u})$ are non-zero, as the restriction maps are isomorphisms.  By definition of the slope of the representation, we have that $z_{u}(x) = c_{1u}a_{u}$ and $z_{u}(l) = c_{2u}a_{u}$ with neither $c_{1u}$ nor $c_{2u}$ equal to zero.\\
\\
Equation \ref{functionf} then reduces to
\begin{equation*}
f(u) = B\left(z_{u}\left(p_{u}x + q_{u}l\right), a_{u}\right) = p_{u}c_{1u}B(a_{u},a_{u})+ q_{u}c_{2u}B(a_{u},a_{u}) 
\end{equation*}
Because $u \neq 0$,  $B(a_{u},a_{u}) \neq 0$ \cite[Lemma 5.3]{Heusener2011Infinitesimal}, and thus we have that $f(u) = 0$ if and only if $p_{u}(c_{1u}/c_{2u}) + q_{u} = 0$, which is to say that the slope $s_{u}$ of $\rho_{u}\de \pi_{1}(M) \lra \Aut(\vv)$ is equal to $-q_{u}/p_{u}$.  \\
\\
Combining the above results with Lemma \ref{lemmafnonzero}, we have the following Lemma.
\begin{lemma}\label{suffcond}
Let $u \in U$ be a point in hyperbolic Dehn filling space for which $\dim H^{1}(M; \vv_{u}) = 1$ and for which $\res_{x}: H^{1}(M; \vv_{u}) \lra H^{1}(x; \vv_{u})$ and $\res_{l}: H^{1}(M; \vv_{u}) \lra H^{1}(l; \vv_{u})$ are both isomorphisms.  Assume further that $u$'s generalized Dehn filling coefficients are relatively prime integers $(p,q)$.  If the slope of the representation $s_{u} \neq -q/p$, then the closed hyperbolic manifold obtained by Dehn filling along a slope of $p/q$, $M_{p,q}$, is infinitesimally projectively rigid, i.e. $H^{1}(M_{p,q},\vv_{u}) = 0$.  
\end{lemma}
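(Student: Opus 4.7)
The plan is to directly invoke Lemma \ref{lemmafnonzero} by verifying that the hypothesis $s_{u} \neq -q/p$ is exactly what is needed to force $f(u) \neq 0$. Since the preceding paragraphs have essentially carried out the computation in disguise, the proof amounts to assembling those pieces.

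First, I would recall the setup. The hypotheses on the restriction maps to $x$ and $l$ being isomorphisms place us in the regime where the slope $s_{u}$ of Definition \ref{defslope} is well defined. In particular, using the basis vector $a_{u}$ of $\vv_{u}^{x} = \vv_{u}^{l}$ afforded by Lemma \ref{dualmodules}, a generator $z_{u}$ of $H^{1}(M;\vv_{u})$ restricts to the boundary torus as a crossed homomorphism with $z_{u}(x) = c_{1u} a_{u}$ and $z_{u}(l) = c_{2u} a_{u}$, where neither $c_{1u}$ nor $c_{2u}$ vanishes and $s_{u} = c_{1u}/c_{2u}$. The commutator computation in Equation \ref{crossedcom} guarantees this diagonal form is consistent.

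Next, I would substitute these expressions into the formula
\begin{equation*}
f(u) = B\!\left(z_{u}(p_{u}x + q_{u}l),\, a_{u}\right)
\end{equation*}
from Equation \ref{functionf}, using bilinearity of $B$ and the crossed homomorphism property $z_{u}(p_{u} x + q_{u} l) = p_{u} c_{1u} a_{u} + q_{u} c_{2u} a_{u}$. This yields $f(u) = (p_{u} c_{1u} + q_{u} c_{2u}) B(a_{u}, a_{u})$. The generalized Dehn filling coefficients $(p_{u}, q_{u})$ coincide with the prescribed integers $(p,q)$ by assumption, and by \cite[Lemma 5.3]{Heusener2011Infinitesimal} (quoted in the text), $B(a_{u}, a_{u}) \neq 0$ for $u \neq 0$; note that $u \neq 0$ since its generalized Dehn filling coefficients are finite. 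Dividing through by $c_{2u} B(a_{u}, a_{u}) \neq 0$, one sees that $f(u) = 0$ if and only if $p s_{u} + q = 0$, i.e.\ $s_{u} = -q/p$.

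Finally, under the hypothesis $s_{u} \neq -q/p$, we obtain $f(u) \neq 0$, and Lemma \ref{lemmafnonzero} immediately yields $H^{1}(M_{p,q}; \vv_{u}) = 0$, which is the desired infinitesimal projective rigidity. There is no real obstacle in this argument since the heavy lifting has already been done: Lemma \ref{dualmodules} supplies $a_{u}$, Theorem \ref{thmdimeq} and the discussion preceding Definition \ref{defslope} supply the slope formalism, and Lemma \ref{lemmafnonzero} is the black box furnishing rigidity from nonvanishing of $f$. The only thing to be careful about is keeping track of the identification between $H^{1}$ and $H^{0}$ on the circle subgroups via Poincaré duality, so that the Killing form evaluation in $f(u)$ really does compute the Kronecker pairing against $a_{u} \otimes (p x + q l)$.
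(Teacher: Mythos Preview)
Your proposal is correct and follows essentially the same approach as the paper: the lemma is stated in the paper as a summary of the preceding discussion, which computes $f(u) = (p_{u}c_{1u} + q_{u}c_{2u})B(a_{u},a_{u})$, invokes $B(a_{u},a_{u}) \neq 0$ from \cite[Lemma 5.3]{Heusener2011Infinitesimal}, concludes that $f(u) = 0$ iff $s_{u} = -q/p$, and then applies Lemma \ref{lemmafnonzero}. Your write-up reproduces this reasoning faithfully.
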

\end{subsection}

\begin{subsection}{Some Sufficient Conditions}\label{ssssc}
In this subsection we aim to provide some sufficient conditions to satisfy parts of the hypotheses of Lemma \ref{suffcond}.  Specifically, we wish to have sufficient conditions that guarantee that both $H^{1}(M; \vv_{u})$ is 1-dimensional and the induced map $\res_{x}: H^{1}(M; \vv_{u}) \lra H^{1}(x;\vv_{u})$ is an isomorphism.  \\
\\
Let $u \in U$ be a point in hyperbolic Dehn filling space with $\pi_{1}(M)$-module $\vv_{u}$.  Consider the short exact sequence of $x$-modules as defined in Equation \ref{sesv2}.  This short exact-sequence gives rise to a long-exact sequence on cohomology of the cyclic group $\langle x \rangle$.  Explicitly, it is
\begin{equation}\label{exactxv2}
\vv_{u}^{x} \lra (\vv_{u}^{2})^{x} \lra H^{1}(F; \vv_{u})^{x} \lra (\vv_{u})_{x} \lra (\vv_{u}^{2})_{x} \lra H^{1}(F; \vv_{u})_{x}
\end{equation}
By exactness, the first map $\vv_{u}^{x} \lra (\vv_{u}^{2})^{x}$ induced by $i: \vv_{u} \lra \vv_{u}^{2}$ in Equation \ref{exactxv2} is injective.  However, if this map is an isomorphism, then we have $H^{1}(F; \vv_{u})^{x} \lra (\vv_{u})_{x}$ is an isomorphism.  We now prove this in the following lemma.  
\begin{lemma}\label{bothiso}
Let $u \in U$ be a point in hyperbolic Dehn filling space such $\dim H^{0}(x; \vv_{u}) = 1$ and the map $i: \vv_{u}^{x} \lra (\vv_{u}^{2})^{x}$ in Equation \ref{sesv2} is an isomorphism.  Then $\dim H^{1}(M; \vv_{u}) = 1$ and the induced map $H^{1}(M; \vv_{u}) \lra H^{1}(x; \vv_{u})$ is an isomorphism.  
\end{lemma}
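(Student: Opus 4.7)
The approach I would take is to extract everything from the long exact sequence in Equation \ref{exactxv2}, which arises from applying $x$-cohomology to the short exact sequence in Equation \ref{sesv2}. The hypothesis that $i: \vv_u^x \to (\vv_u^2)^x$ is an isomorphism causes this sequence to collapse: by exactness, the map $(\vv_u^2)^x \to H^1(F; \vv_u)^x$ is zero, so the connecting homomorphism $\delta: H^1(F; \vv_u)^x \to (\vv_u)_x$ is injective. Since $\dim \vv_u^x = \dim (\vv_u)_x$ (a standard fact for representations of cyclic groups, also visible from Lemma \ref{circcohom}), this embeds $H^1(F; \vv_u)^x$ into a one-dimensional space. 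Because $H^1(M; \vv_u) \cong H^1(F; \vv_u)^x$ by Theorem \ref{thmdimeq} and $H^1(M; \vv_u) \neq 0$ by the Half-Lives-Half-Dies argument used in the proof of Lemma \ref{injF2}, this pins down $\dim H^1(M; \vv_u) = 1$, giving the first claim.

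The main obstacle, and the step I expect to be the most delicate, is identifying the connecting homomorphism $\delta$ with the restriction map $\res_x$, since $\delta$ is constructed algebraically from the short exact sequence in Equation \ref{sesv2} while $\res_x$ arises from the topological inclusion $S^1 \hookrightarrow M$ corresponding to the meridian. I would perform this identification at the level of crossed homomorphisms. Given a cocycle $f: \pi_1(M) \to \vv_u$, its restriction to $F_2$ is the pair $\tilde f = (f(a), f(b)) \in \vv_u^2$. Applying the cocycle identity to the relations $x \cdot x^{-1}ax = ax$ and $x \cdot x^{-1}bx = bx$ (which follow from the presentation in Equation \ref{fibpres}) yields
\[
(xf)(a) - f(a) = -(1-a)f(x), \qquad (xf)(b) - f(b) = -(1-b)f(x),
\]
which can be rewritten as $x\tilde f - \tilde f = -i(f(x))$ using the formula for $i$ from Equation \ref{sesv2}. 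By the standard construction of the connecting homomorphism, $\delta[\tilde f] = -[f(x)]$ in $(\vv_u)_x \cong H^1(x; \vv_u)$, and $[f(x)]$ is precisely $\res_x[f]$ under the isomorphism from Lemma \ref{circcohom}.

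Once $\delta = -\res_x$ is established, the second claim is immediate: $\delta$ is an injection between one-dimensional spaces, hence an isomorphism, so $\res_x$ is as well. The dimensional analysis of the long exact sequence is routine, and the invocation of Half-Lives-Half-Dies is standard; the only piece that needs any care is the cocycle-level identification of $\delta$ with $\res_x$, which reduces to the short manipulation outlined above.
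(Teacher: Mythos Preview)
Your proposal is correct and follows essentially the same route as the paper: use the long exact sequence in Equation \ref{exactxv2}, collapse it via the hypothesis on $i$, invoke Half-Lives-Half-Dies for nontriviality, and then identify the connecting homomorphism with $\res_x$ by a cocycle computation on the generators $a,b$. The only discrepancy is a harmless sign: from $x\tilde f - \tilde f = -i(f(x))$ one has $(1-x)\tilde f = i(f(x))$, so with the paper's convention $\delta[\tilde f] = +[f(x)]$ rather than $-[f(x)]$; either way $\delta$ and $\res_x$ differ by at most a sign, so the isomorphism conclusion is unaffected.
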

\begin{proof}
Assume that $i: \vv_{u}^{x} \lra (\vv_{u}^{2})^{x}$ is an isomorphism.  Because $u \in U$, $H^{1}(F; \vv_{u})^{x} \neq 0$ by the `Half-Lives Half-Dies Theorem.'  Moreover, $H^{1}(F; \vv_{u})^{x} \lra \vv_{x}$ in Equation \ref{exactxv2} is injective by exactness.  Because $\dim (\vv_{u})_{x} = \dim \vv_{u}^{x} = \dim H^{0}(x; \vv_{u}) = 1$, this means $H^{1}(F;\vv_{u})^{x} \lra (\vv_{u})_{x}$ is an isomorphism.  Note we also have a natural isomorphism $(\vv_{u})_{x} \lra H^{1}(x; \vv_{u})$ which sends $v + \im(1-x)$ to the cohomology class of the crossed homomorphism $f: \pi_{1}(x) \lra \vv$ via $f(x) = v$.  Composing these two isomorphisms yields an isomorphism from $H^{1}(F; \vv_{u})^{x} \lra H^{1}(x; \vv_{u})$.  Finally, we pre-compose this isomorphism with the one given in Equation \ref{RedSeq} to yield $H^{1}(M; \vv_{u}) \lra H^{1}(x; \vv_{u})$.  \\
\\
We claim this map is equal to the restriction map $\res_{x}: H^{1}(M; \vv_{u}) \lra H^{1}(x; \vv_{u})$.  To see this we inspect the definition of $H^{1}(F; \vv_{u}) \lra \vv_{x}$.  Running through the compositions, we take a cohomology class represented by the cocycle $f: \pi_{1}(M) \lra \vv$.  We restrict this to the cochomology class in $H^{1}(F; \vv_{u})$ represented by the cocycle $f: \pi_{1}(F) \lra \vv$.  By construction, this class is fixed under the $x$-action on $H^{1}(F; \vv_{u})$.  \\
\\ 
By definition, the connecting map $H^{1}(F; \vv_{u})^{x} \lra (\vv_{u})_{x}$ takes the cohomology class represented by $f$ to the vector $w + \im(1-x) \in (\vv_{u})_{x}$ such that $(1-x)f = i(w)$ where $i: \vv_{u} \lra \vv_{u}^{2}$.  By construction of the connecting map, the vector $w \in \vv$ is well-defined up to a choice of $\im(1-x)$.  Let us observe that $f$ is determined by its action on both $a$ and $b$, and by definition,
\begin{equation}
(xf)(a) = xf(x^{-1}ax) = f(a) + (a-1)f(x) \text{ and } (xf)(b) = f(b) + (b-1)f(x)
\end{equation}
as $f$ is extendible from $\pi_{1}(F)$ to $\pi_{1}(M)$ by construction.  Thus $(1-x)f = i(f(x))$, and thus the connecting map takes $f \in H^{1}(F; \vv_{u})$ to $f(x) + \im(1-x) \in \vv_{u}$.  Consequently, the composition is equal to $\res_{x}\de H^{1}(M; \vv_{u}) \lra H^{1}(x; \vv_{u})$ as claimed, and thus, we see that if $\dim H^{1}(M; \vv_{u}) = 1$ and the map $\vv_{u}^{x} \lra (\vv_{u}^{2})^{x}$ is an isomorphism, then $\res_{x}: H^{1}(M; \vv_{u}) \lra H^{1}(x; \vv_{u})$ is an isomorphism, and in particular, $\dim H^{1}(M; \vv_{u}) = 1$.  
\end{proof}
We now provide a sufficient condition to guarantee the map $\vv^{x} \lra (\vv^{2})^{x}$ is an isomorphism.  
\begin{lemma}\label{isov2}
Let $u \in U$ be a point in hyperbolic Dehn filling space such $\dim H^{0}(x; \vv_{u}) = 1$.  We have that $\dim (\vv_{u}^{2})^{x} = \dim \ker (1+a+b^{-1}-xb^{-1}a-x^{-1}ba) = \dim \ker \frac{\del w}{\del y}$ where $\frac{\del w}{\del y}$ is the Fox-derivative of the word $w = axa^{-1}y^{-1}$, where $a = xy^{-1}x^{-1}y$, with respect to $y$ as in Equation \ref{wirt}.  Assume further that $\dim \ker \frac{\del w}{\del y} = 1$, then $\vv^{x} \lra (\vv^{2})^{x}$ is an isomorphism.  
\end{lemma}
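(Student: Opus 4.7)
The plan is to compute $\dim (\vv_u^2)^x$ in two ways---via the explicit $x$-action on cocycles from Equation \ref{inducedaction} and via Fox calculus applied to the Wirtinger relator---and then promote the injection $i\colon \vv_u^x \to (\vv_u^2)^x$ to an isomorphism by a dimension count.

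First I would unpack the fixed-point conditions for the $x$-action on $\vv_u^2$. Writing $(V,W) = (f(a), f(b))$, the second equation from Equation \ref{inducedaction} reads $xa^{-1}V + (1 - xa^{-1})W = 0$ and solves to $V = (1 - ax^{-1})W$. Substituting into the first equation produces a single operator acting on $W$, namely $(1 - xb^{-1} - xb^{-1}a)(1 - ax^{-1}) + xb^{-1}$. Using the fiber-bundle relations from Equation \ref{fibpres}, and in particular their consequences $ax^{-1} = x^{-1}ba$, $xb^{-1}ax^{-1} = b^{-1}$, and $xb^{-1}a^2x^{-1} = a$, this operator collapses to exactly $1 + a + b^{-1} - xb^{-1}a - x^{-1}ba$. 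Since $V$ is uniquely recovered from $W$, the projection $(V,W) \mapsto W$ identifies $(\vv_u^2)^x$ with $\ker(1 + a + b^{-1} - xb^{-1}a - x^{-1}ba)$, giving the first claimed equality.

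For the second equality, a direct Fox-calculus computation with $w = axa^{-1}y^{-1}$, $a = xy^{-1}x^{-1}y$, and the relator simplification $axa^{-1} = y$ yields
\[
\frac{\partial w}{\partial y} = (1 - y)(-b + bx^{-1}) - 1
\]
in $\mathbb{Z}[\pi_1(M)]$. Using the identification $y = b^{-1}x$ coming from $b = xy^{-1}$, together with the identity $b^{-1}xbx^{-1}b^{-1} = a$, I expect to verify
\[
\frac{\partial w}{\partial y} \cdot b^{-1} = -\bigl(1 + a + b^{-1} - xb^{-1}a - x^{-1}ba\bigr).
\]
Since $b$ acts invertibly on $\vv_u$, the map $v \mapsto b^{-1}v$ is a linear isomorphism carrying $\ker(1 + a + b^{-1} - xb^{-1}a - x^{-1}ba)$ bijectively onto $\ker \frac{\partial w}{\partial y}$, establishing the second equality.

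Finally, the additional hypothesis $\dim \ker \frac{\partial w}{\partial y} = 1$ forces $\dim (\vv_u^2)^x = 1$. Because $u \in U$ ensures $\vv_u^{F_2} = 0$, the coboundary map $i\colon \vv_u \to \vv_u^2$ is injective; together with the standing hypothesis $\dim H^0(x;\vv_u) = \dim \vv_u^x = 1$, the restriction $i|_{\vv_u^x}\colon \vv_u^x \to (\vv_u^2)^x$ is an injection between one-dimensional spaces, hence an isomorphism. The main obstacle is the algebraic bookkeeping across the two presentations of $\pi_1(M)$: the raw post-substitution operator has many terms involving $x, a, b$ and their inverses, and only through the specific group-ring identities $xb^{-1}a^2x^{-1} = a$ and $b^{-1}xbx^{-1}b^{-1} = a$ does it collapse into the clean five-term form, after which matching with the appropriate translate of $\frac{\partial w}{\partial y}$ requires shuttling between Wirtinger and fiber-bundle generators; once these identities are in place, however, the proof closes cleanly.
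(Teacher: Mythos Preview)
Your proposal is correct and follows essentially the same route as the paper's proof: solve the second fixed-point equation for $V=(1-ax^{-1})W$, substitute into the first to obtain the five-term operator $1+a+b^{-1}-xb^{-1}a-x^{-1}ba$ annihilating $W$, then identify this operator with $\frac{\partial w}{\partial y}$ up to right-multiplication by the unit $-b$, and finish with the dimension count on the injection $\vv_u^{x}\hookrightarrow(\vv_u^2)^{x}$. The only cosmetic difference is that the paper writes out $\frac{\partial w}{\partial y}$ directly in the Wirtinger generators $x,y$ and then checks term-by-term that $(1+a+b^{-1}-xb^{-1}a-x^{-1}ba)\cdot(-b)$ matches it, whereas you package the Fox derivative as $(1-y)(-b+bx^{-1})-1$ and appeal to the fibre-bundle identities (e.g.\ $xb^{-1}a=b^{-1}x$) to close the loop; both amount to the same group-ring computation.
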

\begin{proof}
Here we abusively denote the action of $\Ad \rho_{u}(g)$ on $\vv_{u}$ by $g$.  First note that $w = axa^{-1}y^{-1} = (xy^{-1}x^{-1}y)x(y^{-1}xyx^{-1})y^{-1}$.  A straight forward calculation of the Fox-derivative of $w$ with respect to $y$ yields
\begin{equation*}
\frac{\del w}{\del y} = -xy^{-1} - yxy^{-1}x^{-1} - 1 + yxy^{-1} + xy^{-1}x^{-1} 
\end{equation*}
As before, we identify a crossed homomorphism $f: \pi_{1}(F) \lra \vv$ with pairs of vectors $(f(a),f(b))$ in $\vv_{u}^{2}$.  The $x$-action on $\vv_{u}^{2}$ was calculated in Equation \ref{inducedaction}.  The equality for a fixed point becomes
\begin{equation*}
(xb^{-1}+xb^{-1}a)f(a) - xb^{-1}f(b) = f(a) \text{ and } -xa^{-1}f(a) + xa^{-1}f(b) = f(b)
\end{equation*}
which reduces to 
\begin{equation*}
f(a) = (1-ax^{-1})f(b) \text{ and } (1+a+b^{-1}-xb^{-1}a-x^{-1}ba)f(b) = 0
\end{equation*}
Thus, $\dim (\vv^{2})^{x} = \dim \ker (1+a+b^{-1}-xb^{-1}a-x^{-1}ba)$.  If we pre-compose $1+a+b^{-1}-xb^{-1}a-x^{-1}ba$ by $-b$ we are left with
\begin{align}\label{eqdwdy}
&-b-ab-1+xb^{-1}ab+x^{-1}bab  \nonumber\\
&-xy^{-1} - yxy^{-1}x^{-1} - 1 + yxy^{-1} + xy^{-1}x^{-1} = \frac{\del w}{\del y}
\end{align}
as pre-composition by an invertible linear map doesn't affect the dimension of the kernel, we have $\dim (\vv_{u}^{2})^{x} = \dim \frac{\del w}{\del y}$.  \\
\\
Finally, $i: \vv_{u}^{x} \lra (\vv_{u}^{2})^{x}$ is injective by exactness of Equation \ref{exactxv2}.  By hypothesis $\dim H^{0}(x; \vv_{u}) = \vv_{u}^{x} = 1$, so if $\dim \frac{\del w}{\del y} = 1$, then $i: \vv_{u}^{x} \lra (\vv_{u}^{2})^{x}$ is an isomorphism.  
\end{proof}
\end{subsection}

\begin{subsection}{Cohomology Normal Form}\label{secsymset}
It is the purpose of this section to collect results from the previous subsections and combine them into a collection of computationally verifiable conditions to formally certify infinitesimal rigidity of many $(p,q)$-Dehn fillings on the figure-eight knot complement.  By Lemma \ref{suffcond} it suffices to verify four conditions to validate infinitesimal projective rigidity of the Dehn-surgered $(p,q)$-figure eight knot.  
\begin{enumerate}
\item[1.]  Verify $\dim H^{1}(x; \vv_{u}) = 1$
\item[2.]  Verify $\dim H^{1}(M; \vv_{u}) = 1$.  
\item[3.]  Verify $\res_{x}: H^{1}(M; \vv_{u}) \lra H^{1}(x; \vv_{u})$ and $\res_{l}: H^{1}(M; \vv_{u}) \lra H^{1}(l; \vv_{u})$ are both isomorphisms.
\item[4.]  Verify the slope $s_{u}$ of the representation, as in Definition \ref{defslope}, is not equal to $-q/p$.  
\end{enumerate}
As $(p,q)$ are relatively prime and non-zero, the condition that $H^{0}(x; \vv_{u})$ is satisfied as $\rho_{u}(x)$ is non-trivially loxodromic with non-zero translation and rotation so Lemma \ref{circdim} guarantees $\dim \vv_{u}^{x} = 1$.  The condition that $\dim H^{1}(M; \vv_{u}) = 1$ can be verified by use of Lemma \ref{bothiso} and Lemma \ref{isov2}.  Guaranteeing that $\res_{x}: H^{1}(M; \vv_{u}) \lra H^{1}(x; \vv_{u})$ is an isomorphism can similarly be verified by the use of these lemmas.  The condition that $\res_{l}: H^{1}(M; \vv_{u}) \lra H^{1}(l; \vv_{u})$ is an isomorphism can be verified by calculating a formally certified bound on the slope of the representation.  Provided the slope is non-zero and lies sufficiently far away from $-q/p$, then we may guarantee rigidity.  \\
\\
Before proceeding, let us recall we identify the cocycles $Z^{1}(M; \vv_{u})$ with the kernel of the map $\vv \times \vv \lra \vv$ that takes a pair of vectors $(v,w)$ to $\frac{\del w}{\del x}v + \frac{\del w}{\del y}w$ where $\frac{\del w}{\del x}$ and $\frac{\del w}{\del y}$ are the Fox-derivatives of the word 
\begin{equation}\label{definingword}
w = axa^{-1}y^{-1} = xy^{-1}x^{-1}yxy^{-1}xyx^{-1}y^{-1}
\end{equation}
This identification comes from the association between a crossed homomorphism $f: \pi_{1}(M) \lra \vv$ and sending $f$ to its evaluation on $x$ and $y$, $(f(x),f(y)) \in \vv\times \vv$.  The coboundaries $B^{1}(M; \vv_{u})$ are identified with the image of the map $i: \vv \lra \vv\times \vv$ taking a vector $v \in \vv$ to $\left((1-x)v,(1-y)v\right)$.  The expository article by Goldman illustrates this association in thorough detail \cite{Goldman2020Parallelism}.  \\
\\
If we are able to verify $\dim \ker \frac{\del w}{\del y} =1 $, then by Lemma \ref{isov2} and Lemma \ref{bothiso}, $\dim H^{1}(M; \vv_{u}) = 1$ and $\res_{x}: H^{1}(M; \vv_{u}) \lra H^{1}(x; \vv_{u})$ is an isomorphism.  We may use this to then find a normal form for a generator of $H^{1}(M; \vv_{u})$. \\
\\
In the case that $\dim \ker \frac{\del w}{\del y} = 1$, then $\ker \frac{\del w}{\del y}$ is generated by the vector $(1-y)a_{u}$ where $a_{u} \in \vv$ is smooth choice of a generator for $H^{0}(T^{2}; \vv_{u})$.  The vector $(1-y)a_{u}$ generates $\ker \frac{\del w}{\del y}$ because
\[
 \frac{\del w}{\del x}(1-x)a_{u} +  \frac{\del w}{\del y}(1-y)a_{u} =   \frac{\del w}{\del y}(1-y)a_{u} = 0
\]
Because $\dim \ker \frac{\del w}{\del y} = 1$, this means we may find a representative of the cohomology $[(v,m)] \in H^{1}(M; \vv_{u}) =  Z^{1}(M; \vv_{u})/B^{1}(M; \vv_{u})$ with $v \neq 0$.  Because $B^{1}(M; \vv_{u})$ is 9-dimensional, and $H^{1}(M; \vv_{u})$ is 1-dimensional, the cocycles $Z^{1}(M; \vv_{u})$ must be 10-dimensional.  The intersection $Z^{1}(M; \vv_{u}) \cap 0\times \vv = 0 \times \ker \frac{\del w}{\del y}$ consists entirely of coboundaries and thus, every element in $H^{1}(M; \vv_{u})$ of the form $[(0,m)]$ is trivial.  \\
 \\
However, if we choose a cohomology representative $(v,m) \in Z^{1}(M; \vv_{u})$ with $v \neq 0$, we may decompose $v = sa_{u} + (1-x)u$ for some non-zero scalar $s \in \R$ and $u \in \vv$.  Subtracting the coboundary part $(1-x)u$, and scaling out by $s \neq 0$, we may normalize our cohomology representative to be of the form $[(a_{u},m)] \in H^{1}(M; \vv_{u})$.  Let us denote the map on 1-chains induced by $[(a_{u}, m)]$ by $z_{u}$.  \\
\\
Since $z_{u}(x) = a_{u}$, Equation \ref{crossedcom} yields that $z_{u}(l) = ta_{u}$ for some $t \in \R$.  Provided $t \neq 0$, this would imply the induced map on $H^{1}(M; \vv_{u}) \lra H^{1}(l; \vv_{u})$ is an isomorphism as $\res_{l}: H^{1}(M; \vv_{u}) \lra H^{1}(l ; \vv_{u})$ will take $[(a_{u},m)]$ to $ta_{u}$.  Moreover if $t\neq 0$, then $t = s_{u}^{-1}$ where $s_{u}$ is the slope of the representation $\rho_{u}\de \pi_{1}(M) \lra \PSL(2,\C)$ as defined in Definition \ref{defslope}.  So long as $s_{u} \neq -q/p$, then Conditions $(1), (2), (3)$ and $(4)$ as stated in the beginning of this subsection will be verified.  We summarize the results of this analysis in the following theorem.  
\begin{theorem}\label{thmrigid}
Let $u \in U$ be a point in hyperbolic Dehn filling space for which $\dim \frac{\del w}{\del y} = 1$ and $\dim H^{0}(x; \vv_{u}) = 1$.  Then $\dim H^{1}(M; \vv_{u}) = 1$ and $\res_{x}\de H^{1}(M; \vv_{u}) \lra H^{1}(x; \vv_{u})$ is an isomorphism.  Additionally, there exists a be a smooth local choice of basis elements $a_{u}$ in $H^{0}(T^{2}; \vv_{u})$ and a cohomology class $z_{u} \in H^{1}(M; \vv_{u})$ for which $z_{u}(x) = a_{u}$ and $z_{u}(l) = ta_{u}$ where $t$ is some real number.  If $t$ is non-zero, then $t = s_{u}^{-1}$ where $s_{u}$ is the slope of the representation $\pi_{1}(M) \lra \PSL(2,\C)$.  In this case, if $s_{u} \neq -q/p$, then the Dehn-surgered manifold $M_{p,q}$ is infinitesimally projectively rigid, and $H^{1}(M_{p,q}, \vv_{u}) = 0$.   
\end{theorem}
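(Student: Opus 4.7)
The plan is to derive the four hypotheses of Lemma \ref{suffcond} from the two stated assumptions, with the bulk of the work being a normal form construction that makes the slope visible directly.

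First, I would apply Lemma \ref{isov2}: the assumption $\dim \ker \frac{\del w}{\del y} = 1$ combined with $\dim H^{0}(x;\vv_{u}) = 1$ gives that the inclusion $i\de \vv_{u}^{x} \lra (\vv_{u}^{2})^{x}$ is an isomorphism. Feeding this into Lemma \ref{bothiso} immediately yields $\dim H^{1}(M;\vv_{u}) = 1$ and that $\res_{x}\de H^{1}(M;\vv_{u}) \lra H^{1}(x;\vv_{u})$ is an isomorphism, handling conditions (1) and (2) of Lemma \ref{suffcond} and the first two assertions of the theorem.

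Next I would produce the normal form. Identify $Z^{1}(M;\vv_{u})$ with pairs $(v,m) \in \vv \times \vv$ annihilated by $(v,m) \mapsto \frac{\del w}{\del x}v + \frac{\del w}{\del y}m$, and $B^{1}(M;\vv_{u})$ with the image of the coboundary map $u \mapsto ((1-x)u,(1-y)u)$. Since the coboundary $\del a_{u} = (0,(1-y)a_{u})$ is automatically a cocycle and $\ker \frac{\del w}{\del y}$ is one-dimensional, this kernel is spanned by $(1-y)a_{u}$; hence every cocycle of the form $(0,m)$ lies in $0 \times \ker \frac{\del w}{\del y}$ and equals $\del(s a_{u})$ for some scalar $s$, so is a coboundary. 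Consequently any nontrivial class in $H^{1}(M;\vv_{u})$ has a representative with nonzero first entry $v$. Decomposing $v = s a_{u} + (1-x)w$ via $\vv_{u} = \vv_{u}^{x} \oplus \im(1-x)$, subtracting the coboundary $\del w$, and rescaling by $1/s$, I obtain a representative $[(a_{u},m)]$, and the associated crossed homomorphism $z_{u}$ satisfies $z_{u}(x) = a_{u}$.

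For the third conclusion, Equation \ref{crossedcom} applied to $z_{u}$ on $\pi_{1}(T^{2})$ gives $(1-l)z_{u}(x) + (x-1)z_{u}(l) = 0$. Since $a_{u} \in \vv_{u}^{l}$ by Lemma \ref{dualmodules}, the first term vanishes, so $(x-1)z_{u}(l) = 0$ and $z_{u}(l)$ lies in the one-dimensional fixed space $\vv_{u}^{x} = \R a_{u}$; write $z_{u}(l) = t a_{u}$. If $t \neq 0$, then $\res_{l}$ sends the generator to $t a_{u} \neq 0$ and is therefore an isomorphism, which is condition (3) of Lemma \ref{suffcond}. Reading Definition \ref{defslope} with $c_{1u} = 1$ and $c_{2u} = t$ gives the slope $s_{u} = 1/t$, i.e. $t = s_{u}^{-1}$. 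Finally, if $s_{u} \neq -q/p$, condition (4) of Lemma \ref{suffcond} holds and we conclude $H^{1}(M_{p,q};\vv_{u}) = 0$. The main subtlety is the normal form construction in paragraph two; once it is in place, the identification of the slope and the invocation of Lemma \ref{suffcond} are essentially formal.
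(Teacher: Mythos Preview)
Your proposal is correct and follows essentially the same approach as the paper: invoke Lemmas \ref{isov2} and \ref{bothiso} to get $\dim H^{1}(M;\vv_{u})=1$ and that $\res_{x}$ is an isomorphism, build the normal form $[(a_{u},m)]$ by observing that $\ker\frac{\del w}{\del y}$ is spanned by $(1-y)a_{u}$ so that cocycles with first entry zero are coboundaries, then use Equation \ref{crossedcom} to extract $z_{u}(l)=ta_{u}$, identify $t=s_{u}^{-1}$ from Definition \ref{defslope}, and conclude via Lemma \ref{suffcond}. The only omission is the one-line observation that the scalar $s$ in $v=sa_{u}+(1-x)w$ must be nonzero (else subtracting $\del w$ reduces to a trivial class), but the paper itself treats this just as tersely.
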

In the next section we employ machinery to create certified numerical estimates on the slope $s_{u}$ to formally verify many Dehn-surgeries are infinitesimally projectively rigid.  
\end{subsection}
\end{section}

\begin{section}{Interval Analysis}\label{ia}
In this section we provide experimentally verified calculations to show that the cohomology groups $H^{1}(M_{p,q}; \vv_{u}) = 0$ are trivial for many relatively prime $p,q$.  Our calculations will implement the use of Matlab and the Interval Arithmetic package INTLAB by Rump \cite{Ru99a}, however informal calculations can be found in the Mathematica file \texttt{PRFE\textunderscore Mathematica.nb}.  The calculations carried out in this section are all done relative to the basis $\beta = \{v_{1}, \hdots, v_{9}\}$ provided in Equation \ref{vbasis}.  
\begin{subsection}{Preliminaries}\label{intprelim}
Here we summarize some basic properties regarding interval arithmetic.  We direct the interested reader to Moore, Kearfott, and Cloud's Interval Analysis text for more thorough details \cite{Moore2009Introduction}.  \\
\\
For the purpose of our context, let us restrict temporarily to the real numbers.  The typical closed interval in $\R$ is denoted $[a,b]$ for $a \leq b$ and is the set of all $x \in \R$ satisfying $a \leq x \leq b$.  We say an interval is degenerate if $a = b$.  Let us say we have some quantities $x,y$ that we cannot say precisely what they are, but we may numerically approximate them and say, with certainty, they lie in some intervals $[a,b]$ and $[c,d]$ respectively.  Then we can say, with certainty, that 
\begin{equation*}
a \leq x \leq b \text{ and } c \leq y \leq d \implies a + c \leq x + y \leq b + d
\end{equation*}
Similarly, we can say that if $y \in [c,d]$ then $-y \in [-d,-c]$, so that $a-d \leq x - y \leq b - c$.  This gives us a notion of addition and subtraction on intervals in this way.  Specifically we define
\begin{equation*}
[a,b] + [c,d] := [a+c,b+d] \text{ and } [a,b] - [c,d] := [a,b] + \left(-[c,d]\right) = [a-d,b-c]
\end{equation*}
Multiplication on intervals is defined by
 \begin{equation*}
[a,b] \cdot [c,d] := \left[ \min{S}, \max{S} \right] \text{ where }S = \{ac, ad, bc,bd\}
\end{equation*}
One can go through all possible cases of signs on $a,b,c,d$ to verify this definition guarantees that if $x \in [a,b]$ and $y \in [c,d]$, then $xy \in [a,b]\cdot [c,d]$.  We may also define the inverse of an interval that does not contain $0$ in the straight forward manner
\begin{equation*}
1/[a,b] := \left[1/b, 1/a\right]
\end{equation*}
and then division on intervals is defined by
\begin{equation*}
[a,b]/[c,d] := [a,b]\cdot \left(1/[c,d]\right)
\end{equation*}
The operations of addition and multiplication turn the set of all intervals into a semi-ring and constitute the most basic operations of \emph{interval arithemetic} which is an indispensable tool to certify zeros of a function lie inside an interval.  Methods such as the Interval Newton Method or Krawczyk's Method can be used verify whether or not a zero of a function is contained within an interval \cite[Chapter 8]{Moore2009Introduction}.  In fact, the latter is how SnapPy verifies the intervals for shapes of ideal tetrahedra \cite{SNAPKrawczyk}.  \\
\\
The operations above can be extended to the complex numbers.  In this situation the intervals are replaced with either boxes or disks depending on which convention is adopted.   If boxes are used, multiplication and division are not so easily defined whereas addition and subtraction are.  On the other hand, if disks are used, multiplication and division are easily defined, whereas addition and subtraction are more difficult.  As it so happens, SnapPy uses an interval package that adopts the rectangular boxes so this method will be reflected in our later estimates.\\
\\
We conclude this subsection with a brief review of \emph{interval matrix algebra}.  By an \emph{interval matrix} $\bf{A}$, we mean an $m \times n$-array of $a_{ij}$ where each $a_{ij}$ is an interval.  \emph{Interval vectors} are defined similarly.  With these two definitions we may define an interval linear system $\bf{A}x = \bf{b}$ where $\bf{A}$ is an $n \times n$ interval matrix and $\bf{b}$ is an $n\times 1$ interval vector.  There are several inequivalent notions of the solution set $\bf{x}$ of an interval matrix system.  We summarize the three popular ones as defined in \cite{Shary1995On}.
\begin{enumerate}
\item The \emph{tolerable solution set} is the set of all solutions such that $Ax$ lands inside of $\bf{b}$ for any choice of $A \in \bf{A}$.  It is denoted 
\[
\sum_{\forall\exists}\left(\bf{A},\bf{b}\right) = \left\{ x \in \R^{n} \, | \, \left(\forall A \in \bf{A}\right)\left(\exists b \in \bf{b} \right)\left(Ax = b\right) \right\}
\]
\item The \emph{controllable solution set} which is the set of all vectors $x \in \R^{n}$ such that for any $b \in \bf{b}$ we can find an $A \in \bf{A}$ so that $Ax = b$.  It is denoted
\[
\sum_{\exists\forall}\left(\bf{A},\bf{b}\right) = \left\{ x \in \R^{n} \, | \, \left(\forall b \in \bf{b}\right)\left(\exists A \in \bf{A} \right)\left(Ax = b\right) \right\}
\]
\item Finally, the \emph{united solution set} is the solution set of all possible solutions $Ax = b$ with $A \in \bf{A}$ and $b\in \bf{b}$.  It is denoted
\[
\sum_{\exists\exists}\left(\bf{A},\bf{b}\right) = \left\{ x \in \R^{n} \, | \, \left(\exists A \in \bf{A}\right)\left(\exists b \in \bf{b} \right)\left(Ax = b\right) \right\}
\]
\end{enumerate}

The united solution set is far and away the most studied and well understood solution set.  As a preliminary observation, note that if there exists a singular matrix $A \in \bf{A}$, and a $b \in \bf{b}$ for which there exists a solution to $Ax =b$, then the united solution set is unbounded.  This behavior can be avoided provided a non-singularity condition be imposed on the interval matrix $\bf{A}$.  
\begin{definition}\label{regular}
An $n\times n$ interval matrix $\bf{A}$ is called \emph{regular}, if each $A \in \bf{A}$ is non-singular.  
\end{definition}
Under these conditions, we have that the united solution set to $\bf{A}x = \bf{b}$ is a bounded subset of $\R^{n}$.  While INTLAB typically does not calculate the exact united solution set, it provides enclosures to it via the \texttt{verifylss} command.   
\end{subsection}

\begin{subsection}{Interval Analysis Algorithm}\label{iaa}
Recall the gluing diagram in Figure \ref{figgluingdiag}.  The shapes of the ideal polyhedra $z_{1}$ and $z_{2}$ satisfy the gluing equations in Equations \ref{snapglue1} and \ref{snapglue2}.  Let $p$ and $q$ be relatively prime integers between $1$ and $60$ such that $p/q$ is not an exceptional slope, i.e. $(p,q) \notin \{(1,1),(2,1),(3,1),(4,1)\}$.  Here we may inspect strictly positive $p$ and $q$, as $M_{-p,-q}$ is homeomorphic to $M_{p,q}$.  Because $M$ is amphicheiral, we also have that $M_{p,q}$ is homeomorphic to $M_{-p,q}$, so we restrict our attention to $p$ and $q$ that are strictly positive.  \\
\\
Let $z_{1}$ and $z_{2}$ be chosen so that $pu + qv - 2\pi i = 0$ where $u = \Log\left(z_{1}(1-z_{2})\right)$ and $v = 2\Log\left(-(1-z_{2})^{2}/z_{2}\right)$ as in Equation \ref{complexlen}.  Through the use of SnapPy we may approximate $z_{1}$ and $z_{2}$ to high precision.  The files \texttt{SNAPPY\textunderscore Shapes\textunderscore Mathematica.nb} and \texttt{Snappy\textunderscore Shapes.m} both contain the data of all such shapes $(z_{1},z_{2})$ split into their real and imaginary parts certifiably up to 16 digits of precision for each pair of $(p,q)$ with $1\leq p,q \leq 60$ relatively prime and non-exceptional.   It is worth mentioning, even though the equation $pu + qv - 2\pi i = 0$ is non-linear in $z_{1}$ and $z_{2}$, Krawczyk's Method or Newton's are highly effective in this situation as the derivatives in $z_{1}$ and $z_{2}$ of $u$ and $v$ are \emph{rational} functions of $z_{1}$ and $z_{2}$, and thus very well suited to for the use of interval arithmetic.  \\
\\
With these preliminary notions established, we may apply them to our situation to computationally verify that these manifolds $M_{p,q}$ are projectively rigid.  Let $\rho_{u}\de \pi_{1}(M) \lra \PSL(2,\C)$ denote the representation associated to the $(p,q)$ Dehn filling of $M$.  Because neither $p$ nor $q$ is zero, $u$ has non-trivial translation and rotation, thus $\dim H^{0}(x; \vv_{u}) = 1$.  By Theorem \ref{thmrigid}, it suffices to verify two conditions.  
\begin{enumerate}\label{eqsimplecond}
\item[1.] Verify $\frac{\del w}{\del y}$ is rank 8.  
\item[2.] Verify the slope of the representation is non-zero and not equal to $-q/p$. 
\end{enumerate}
These two statements are rigorously verified via the function \texttt{rigcheck.m} whose input is a pair of relatively prime $(p,q)$ where $1 \leq p,q \leq 60$ such that $p/q$ is a non-exceptional slope.  \texttt{rigcheck.m} references the list \texttt{Snappy\underscore Shapes.mat} which associates to each $(p,q)$ four real intervals, $\bf{z_{1x}},\bf{z_{1y}},\bf{z_{2x}}$ and $\bf{z_{2y}}$.  Each interval contains the real and imaginary parts of $z_{1}$ and $z_{2}$ as defined in Figure \ref{figgluingdiag} and the equation $pu + qv = 2\pi i$ where $u$ and $v$ are defined by Equation \ref{complexlen}.  These shapes are accurate up to 16 digits of precision and obtained via SnapPy.  The file \texttt{SNAPPY\underscore instructions.rtf} includes instructions on how to obtain these shapes within SnapPy.  \\
\\
With each such $(p,q)$, we calculate interval matrices via Equation \ref{eqadact} that are guaranteed to contain $\Ad \rho_{u}(x)$ and $\Ad \rho_{u}(y)$ where $\rho_{u}: \pi_{1}(M) \lra \PSO(3,1)$ is the representation corresponding to the $(p,q)$ Dehn-surgery of $M$.  The fact that $\Ad \rho_{u}(x)$ and $\Ad \rho_{u}(y)$ are contained in their interval counterparts $\bf{Ad} \bf{\rho}_{u}(x)$ and $\bf{Ad} \bf{\rho}_{u}(y)$ is a consequence of our representation $\rho_{u}$ being \emph{rational} in the real and imaginary parts of $z_{1}$ and $z_{2}$.  All the calculations in the entries of $\Ad \rho_{u}(x)$ and $\Ad \rho_{u}(y)$ are rational expressions of $z_{1x}, z_{1y}, z_{2x}$ and $z_{2y}$.  By the operations of interval arithmetic summarized in Section \ref{intprelim}, $\Ad \rho_{u}(x)$ and $\Ad \rho_{u}(y)$ must therefore by contained in $\bf{Ad} \bf{\rho}_{u}(x)$ and $\bf{Ad} \bf{\rho}_{u}(y)$.  To ease notation, we abusively denote the matrices $\Ad \rho_{u}(x)$ and $\Ad \rho_{u}(y)$ by $x$ and $y$ and their interval counterparts by $\bf{x}$ and $\bf{y}$.  \\
\\
To verify that $\frac{\del w}{\del y}$ is rank 8, it suffices to find a rank 8 sub-matrix.  By Equation \ref{eqdwdy}, $\frac{\del w}{\del y}$ is also a matrix that is some rational expression in the real and imaginary parts of $z_{1}$ and $z_{2}$.  We calculate interval matrix containing $\frac{\del w}{\del y}$ by calculating each summand of Equation \ref{eqdwdy}, and then summing those together.  Each summand may be found in the \texttt{Symbolic\underscore Matlab \underscore Matrices} folder.  For example, the Matlab function $\texttt{YXinvY.m}$ inputs the intervals $\bf{z_{1x}}, \bf{z_{1y}},\bf{z_{2x}}$ and $\bf{z_{2y}}$, and returns an interval matrix that certifiably contains the matrix $yx^{-1}y$.  We apply the same process to calculate certified interval bounds on the quantities $\bf{\frac{\del w}{\del x} }$, $\bf{\frac{\del l}{\del x} }$, $\bf{\frac{\del l}{\del y} }$, and $\bf{a_{u}}$ as well. \\
\\
Verifying that $\frac{\del w}{\del y}$ is rank 8 can be done by verifying that $\bf{\frac{\del w}{\del y} }$ contains an $8\times 8$-regular submatrix.  For simplicity, we simply choose the top-left $8\times 8$-submatrix of $\bf{\frac{\del w}{\del y} }$ and verify regularity via INTLAB's \texttt{isregular()} command.  In \texttt{rigcheck.m}, if the regularity check fails to pass, then the function ceases as Theorem \ref{thmrigid} can only be applied in the situation where $\frac{\del w}{\del y}$ is rank 8 which guarantees that both $H^{1}(M; \vv_{u})$ is dimension 1 and $\res_{x}: H^{1}(M; \vv_{u}) \lra H^{1}(x; \vv_{u})$ is an isomorphism.\\
\\
Once the condition that $\frac{\del w}{\del y}$ is rank 8 has been verified, \texttt{rigcheck.m} proceeds to calculating a certified bound on the slope of the representation.  We proceed by calculating the normal form for the cohomology class given by $[(a_{u},m)] \in H^{1}(M; \vv_{u})$ as in Section \ref{secsymset}.  We know that $a_{u}$ and $m$ satisfy the equation 
\begin{equation}\label{cocyclecond}  
\frac{\del w}{\del x}a_{u} + \frac{\del w}{\del y}m = 0
\end{equation}
So in particular $\frac{\del w}{\del y}m = -\frac{\del w}{\del x}a_{u}$.   The difficulty Equation \ref{cocyclecond} is that the matrix $\frac{\del w}{\del y}$ is singular.  Its kernel is generated by $(1-y)a_{u}$, and thus the solution set to Equation \ref{cocyclecond} is an unbounded set.  To circumvent this difficulty, we restrict to the solution set of $\frac{\del w}{\del y}x = -\frac{\del w}{\del x} a_{u}$ by intersecting it with the hyperplane $x_{9} = -1$ where here we are representing both vectors and matrices relative to the basis in Equation \ref{vbasis}.  As $(1-y)a_{u}$ is not parallel to the hyperplane $x_{9} = -1$, we may always guarantee a \emph{unique} solution to $\frac{\del w}{\del y}x = -\frac{\del w}{\del x} a_{u}$ with $x_{9} = -1$ as $\frac{\del w}{\del y}$ is rank 8.  Let $\frac{\del w}{\del y}_{8\times 8}$ denote the top-left $8\times 8$ submatrix of $\frac{\del w}{\del y}$.  We now consider the following equation
\begin{equation}\label{newcocyc}
\frac{\del w}{\del y}_{8\times 8} m_{8\times 1} = -\left(\frac{\del w}{\del x}a_{u} \right)_{8\times 1} + \frac{\del w}{\del y}_{*\times 9}
\end{equation}
where $\frac{\del w}{\del y}_{*\times 9}$ denotes the first $8$ rows of the $9$-th column of $\frac{\del w}{\del y}$.  The solution to Equation \ref{newcocyc} will be the first 8-coordinates of the unique solution to Equation \ref{cocyclecond} where $x_{9} = -1$ which we denote by $m_{8\times 1}$.  \\
\\
To obtain a certified bound on $m_{8\times 1}$, we consider the corresponding \emph{interval linear system},
\begin{equation}\label{int_linear}
\bf{\frac{\del w}{\del y} }_{8\times 8} \bf{m}_{8\times 1} = -\left(\bf{\frac{\del w}{\del x}a_{u}} \right)_{8\times 1} + \bf{\frac{\del w}{\del y}_{*\times 9}}
\end{equation}
Using INTLAB's \texttt{verifylss.(\bf{A},\bf{b})}, we obtain a certified bound $\bf{m}_{8\times 1}$ containing $m_{8\times 1}$.  Because the interval matrix $\bf{\frac{\del w}{\del y} }_{8\times 8}$ is regular, we are guaranteed a finite enclosure of this equation.  Padding the solution interval vector $\bf{m}_{8\times 1}$ at the end with the degenerate interval $[-1,-1]$ yields a certified bound on a solution Equation \ref{cocyclecond}.  In \texttt{rigcheck.m} we denote this solution by \texttt{m}.  \\
\\
With a verified enclosure of $[(a_{u},m)] \in H^{1}(M; \vv_{u})$, we may calculate a verified enclosure of the image of the map $\res_{l}\de H^{1}(M; \vv_{u}) \lra H^{1}(l; \vv_{u})$.  By Theorem \ref{thmrigid}, $z_{u}(l) = ta_{u}$ where if $t$ is non-zero, then we guarantee $\res_{l}$ is an isomorphism and $t = s_{u}^{-1}$.  By the properties of crossed homomorphisms,
\begin{equation}\label{zul}
z_{u}(l) =  \frac{\del l}{\del x} z_{u}(x) + \frac{\del l}{\del y} z_{u}(y)  = \frac{\del l}{\del x} a_{u} + \frac{\del l}{\del y} m = ta_{u} 
\end{equation}
which is contained in the corresponding interval vector $\bf{\frac{\del l}{\del x} a_{u} + \frac{\del l}{\del y} m}$.  We denote this enclosure by $\bf{z_{u}(l)}$.  The Fox-derivatives of the word $l = b^{-1}a^{-1}ba = (yx^{-1}y^{-1}x)(xy^{-1}x^{-1}y)$ are calculated below.
\begin{align*}\label{foxforl}
\frac{\del l}{\del x} &= -yx^{-1}+yx^{-1}y^{-1}+yx^{-1}y^{-1}x-yx^{-1}y^{-1}xxy^{-1}x^{-1}\\
\frac{\del l}{\del y} &=1 - yx^{-1}y^{-1}-yx^{-1}y^{-1}xxy^{-1}+yx^{-1}y^{-1}xxy^{-1}x^{-1}
\end{align*}
We take $\bf{z_{u}(l)}$ and compare it to $\bf{a_{u}}$ via the function \texttt{slopebound.m}.  Because $\bf{a_{u}}$'s 4th entry is the degenerate interval $[0,0]$, see \texttt{au} in \texttt{PRFE\textunderscore Mathematica.nb} for this calculation, we discard the 4th entry entirely.  We then divide each entry of $\bf{a_{u}}$ by $\bf{z_{u}(l)}$.  If even a single interval of the quotient manages to avoid zero entirely, we guarantee the slope of the representation $s_{u}$ is non-zero and well-defined.  In this case, each such entry interval of the quotient, provided the entry is well-defined, is guaranteed to contain the true slope of the representation, $s_{u}$.  So if in addition, even a single interval of the quotient manages to avoid $-q/p$, then by Theorem \ref{thmrigid}, we will have guaranteed the manifold $M_{p,q}$ is projectively rigid.  \\
\\
The function \texttt{slopebound.m} looks through each entry interval of the quotient of $\bf{a_{u}}$ by $\bf{z_{u}(l)}$, discards the ones that are infinite or contain zero, if any, and the intersects over all the remaining intervals.  Provided there is at least one such interval, this intersection is guaranteed to contain the slope of our representation $s_{u}$.  We denote the quantity by $\bf{s_{u}}$.  \texttt{rigcheck.m} checks that both $0$ and $-q/p$ are not in this interval, and if so, declares the manifold $M_{p,q}$ as projectively rigid.  
\end{subsection}

\begin{subsection}{$(2,3)$ Dehn-Surgery}\label{23surgery}
In this section we briefly run through an example of the above calculations with the $(2,3)$ Dehn-surgery on $M$.  One can test and verify the results here in \texttt{sandbox.m} or run \texttt{rigcheck.m} with $(2,3)$ as the input.  The SnapPy certified shapes of the ideal tetrahedra are given by
\begin{align*}
z_{1x} &= 0.567343784636165\hdots  \\
z_{1y} &= 0.442016550101567\hdots  \\
z_{2x} &= 0.322271789512312\hdots \\
z_{2y} &= 0.449431980686431\hdots
\end{align*}
The trailing $\hdots$ indicates that these calculations are correct \emph{including} the last digit.  So for example the last 5 in $z_{1x}$ is certifiably correct, however we do not know what the next digit is.  For our purposes, we used SnapPy to calculate these to 15 decimals places of precision.  We chose our $\epsilon = 10^{-15}$, so that for example
\begin{align}
z_{1x} \in&\,  [0.567343784636165-\epsilon,0.567343784636165 + \epsilon]  \nonumber\\
&= [0.567343784636164, 0.567343784636166]
\end{align}
We denote the intervals of the real and imaginary parts of $z_{1}$ and $z_{2}$ padded by $\epsilon$ by their boldface counterparts.  We calculate the interval matrix $\bf{\frac{\del w}{\del y}}$ and perform a regularity check via \texttt{isregular()} on the top left $8\times 8$ sub-matrix.  INTLAB indicates this interval matrix is indeed regular, and thus, the rank of $\frac{\del w}{\del y}$ is rank 8.  Using the method described in Section \ref{iaa} and Equation \ref{zul} we calculate $\bf{a_{u}}$ and $\bf{z_{u}}(l)$ which are 
\begin{scriptsize}
\begin{equation}\label{23au}
\bf{a_{u}} = \left(
\begin{array}{c}
\left[  -0.35148464890333,  -0.35148464890329\right] \\
\left[  -0.35148464890333,  -0.35148464890329\right] \\
\left[  -0.64851535109671,  -0.64851535109667\right] \\
\left[   0.00000000000000,   0.00000000000000\right] \\
\left[   0.83367747699777,   0.83367747699779\right] \\
\left[  -0.83367747699779,  -0.83367747699777\right] \\
\left[   0.08916928929421,   0.08916928929423\right] \\
\left[  -0.08916928929423,  -0.08916928929421\right] \\
\left[   1.00000000000000,   1.00000000000000\right] 
\end{array}
\right) \text{, } \bf{z_{u}}(l) = 
\left(
\begin{array}{c}
\left[   3.85413503155950,   3.85413549646991\right] \\
\left[   3.85413501053848,   3.85413551749158\right] \\
\left[   7.11116655098591,   7.11116658200069\right] \\
\left[  -0.00000024435903,   0.00000024435797\right] \\
\left[  -9.14152522655167,  -9.14152510260290\right] \\
\left[   9.14152481827732,   9.14152551087727\right] \\
\left[  -0.97776823755712,  -0.97776810468840\right] \\
\left[   0.97776780689252,   0.97776853535285\right] \\
\left[ -10.96530191882957, -10.96530174218687\right] 
\end{array}
\right)
\end{equation}
\end{scriptsize}
We know the true value of $z_{u}(l)$ has its 4th entry is $0$, so when we take the quotient of the entries, we ignore this particular entry.  This gives us
\begin{equation*}
\bf{a_{u}}/\bf{z_{u}(l)} = \left(
\begin{array}{c}
\left[  -0.09119676555835,  -0.09119675455759\right] \\ 
\left[  -0.09119676605575,  -0.09119675406018\right] \\ 
\left[  -0.09119676025684,  -0.09119675985908\right] \\ 
\left[  -0.09119676067623,  -0.09119675943969\right] \\ 
\left[  -0.09119676351269,  -0.09119675660323\right] \\ 
\left[  -0.09119676625435,  -0.09119675386162\right] \\ 
\left[  -0.09119679402988,  -0.09119672608613\right] \\ 
\left[  -0.09119676079252,  -0.09119675932340\right] 
\end{array} \right)
\end{equation*}
Because the true slope $s_{u}$ is contained in \emph{each} entry, we may take the highest lower bound of the above entries, and the lowest upper bound of the above entries to obtain a certified bound of the slope.  This yields
\begin{equation}\label{23slopebound}
\textbf{s}_{\bf{u}} = [  -0.09119676025684,  -0.09119675985908] 
\end{equation}
Note that the ratio $-3/2 = -1.5$ is not contained in $\textbf{s}_{\bf{u}}$, and thus by Theorem \ref{thmrigid}, we have that the $(2,3)$ Dehn filling of the figure-eight knot complement is projectively rigid.  This calculation is carried out step by step in \texttt{sandbox.m}.  These same calculations are carried out by the function $\texttt{rigcheck.m}$ whose output it a string consisting of
\begin{equation}\label{rigcheckoutput}
(p,q)\phantom{==} b_{1}\phantom{==} b_{2} \phantom{==}-q/p \phantom{==} \textbf{s}_{\bf{u}} 
\end{equation}
where $b_{1}$ and $b_{2}$ are booleans.  $b_{1}$ is true if $\frac{\del w}{\del y}$ is certifiably rank 8 via the regularity check described in Section \ref{iaa} and false otherwise.  $b_{2}$ is true if both $-q/p$ and $0$ are outside $ \textbf{s}_{\bf{u}}$ and false otherwise.  \\
\\
The program \texttt{main.m} performs \texttt{rigcheck.m} for all of the 2199 integer pairs $(p,q)$ that are relatively prime non-exceptional slopes in the range of $1\leq p, q \leq 60$.  The results of each \texttt{rigcheck.m} is put into the \texttt{.txt}-file \texttt{rigresults.txt}.  The results of this program which took approximately 24 hours to run indicate that all such Dehn fillings are projectively rigid.  The only one that could not verify projective rigidity was the `trivial' one $(1,1)$ which is exceptional and artificially given `bad' values to ensure that the function \texttt{rigcheck.m} is performing properly.  \\
\\
It is worth mentioning the calculation for the $(2,3)$ surgery is also informally conducted in \texttt{PRFE\textunderscore Mathematica.nb}.  Towards the end of the notebook, one may change the variables \texttt{dehnp} and \texttt{dehnp} from $(2,3)$ to any pair of the relatively prime $p$ and $q$ we've addressed in this paper to see calculations done at a much quicker, albeit informal, manner.  \\
\\
We conclude by remarking that the combined results of this paper along with Heusener and Porti \cite{Heusener2011Infinitesimal} suggest that it is possible that all such Dehn-surgeries on the figure eight knot complement resulting in an honest manifold, i.e. when $(p,q)$ are relatively prime and non-exceptional, are infinitesimally projectively rigid.  If one could calculate an effective bound on the set $0 \in U$ for which they guarantee infinitesimal projective rigidity, then one could possible check the remaining finite cases with the algorithm provided in this paper.
\end{subsection}
\end{section}

\begin{section}{Bibliography}
\bibliographystyle{plain}
\bibliography{refs}

\begin{thebibliography}{10}

\bibitem{Agol2010Bounds}
Ian Agol.
\newblock Bounds on exceptional dehn filling ii.
\newblock {\em Geometry \&; Topology}, 14(4):1921--1940, 8 2010.

\bibitem{Ballas2018Convex}
Samuel Ballas, Jeffrey Danciger, and Gye-Seon Lee.
\newblock Convex projective structures on nonhyperbolic three-manifolds.
\newblock {\em Geometry \&; Topology}, 22(3):1593--1646, 3 2018.

\bibitem{Ballas2014Deformations}
Samuel~A Ballas.
\newblock Deformations of noncompact projective manifolds.
\newblock {\em Algebraic \&; Geometric Topology}, 14(5):2595--2625, 11 2014.

\bibitem{Ballas2015Finite}
Samuel~A. Ballas.
\newblock Finite volume properly convex deformations of the figure-eight knot.
\newblock {\em Geometriae Dedicata}, 178(1):49--73, 1 2015.

\bibitem{BoileauHeusenerPorti}
Michel Boileau, Michael Heusener, and Joan Porti.
\newblock Geometrization of 3-orbifolds of cyclic type.
\newblock {\em Ast\`{e}risque}, 272:179--196, 2001.

\bibitem{Brown1982Cohomology}
Kenneth~S. Brown.
\newblock {\em Cohomology of Groups}.
\newblock Springer New York, 1982.

\bibitem{Cooper2006Computing}
Daryl Cooper, Darren Long, and Morwen Thistlethwaite.
\newblock Computing varieties of representations of hyperbolic 3-manifolds into
  sl(4, r).
\newblock {\em Experimental Mathematics}, 15(3):291--305, 1 2006.

\bibitem{SnapPy}
Marc Culler, Nathan~M. Dunfield, Matthias Goerner, and Jeffrey~R. Weeks.
\newblock Snap{P}y, a computer program for studying the geometry and topology
  of $3$-manifolds.
\newblock Available at \url{http://snappy.computop.org} (DD/MM/YYYY).

\bibitem{SNAPKrawczyk}
Marc Culler, Nathan~M. Dunfield, Matthias Goerner, and Jeffrey~R. Weeks.
\newblock Internals of verified computations, 2024.
\newblock \url{https://snappy.computop.org/verify_internals.html} [Accessed:
  (July 2024)].

\bibitem{SNAPPYGluingEQs}
Marc Culler, Nathan~M. Dunfield, Matthias Goerner, and Jeffrey~R. Weeks.
\newblock Manifold: the main class, 2024.
\newblock \url{https://snappy.computop.org/manifold.html} [Accessed: (July
  2024)].

\bibitem{Daly}
Charles Daly.
\newblock Prfe files, 2024.
\newblock \url{https://www.math.brown.edu/cdaly2/research.html} [Accessed:
  (July 2024)].

\bibitem{Goldman2020Parallelism}
William Goldman.
\newblock Parallelism on lie groups and fox’s free differential calculus,
  2020.

\bibitem{Hatcher2002Algebraic}
Allen Hatcher.
\newblock {\em Algebraic Topology}.
\newblock Cambridge University Press, 2002.

\bibitem{Heusener2011Infinitesimal}
Michael Heusener and Joan Porti.
\newblock Infinitesimal projective rigidity under dehn filling.
\newblock {\em Geometry \&; Topology}, 15(4):2017--2071, 10 2011.

\bibitem{Hodgson}
Craig Hodgson.
\newblock {\em Degeneration and Regeneration of Geometric Structures on
  Three-Manifolds}.
\newblock Phd thesis, Princeton, Princeton, New Jersey, June 1986.
\newblock Available at
  \url{http://homepages.warwick.ac.uk/~masgar/Maths/hodgson.pdf}.

\bibitem{MATLAB}
The~MathWorks Inc.
\newblock Matlab version: 9.13.0 (r2022b), 2022.

\bibitem{Mathematica}
Wolfram~Research{,} Inc.
\newblock Mathematica, {V}ersion 14.0.
\newblock Champaign, IL, 2024.

\bibitem{Johnson1987Deformation}
Dennis Johnson and John~J. Millson.
\newblock Deformation spaces associated to compact hyperbolic manifolds.
\newblock In {\em Discrete Groups in Geometry and Analysis}, pages 48--106.
  Birkhäuser Boston, 1987.

\bibitem{Kapovich2010Hyperbolic}
Michael Kapovich.
\newblock {\em Hyperbolic Manifolds and Discrete Groups}.
\newblock Birkhäuser Boston, 2010.

\bibitem{Kapovich1996On}
Michael Kapovich and John~J Millson.
\newblock On the deformation theory of representations of fundamental groups of
  compact hyperbolic 3-manifolds.
\newblock {\em Topology}, 35(4):1085--1106, 10 1996.

\bibitem{Knapp1996Lie}
Anthony~W. Knapp.
\newblock {\em Lie Groups Beyond an Introduction}.
\newblock Birkhäuser Boston, 1996.

\bibitem{Lackenby2012maximal}
Marc Lackenby and Robert Meyerhoff.
\newblock The maximal number of exceptional dehn surgeries.
\newblock {\em Inventiones mathematicae}, 191(2):341--382, 4 2012.

\bibitem{Levy1997Flavors}
Silvio Levy.
\newblock {\em Flavors of Geometry}.
\newblock Cambridge University Press, 9 1997.

\bibitem{claraloh}
Clara L\'{o}h.
\newblock Group cohomology.
\newblock
  \url{https://loeh.app.uni-regensburg.de/teaching/grouphom_ss19/lecture_notes.pdf},
  2019.

\bibitem{Moore2009Introduction}
Ramon~E. Moore, R.~Baker Kearfott, and Michael~J. Cloud.
\newblock {\em Introduction to Interval Analysis}.
\newblock SIAM, 4 2009.

\bibitem{Purcell2020Hyperbolic}
Jessica~S. Purcell.
\newblock {\em Hyperbolic Knot Theory}.
\newblock American Mathematical Soc., 10 2020.

\bibitem{Ru99a}
{S.M.} Rump.
\newblock {INTLAB - INTerval LABoratory}.
\newblock In Tibor Csendes, editor, {\em {Developments~in~Reliable Computing}},
  pages 77--104. Kluwer Academic Publishers, Dordrecht, 1999.
\newblock \url{http://www.tuhh.de/ti3/rump/}.

\bibitem{Shary1995On}
Sergey~P. Shary.
\newblock On optimal solution of interval linear equations.
\newblock {\em SIAM Journal on Numerical Analysis}, 32(2):610--630, 4 1995.

\bibitem{Steenrod1943Homology}
N.~E. Steenrod.
\newblock Homology with local coefficients.
\newblock {\em The Annals of Mathematics}, 44(4):610, 10 1943.

\bibitem{Thurston2022Geometry}
William~P. Thurston.
\newblock {\em The Geometry and Topology of Three-Manifolds}.
\newblock American Mathematical Society, 7 2022.

\bibitem{Weil1964Remarks}
Andre Weil.
\newblock Remarks on the cohomology of groups.
\newblock {\em The Annals of Mathematics}, 80(1):149, 7 1964.

\end{thebibliography}
\end{section}
\end{document}